\newtheorem{Def}{Definition}[section]
\newtheorem{Th}{Theorem}[section]
\newtheorem{Ex}{Example}[section]
\newtheorem{Lemma}{Lemma}[section]
\newtheorem{Prop}{Proposition}[section]
\newtheorem{Cor}{Corollary}[section]
\newtheorem{Rem}{Remark}[section]
\newtheorem{Prob}{Problem}[section]
\DeclareMathOperator{\Int}{Int}
\DeclareMathOperator{\maxfin}{\sf maxfin}
\DeclareMathOperator{\PR}{\sf PR}
\DeclareMathOperator{\aL}{\sf aL}
\DeclareMathOperator{\aM}{\sf aM}
\DeclareMathOperator{\aS}{\sf aS}
\DeclareMathOperator{\aH}{\sf aH}
\DeclareMathOperator{\wL}{\sf wL}
\DeclareMathOperator{\wM}{\sf wM}
\DeclareMathOperator{\wS}{\sf wS}
\DeclareMathOperator{\wH}{\sf wH}
\DeclareMathOperator{\wSk}{\sf wS_k}
\DeclareMathOperator{\wHk}{\sf wH_k}
\DeclareMathOperator{\Uf}{U_{fin}}
\DeclareMathOperator{\S1}{S_1}
\DeclareMathOperator{\Sf}{S_{fin}}
\DeclareMathOperator{\n}{\sf N}
\begin{document}
\title[On certain weaker forms of the Scheepers property]{On certain weaker forms of the Scheepers property}

\author[ D. Chandra, N. Alam ]{ Debraj Chandra$^*$, Nur Alam$^*$ }
\newcommand{\acr}{\newline\indent}
\address{\llap{*\,}Department of Mathematics, University of Gour Banga, Malda-732103, West Bengal, India}
\email{debrajchandra1986@gmail.com, nurrejwana@gmail.com}

\thanks{ The second author
is thankful to University Grants Commission (UGC), New Delhi-110002, India for granting UGC-NET Junior Research Fellowship (1173/(CSIR-UGC NET JUNE 2017)) during the tenure of which this work was done.}

\subjclass{Primary: 54D20; Secondary: 54B05, 54C10, 54D99}

\maketitle

\begin{abstract}
We introduce the weaker forms of the Scheepers property, namely almost Scheepers ($\aS$), weakly Scheepers in the sense of Sakai ($\wS$) and weakly Scheepers in the sense of Ko\v{c}inac ($\wSk$).
We explore many topological properties of the weaker forms of the Scheepers property and present few illustrative examples to make distinction between these spaces. Certain situations are considered when all the weaker forms are equivalent. We also make investigations on the weak variations as considered in this paper concerning cardinalities.
In particular we observe that

\begin{enumerate}
  \item If every finite power of a space $X$ is $\aM$ (respectively, $\wM$), then $X$ is $\aS$ (respectively, $\wS$).
  \item Every almost Lindel\"{o}f space of cardinality less than $\mathfrak{d}$ is $\aS$.
  \item Let $X$ be Lindel\"{o}f and $\kappa<\mathfrak d$. If $X$ is a union of $\kappa$ many $\aH$ (respectively, $\wH$, $\wHk$) spaces, then $X$ is $\aS$ (respectively, $\wS$, $\wSk$).
  \item The Alexandroff duplicate $AD(X)$ of a space $X$ has the Scheepers property if and only if $AD(X)$ has the $\wSk$ property.
  \item If $AD(X)$ is $\aS$ (respectively, $\wS$), then $X$ is also $\aS$ (respectively, $\wS$).
\end{enumerate}

Besides, few observations on productively $\aS$, productively $\wS$ and productively $\wSk$ spaces are presented.
Some open problems are also given.

\end{abstract}

\noindent{\bf\keywordsname{}:} {Scheepers, almost Scheepers, weakly Scheepers, almost Alster, weakly Alster, productively almost Scheepers, productively weakly Scheepers.}

\section{Introduction}
The study of selection principles in set-theoretic topology is well known. A lot of research has been carried out investigating weaker forms of the Menger \cite{coc1} and Hurewicz \cite{coc1} properties, namely almost Menger (Hurewicz) and weakly Menger (Hurewicz) properties, for detailed information see \cite{PRTS,AHS,WMP,SMR-II,MTCP,PRS,WHP,survey1} and references therein. In this article we introduce certain weaker forms of the Scheepers property \cite{coc1,coc2}, namely almost Scheepers ($\aS$), weakly Scheepers in the sense of Sakai ($\wS$) and weakly Scheepers in the sense of Ko\v{c}inac ($\wSk$). The purpose of this study is to observe the relationship between the Scheepers property and its weaker variations, and also investigate the behaviour of these new relatives. The interrelationships between the notions considered here are outlined into implication diagrams (Figure~\ref{dig1} and Figure~\ref{dig2}).

The following is a summarization of what is done in this paper.
\begin{enumerate}[wide=0pt,label={\upshape(\arabic*)},leftmargin=*]
  \item Every paracompact (and also regular) $\aS$ space is Scheepers. But a hypocompact (and hence a paracompact) as well as a regular $\wSk$ ($\wS$) space need not be Scheepers.
  \item If every finite power of a space $X$ is $\aM$ (respectively, $\wM$), then $X$ is $\aS$ (respectively, $\wS$).
  \item Every almost Alster (respectively, weakly Alster) space is $\aS$ (respectively, $\wS$).
  \item Every almost Lindel\"{o}f space with cardinality less than $\mathfrak{d}$ is $\aS$.
  \item Let $X$ be Lindel\"{o}f and $\kappa<\mathfrak d$. If $X$ is a union of $\kappa$ many $\aH$ (respectively, $\wH$, $\wHk$) spaces, then $X$ is $\aS$ (respectively, $\wS$, $\wSk$).
  \item The Alexandroff duplicate $AD(X)$ of a space $X$ has the Scheepers property if and only if $AD(X)$ has the $\wSk$ property.
  \item If $AD(X)$ is $\aS$ (respectively, $\wS$), then $X$ is also $\aS$ (respectively, $\wS$).
  \item Every $H$-closed space is productively $\aS$ (productively $\wS$, productively $\wSk$).
  \item If a space $X$ satisfies $\S1(\mathcal{G}_K,\mathcal{G}_{\overline{\Gamma}})$ (respectively, $\S1(\mathcal{G}_K,\mathcal{G}_{D_\Gamma})$, $\S1(\mathcal{G}_K,\mathcal{G}_{\Gamma_D})$), then $X$ is productively $\aS$ (respectively, productively $\wS$, productively $\wSk$).
\end{enumerate}

The paper is organized as follows. In Section 3, it is observed that the Scheepers property is distinguishable from its weaker forms. Contrasting characterizations of these weaker variations in terms of regular-open sets as well as neighbourhood assignments are obtained. Certain situations are considered when all the weaker forms are equivalent. In \cite[Proposition 2.3]{MTCP}, the authors answered the question \cite[Question 13]{WMP} that under what conditions the $\aM$ and $\wM$ properties are equivalent. We improve the result \cite[Proposition 2.3]{MTCP} by showing that in the context of a $P$-space almost all the weak variations are identical (Theorem~\ref{T44} and Corollary~\ref{C29}). Later in this section, we present some observations on the weak variations (as considered in this paper) concerning critical cardinalities. We improve the result \cite[Theorem 2.5]{QM} by showing that if a Lindel\"{o}f space $X$ is written as a union of $\kappa$ ($<\mathfrak{d}$) many $H$-closed subspaces, then $X$ is $\aS$ (Corollary~\ref{C16}). In Section 4, preservation like properties of the new notions are investigated. We consider and study the productively properties, viz. productively $\aS$, productively $\wS$ and productively $\wSk$ spaces. In the later part of this section, we also study the behaviour of these notions under certain mappings (mainly weaker versions of continuous mapping). In Section 5, we explore few possibilities for further lines of investigation. In the final section, some open problems are posted.

\section{Preliminaries}
Throughout the paper $(X,\tau)$ stands for a topological space. For undefined notions and terminologies, see \cite{Engelking}.

Let $\mathcal{A}$ and $\mathcal{B}$ be families of subsets of a space $X$. Following \cite{coc1,coc2}, we define

\noindent $\S1(\mathcal{A},\mathcal{B})$: For each sequence $(\mathcal{U}_n)$ of elements of $\mathcal{A}$ there exists a sequence $(V_n)$ such that for each $n$ $V_n\in\mathcal{U}_n$ and $\{V_n : n\in\mathbb{N}\}\in\mathcal{B}$.\\

\noindent $\Sf(\mathcal{A},\mathcal{B})$: For each sequence $(\mathcal{U}_n)$ of elements of $\mathcal{A}$ there exists a sequence $(\mathcal{V}_n)$ such that for each $n$ $\mathcal{V}_n$ is a finite subset of $\mathcal{U}_n$ and $\cup_{n\in\mathbb{N}}\mathcal{V}_n\in\mathcal{B}$.\\

\noindent $\Uf(\mathcal{A},\mathcal{B})$: For each sequence $(\mathcal{U}_n)$ of elements of $\mathcal{A}$ there exists a sequence $(\mathcal{V}_n)$ such that for each $n$ $\mathcal{V}_n$ is a finite subset of $\mathcal{U}_n$ and $\{\cup\mathcal{V}_n : n\in\mathbb{N}\}\in\mathcal{B}$ or there is some $n$ such that $\cup\mathcal{V}_n=X$.

Let $\mathcal O$ denote the collection of all open covers of $X$. An open cover $\mathcal{U}$ of $X$ is said to be a $\gamma$-cover \cite{coc1,coc2} if $\mathcal{U}$ is infinite and for each $x\in X$, the set $\{U\in\mathcal{U} : x\notin U\}$ is finite. An open cover $\mathcal{U}$ of $X$ is said to be an $\omega$-cover \cite{coc1,coc2} if $X$ not in $\mathcal{U}$ and for each finite subset $F$ of $X$ there is a set $U\in\mathcal{U}$ such that $F\subseteq U$. An open cover $\mathcal{U}$ of $X$ is said to be a large cover \cite{coc1,coc2} if for each $x\in X$, the set $\{U\in\mathcal{U} : x\in U\}$ is infinite.
We use the symbol $\Gamma$, $\Omega$ and $\Lambda$ to denote the collection of all $\gamma$-covers, $\omega$-covers and large covers of $X$ respectively. Note that $\Gamma\subseteq\Omega\subseteq\Lambda\subseteq\mathcal{O}$.
An open cover $\mathcal{U}$ of $X$ is said to be weakly groupable \cite{cocVIII} if $X$ can be expressed as a countable union of finite, pairwise disjoint subfamilies $\mathcal{U}_n$, $n\in\mathbb{N}$, such that for each finite set $F\subseteq X$ we have $F\subseteq\cup\mathcal{U}_n$ for some $n$. The collection of all weakly groupable covers of $X$ is denoted by $\mathcal{O}^{wgp}$. A space $X$ is said to have the Menger (respectively, Scheepers, Hurewicz) property if $X$ satisfies the selection principle $\Sf(\mathcal{O},\mathcal{O})$ (respectively, $\Uf(\mathcal{O},\Omega)$, $\Uf(\mathcal{O},\Gamma)$) \cite{coc1,coc2}.

We now consider the following classes of covers (for similar types of covers, see \cite{NSM}).
\begin{enumerate}[leftmargin=*]
\item[$\overline{\mathcal{O}}$:] The family of all sets $\mathcal{U}$ of open subsets of $X$ such that $\{\overline{U} : U\in\mathcal{U}\}$ covers $X$.

\item[$\overline{\Omega}$:] The family of all sets $\mathcal{U}\in\overline{\mathcal{O}}$ such that each finite set $F\subseteq X$ is contained in $\overline{U}$ for some $U\in\mathcal{U}$.

\item[$\overline{\Gamma}$:] The family of all sets $\mathcal{U}\in\overline{\mathcal{O}}$ such that for each $x\in X$, the set $\{U\in\mathcal{U} : x\notin\overline{U}\}$ is finite.

\item[$\overline{\Lambda}$:] The family of all sets $\mathcal{U}\in\overline{\mathcal{O}}$ such that for each $x\in X$, the set $\{U\in\mathcal{U} : x\in\overline{U}\}$ is infinite.

\item[$\overline{\mathcal{O}^{wgp}}$:] The family of all sets $\mathcal{U}\in\overline{\mathcal{O}}$ such that $\mathcal{U}$ can be expressed as a countable union of finite, pairwise disjoint subfamilies $\mathcal{U}_n$, $n\in\mathbb{N}$, such that for each finite set $F\subseteq X$ there exists a $n$ such that $F\subseteq\overline{\cup\mathcal{U}_n}$.

\item[$\overline{\Lambda^{wgp}}$:] The family of all sets $\mathcal{U}\in\overline{\Lambda}$ such that $\mathcal{U}\in\overline{\mathcal{O}^{wgp}}$.

\item[$\mathcal{O}_D$:] The family of all sets $\mathcal{U}$ of open subsets of $X$ such that $\cup\mathcal{U}$ is dense in $X$.

\item[$\Omega_D$:] The family of all sets $\mathcal{U}\in\mathcal{O}_D$ such that for each finite collection $\mathcal{F}$ of nonempty open sets of $X$ there exists a $U\in\mathcal{U}$ such that $U\cap V\neq\emptyset$ for all $V\in\mathcal{F}$.

\item[$\Gamma_D$:] The family of all sets $\mathcal{U}\in\mathcal{O}_D$ such that for each nonempty open set $U\subseteq X$, the set $\{V\in\mathcal{U} : U\cap V=\emptyset\}$ is finite.

\item[$\Lambda_D$:] The family of all sets $\mathcal{U}\in\mathcal{O}_D$ such that for each nonempty open set $U\subseteq X$, the set $\{V\in\mathcal{U} : U\cap V\neq\emptyset\}$ is infinite.

\item[${\mathcal{O}^{wgp}}_D$:] The family of all sets $\mathcal{U}\in\mathcal{O}_D$ such that $\mathcal{U}$ can be expressed as a countable union of finite, pairwise disjoint subfamilies $\mathcal{U}_n$, $n\in\mathbb{N}$, such that for each finite collection $\mathcal{F}$ of nonempty open sets of $X$ there exists a $n$ such that $V\cap(\cup\mathcal{U}_n)\neq\emptyset$ for all $V\in\mathcal{F}$.

\item[${\Lambda^{wgp}}_D$:] The family of all sets $\mathcal{U}\in\Lambda_D$ such that $\mathcal{U}\in{\mathcal{O}^{wgp}}_D$.

\item[$\Omega^D$:] The family of all sets $\mathcal{U}\in\Omega_D$ such that for each $\mathcal{U}$ there exists a dense set $Y\subseteq X$ such that each finite set $F\subseteq Y$ is contained in $U$ for some $U\in\mathcal{U}$.

\item[$\Gamma^D$:] The family of all sets $\mathcal{U}\in\Gamma_D$ such that for each $\mathcal{U}$ there exists a dense set $Y\subseteq X$ such that for each $x\in Y$, the set $\{U\in\mathcal{U} : x\notin U\}$ is finite.

\item[$\Lambda^D$:] The family of all sets $\mathcal{U}\in\Lambda_D$ such that for each $\mathcal{U}$ there exists a dense set $Y\subseteq X$ such that for each $x\in Y$, the set $\{U\in\mathcal{U} : x\in U\}$ is infinite.

\item[${\mathcal{O}^{wgp}}^D$:] The family of all sets $\mathcal{U}\in{\mathcal{O}^{wgp}}_D$ such that for each $\mathcal{U}$ there exists a dense set $Y\subseteq X$ and $\mathcal{U}$ can be expressed as a countable union of finite, pairwise disjoint subfamilies $\mathcal{U}_n$, $n\in\mathbb{N}$, such that each finite set $F\subseteq Y$ is contained in $\cup\mathcal{U}_n$ for some $n$.

\item[${\Lambda^{wgp}}^D$:] The family of all sets $\mathcal{U}\in\Lambda^D$ such that $\mathcal{U}\in{\mathcal{O}^{wgp}}^D$.
\end{enumerate}

All the considered covers are assumed to be infinite. Observe that
\begin{multicols}{2}
\begin{enumerate}[label={\upshape(\arabic*)}]
  \item $\overline{\Gamma}\subseteq\overline{\Omega}\subseteq\overline{\Lambda}
      \subseteq\overline{\mathcal{O}}$
  \item $\Gamma_D\subseteq\Omega_D\subseteq\Lambda_D\subseteq\mathcal{O}_D$
  \item $\Gamma^D\subseteq\Omega^D\subseteq\Lambda^D\subseteq\mathcal{O}_D$
  \item $\overline{\mathcal{O}}\subseteq\mathcal{O}_D$
  \item $\overline{\Gamma}\subseteq\Gamma_D\subseteq\Gamma^D$
  \item $\overline{\Omega}\subseteq\Omega_D\subseteq\Omega^D$
  \item $\overline{\Lambda}\subseteq\Lambda_D\subseteq\Lambda^D$.
\end{enumerate}
\end{multicols}

Also note that every countable member of $\overline{\Omega}$ (respectively, $\Omega_D$, $\Omega^D$) is a member of $\overline{\mathcal{O}^{wgp}}$ (respectively, ${\mathcal{O}^{wgp}}_D$, ${\mathcal{O}^{wgp}}^D$).

A space $X$ is said to be almost Lindel\"{o}f (in short, $\aL$) (respectively, weakly Lindel\"{o}f (in short, $\wL$)) if for every open cover $\mathcal{U}$ of $X$ there exists a countable subset $\mathcal{V}$ of $\mathcal{U}$ such that $\mathcal{V}\in\overline{\mathcal{O}}$ (respectively, $\mathcal{V}\in\mathcal{O}_D$) (see \cite{ALWL,MTCP}).
A space $X$ satisfying the selection principle $\Sf(\mathcal{O},\overline{\mathcal{O}})$ (respectively, $\Uf(\mathcal{O},\overline{\Gamma})$) is called almost Menger \cite{SMR-II} (in short, $\aM$) (respectively, almost Hurewicz \cite{AHS,SHS} (in short, $\aH$)).
A space $X$ is said to be weakly Menger \cite{PRS,WMP} (in short, $\wM$) (respectively, weakly Hurewicz in the sense of Sakai \cite{WHP} (in short, $\wH$)) if $X$ satisfies the selection principle $\Sf(\mathcal{O},\mathcal{O}_D)$ (respectively, $\Uf(\mathcal{O},\Gamma_D)$).
A space $X$ is said to be weakly Hurewicz in the sense of Ko\v{c}inac \cite{PRTS} (in short, $\wHk$) if $X$ satisfies the selection principle $\Uf(\mathcal{O},\Gamma^D)$.

The following terminologies will be used throughout our study (for similar such notions, see \cite{WCPSP,Alster}).
\begin{enumerate}[leftmargin=*]
\item[$\mathcal{G}$:] The family of all covers $\mathcal{U}$ of the space $X$ for which each element of $\mathcal{U}$ is a $G_\delta$ set.

\item[$\mathcal{G}_K$:] The family of all sets $\mathcal{U}$ where $X$ is not in $\mathcal{U}$, each element of $\mathcal{U}$ is a $G_\delta$ set, and for each compact set $C\subseteq X$ there is a $U\in\mathcal{U}$ such that $C\subseteq U$.

\item[$\mathcal{G}_\Omega$:] The family of all covers $\mathcal{U}\in\mathcal{G}$ such that for each finite set $F\subseteq X$ there is a $U\in\mathcal{U}$ such that $F\subseteq U$.

\item[$\mathcal{G}_\Gamma$:] The family of all covers $\mathcal{U}\in\mathcal{G}$ which are infinite and each infinite subset of $\mathcal{U}$ is a cover of $X$.

\item[$\mathcal{G}_{\overline{\Gamma}}$:] The family of all covers $\mathcal{U}\in\mathcal{G}$ which are infinite and for each $x\in X$, the set $\{U\in\mathcal{U} : x\notin\overline{U}\}$ is finite.

\item[$\mathcal{G}_D$:] The family of all sets $\mathcal{U}$ where each element of $\mathcal{U}$ is a $G_\delta$ set and $\cup\mathcal{U}$ is dense in $X$.

\item[$\mathcal{G}_{D_\Gamma}$:] The family of all sets $\mathcal{U}$ where each element of $\mathcal{U}$ is a $G_\delta$ set and for each nonempty open set $U\subseteq X$, the set $\{V\in\mathcal{U} : U\cap V=\emptyset\}$ is finite.

\item[$\mathcal{G}_{\Gamma_D}$:] The family of all sets $\mathcal{U}$ where each element of $\mathcal{U}$ is a $G_\delta$ set and for each $\mathcal{U}$ there exists a dense set $Y\subseteq X$ such that for each $x\in Y$, the set $\{U\in\mathcal{U} : x\notin U\}$ is finite.

\item[$\overline{\mathcal{G}}$:] The family of all sets $\mathcal{U}$ such that every $U\in\mathcal{U}$ is a $G_\delta$ set and $\{\overline{U} : U\in\mathcal{U}\}$ covers $X$.

\item[$\overline{\mathcal{G}_\Omega}$:] The family of all sets $\mathcal{U}\in\overline{\mathcal{G}}$ such that for each finite set $F\subseteq X$ there exists a $U\in\mathcal{U}$ such that $F\subseteq U$.
\end{enumerate}

A space $X$ is said to be Alster \cite{Alster} if each member $\mathcal{U}$ of $\mathcal{G}_K$ has a countable subset $\mathcal{V}$ that covers $X$ (or, equivalently if $X$ satisfies the selection principle $\S1(\mathcal{G}_K,\mathcal{G})$).
A space $X$ is said to be almost Alster \cite{MTCP} if each member $\mathcal{U}$ of $\mathcal{G}_K$ has a countable subset $\mathcal{V}$ such that $\{\overline{V} : V\in\mathcal{V}\}$ covers $X$ (or, equivalently if $X$ satisfies the selection principle $\S1(\mathcal{G}_K,\overline{\mathcal{G}})$).
A space $X$ is said to be weakly Alster \cite{WCPSP} if each member $\mathcal{U}$ of $\mathcal{G}_K$ has a countable subset $\mathcal{V}$ such that $\cup\mathcal{V}$ is dense in $X$ (or, equivalently if $X$ satisfies the selection principle $\S1(\mathcal{G}_K,\mathcal{G}_D)$).

Consider the Baire space $\mathbb{N}^\mathbb{N}$. A natural pre-order $\leq^*$ on $\mathbb{N}^\mathbb{N}$ is defined by $f\leq^*g$ if and only if $f(n)\leq g(n)$ for all but finitely many $n$. A subset $D$ of $\mathbb{N}^\mathbb{N}$ is said to be dominating if for each $g\in\mathbb{N}^\mathbb{N}$ there exists a $f\in D$ such that $g\leq^* f$. A subset $A$ of $\mathbb{N}^\mathbb{N}$ is said to be bounded if there is a $g\in\mathbb{N}^\mathbb{N}$ such that $f\leq^*g$ for all $f\in A$. Let $\mathfrak{d}$ be the minimum cardinality of a dominating subset of $\mathbb{N}^\mathbb{N}$, $\mathfrak{b}$ be the minimum cardinality of an unbounded subset of $\mathbb{N}^\mathbb{N}$ and $\mathfrak{c}$ be the cardinality of the continuum.

A subset $A$ of a space $X$ is said to be regular-closed (respectively, regular-open) if $\overline{\Int(A)}=A$ (respectively, $\Int(\overline{A})=A$). Every clopen set is regular-closed as well as regular-open.
The Alexandroff duplicate \cite{AD,Engelking} $AD(X)$ of a space $X$ is defined as follows. $AD(X)=X\times\{0,1\}$; each point of $X\times\{1\}$ is isolated and a basic neighbourhood of $(x,0)\in X\times\{0\}$ is a set of the form $(U\times\{0\})\cup((U\times\{1\})\setminus\{(x,1)\})$, where $U$ is a neighbourhood of $x$ in $X$.
For a space $X$, $\PR(X)$ denotes the space of all nonempty finite subsets of $X$ with the Pixley-Roy topology \cite{PRS}. The collection $\{[A,U] : A\in \PR(X),\; U\;\text{open in}\; X\}$ is a base for the Pixley-Roy topology, where $[A,U]=\{B\in \PR(X) : A\subseteq B\subseteq U\}$ for each $A\in \PR(X)$ and each open set $U$ in $X$.
A space $X$ is said to be cosmic if it has a countable network. A space $X$ satisfies the countable chain condition (in short, CCC) if every family of disjoint nonempty open subsets of $X$ is countable. A space satisfying the CCC is called a CCC space. For a space $X$, $e(X)=\sup\{|Y| : Y\;\text{is a closed and discrete subspace of}\; X\}$ is said to be the extent of $X$. For any two spaces $X$ and $Y$, $X\oplus Y$ denote the topological sum of $X$ and $Y$.
\begin{Lemma}
For a space $X$ the following assertions hold.
\begin{enumerate}[wide=0pt,label={\upshape(\arabic*)},
leftmargin=*,ref={\theLemma(\arabic*)}]
  \item\label{L2} $\Sf(\Gamma,\overline{\Lambda})=\Sf(\Omega,\overline{\Lambda})$
  \item\label{L6} $\Sf(\Gamma,\Lambda_D)=\Sf(\Omega,\Lambda_D)$
  \item\label{L8} $\Sf(\Gamma,\Lambda^D)=\Sf(\Omega,\Lambda^D)$.
\end{enumerate}
\end{Lemma}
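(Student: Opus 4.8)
The plan is to prove each of the three equalities as a pair of inclusions, only one of which carries content. Since $\Gamma\subseteq\Omega$, every sequence of $\gamma$-covers is in particular a sequence of $\omega$-covers, so $\Sf(\Omega,\mathcal{B})\Rightarrow\Sf(\Gamma,\mathcal{B})$ holds trivially for each target family $\mathcal{B}\in\{\overline{\Lambda},\Lambda_D,\Lambda^D\}$. The whole work lies in the reverse implication $\Sf(\Gamma,\mathcal{B})\Rightarrow\Sf(\Omega,\mathcal{B})$, and I would handle all three cases by one uniform argument, deferring the small case-specific verifications to the end.

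For the reverse implication, let $(\mathcal{U}_n)$ be a sequence of $\omega$-covers and fix an enumeration $\mathcal{U}_n=\{U_{n,m}:m\in\mathbb{N}\}$. For each $n,m$ put $W_{n,m}=U_{n,1}\cup\cdots\cup U_{n,m}$ and set $\mathcal{G}_n=\{W_{n,m}:m\in\mathbb{N}\}$. The sets $W_{n,m}$ increase with $m$ and their union is $X$, so each point lies in all but finitely many $W_{n,m}$; hence $\mathcal{G}_n$ is a $\gamma$-cover (the degenerate case in which finitely many members of $\mathcal{U}_n$ already cover $X$ is routine and can be absorbed by enlarging the selections from the remaining covers). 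Applying the hypothesis $\Sf(\Gamma,\mathcal{B})$ to $(\mathcal{G}_n)$ yields finite families $\mathcal{H}_n\subseteq\mathcal{G}_n$ with $\bigcup_n\mathcal{H}_n\in\mathcal{B}$. Finally I would pull the selection back: replacing each $W_{n,m}\in\mathcal{H}_n$ by the finitely many $U_{n,j}$ that compose it produces a finite set $\mathcal{V}_n\subseteq\mathcal{U}_n$, and these are the required selections.

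It remains to check $\bigcup_n\mathcal{V}_n\in\mathcal{B}$, which is the crux. As point sets $\bigcup\mathcal{V}_n=\bigcup\mathcal{H}_n$ for each $n$, so the \emph{covering} part of $\mathcal{B}$ (closures covering $X$ for $\overline{\Lambda}$, density of the union for $\Lambda_D$ and $\Lambda^D$) transfers verbatim, using $\overline{A\cup B}=\overline{A}\cup\overline{B}$ in the $\overline{\Lambda}$ case. The delicate point is the \emph{infinitely often} clause. For $\Lambda_D$, given a nonempty open $O$, infinitely many members of $\bigcup_n\mathcal{H}_n$ meet $O$; since each $\mathcal{H}_n$ is finite these come from infinitely many distinct indices $n$, and for each such $n$ some constituent $U_{n,j}\in\mathcal{V}_n$ already meets $O$, so $O$ meets infinitely many members of $\bigcup_n\mathcal{V}_n$. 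The same counting gives the clause for $\overline{\Lambda}$ (where $x\in\overline{W_{n,m}}$ forces $x\in\overline{U_{n,j}}$ for some $j$) and, together with the unchanged dense witness, for $\Lambda^D$. I expect this transfer of the \emph{infinitely many members} condition through the finite-union operation to be the main obstacle; the mechanism that makes it work is precisely the finiteness of each $\mathcal{H}_n$, which forces any infinite witness to be spread across infinitely many of the covers $\mathcal{U}_n$.
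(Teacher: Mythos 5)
Your construction coincides with the paper's---replace each $\omega$-cover by the $\gamma$-cover of increasing finite unions, select, and unpack the selected unions into their constituents---but there is a genuine gap in your final ``infinitely often'' step, and it is precisely the point the paper handles with a preliminary reduction that you omit. You argue: infinitely many members of $\bigcup_n\mathcal{H}_n$ witness the condition at $x$ (or at $O$); since each $\mathcal{H}_n$ is finite these spread over infinitely many indices $n$; each such $n$ contributes a constituent in $\mathcal{V}_n$ witnessing the condition; hence infinitely many members of $\bigcup_n\mathcal{V}_n$ do. The last inference fails, because $\bigcup_n\mathcal{V}_n$ is a \emph{set} of open sets and the witnesses produced for different $n$ need not be distinct. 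Nothing prevents a single open set $U_0$ with $x\in\overline{U_0}$ from belonging to every $\mathcal{U}_n$; if it is enumerated first it lies in every $\mathcal{V}_n$ and may be the only selected constituent whose closure contains $x$, while the selected unions $W_{n,m}\supseteq U_0$ remain pairwise distinct because their other constituents vary. Then $\bigcup_n\mathcal{H}_n\in\overline{\Lambda}$ holds but $\{V\in\bigcup_n\mathcal{V}_n : x\in\overline{V}\}=\{U_0\}$ is finite, and the same collapse can defeat the $\Lambda_D$ and $\Lambda^D$ clauses.

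The paper closes this hole by first assuming, without loss of generality, that every finite subfamily of $\bigcup_{n}\mathcal{U}_n$ meets $\mathcal{U}_k$ for only finitely many $k$; then each finite $\mathcal{V}_n$ is disjoint from all but finitely many $\mathcal{U}_k$, so a fixed open set can be selected only finitely often and infinitely many contributing indices really do yield infinitely many distinct members. You need either this reduction (together with a word on why it is harmless) or some other device forcing the witnesses to be distinct. A secondary, more minor issue: in the degenerate case you mention, the sets $W_{n,m}$ stabilize, so $\mathcal{G}_n$ is a finite family and not a $\gamma$-cover at all; ``absorbed by enlarging the selections'' is not yet an argument.
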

\begin{proof}
We only give proof for the first assertion.
Let $(\mathcal{U}_n)$ be a sequence of $\omega$-covers of $X$. Without loss of generality we may assume that for each finite set $\mathcal{F}\subseteq\cup_{n\in\mathbb{N}}\mathcal{U}_n$, $\mathcal{U}_k\cap\mathcal{F}=\emptyset$ for all but finitely many $k$. For each $n$ enumerate $\mathcal{U}_n$ bijectively as $\{U_m^{(n)} : m\in\mathbb{N}\}$. Next for each $n$ and each $m$ define $V_m^{(n)}=\cup_{1\leq i\leq m}U_i^{(n)}$. Clearly $\mathcal{W}_n=\{V_m^{(n)} : m\in\mathbb{N}\}$ is a $\gamma$-cover of $X$ for each $n$. Using the hypothesis we can find a sequence $(\mathcal{H}_n)$ such that for each $\mathcal{H}_n$ is a finite subset of $\mathcal{W}_n$ and $\cup_{n\in\mathbb{N}}\mathcal{H}_n\in\overline{\Lambda}$. Clearly $(\mathcal{H}_n)$ produces a sequence $(\mathcal{V}_n)$ such that for each $n$ $\mathcal{V}_n$ is a finite subset of $\mathcal{U}_n$ and $\cup\mathcal{V}_n=\cup\mathcal{H}_n$. Since each $\mathcal{V}_n$ is disjoint from $\mathcal{U}_k$ for all but finitely many $k$, $\cup_{n\in\mathbb{N}}\mathcal{V}_n\in\overline{\Lambda}$. Thus $X$ has the property $\Sf(\Omega,\overline{\Lambda})$.
\end{proof}

\section{Weaker versions of the Scheepers property}
\subsection{Almost Scheepers and related spaces}
We introduce the following definitions.
\begin{Def}\hfill
\begin{enumerate}[wide=0pt,label={\upshape(\arabic*)},leftmargin=*]
\item \label{D1}
A space $X$ is said to be almost Scheepers (in short, $\aS$) if $X$ satisfies the selection principle $\Uf(\mathcal{O},\overline{\Omega})$ i.e. if for every sequence $(\mathcal{U}_n)$ of open covers of $X$ there exists a sequence $(\mathcal{V}_n)$ such that for each $n$ $\mathcal{V}_n$ is a finite subset of $\mathcal{U}_n$ and $\{\cup\mathcal{V}_n : n\in\mathbb{N}\}\in\overline{\Omega}$.
\item \label{D2}
A space $X$ is said to be weakly Scheepers (in short, $\wS$) if $X$ satisfies the selection principle $\Uf(\mathcal{O},\Omega_D)$.
\end{enumerate}
\end{Def}

We also introduce another weak variation of the Scheepers property.
\begin{Def}
\label{D3}
A space $X$ is said to be weakly Scheepers in the sense of Ko\v{c}inac (in short, $\wSk$) if $X$ satisfies the selection principle $\Uf(\mathcal{O},\Omega^D)$.
\end{Def}

The following equivalent formulations of the weaker forms of the Scheepers property will be used subsequently. A space $X$ is
\begin{enumerate}[wide=0pt,label={\upshape(\arabic*)},leftmargin=*]
\item $\aS$ if for every sequence $(\mathcal{U}_n)$ of open covers of $X$ there exists a sequence $(\mathcal{V}_n)$ such that for each $n$ $\mathcal{V}_n$ is a finite subset of $\mathcal{U}_n$ and each finite set $F\subseteq X$ is contained in $\overline{\cup\mathcal{V}_n}$ for infinitely many $n$.
\item $\wS$ if for every sequence $(\mathcal{U}_n)$ of open covers of $X$ there exists a sequence $(\mathcal{V}_n)$ such that for each $n$ $\mathcal{V}_n$ is a finite subset of $\mathcal{U}_n$ and for each finite collection $\mathcal{F}$ of nonempty open subsets of $X$, the set $\{n\in\mathbb{N} : U\cap(\cup\mathcal{V}_n)\neq\emptyset\;\text{for all}\;U\in\mathcal{F}$\} is infinite.
\item $\wSk$ if for every sequence $(\mathcal{U}_n)$ of open covers of $X$ there exist a dense subset $Y$ of $X$ and a sequence $(\mathcal{V}_n)$ such that for each $n$ $\mathcal{V}_n$ is a finite subset of $\mathcal{U}_n$ and each finite set $F\subseteq Y$ is contained in $\cup\mathcal{V}_n$ for infinitely many $n$.
\end{enumerate}

The relation between the weaker forms of the Hurewicz ({\sf H}), Scheepers ({\sf S}), Menger ({\sf M}) and Lindel\"{o}f ({\sf L}) properties are outlined in the following implication diagram (Figure~\ref{dig1}). It is easy to observe that under regularity condition, $\aS$ implies the Scheepers property and similarly for the other `almost' variations. Interestingly all the weaker versions are hereditary for clopen subsets. We will come back again with preservation like observations in the next section.
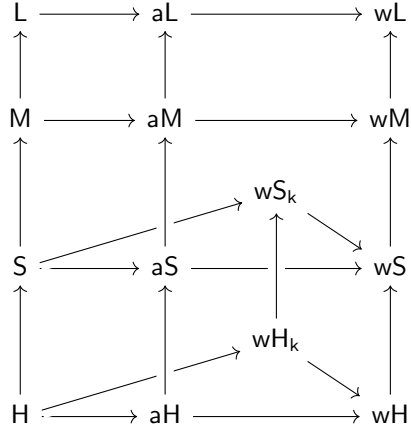
\begin{figure}[h]
\begin{adjustbox}{max width=\textwidth,max height=\textheight,keepaspectratio,center}
\begin{tikzcd}[column sep=4ex,row sep=3ex,arrows={crossing over}]
{\sf L}\arrow[rr]&&\aL\arrow[rr]&&\wL
\\
\\
{\sf M}\arrow[rr]\arrow[uu] && \aM\arrow[rr]\arrow[uu] && \wM\arrow[uu]
\\
&&&\wSk\arrow[rd]&
\\
{\sf S}\arrow[rr]\arrow[uu]\arrow[rrru]&&
\aS\arrow[rr]\arrow[uu]&&
\wS\arrow[uu]
\\
&&&\wHk\arrow[rd]\arrow[uu]&
\\
{\sf H}\arrow[rr]\arrow[uu]\arrow[rrru]&&
\aH\arrow[rr]\arrow[uu]&&
\wH\arrow[uu]
\end{tikzcd}
\end{adjustbox}
\caption{Weaker forms of the Hurewicz, Scheepers, Menger and Lindel\"{o}f properties}
\label{dig1}
\end{figure}

We now illustrate to distinguish these weaker forms of the classical selective properties.
\begin{Ex}
\label{E1}
\emph{There exists a $\aS$ space which is not Scheepers.}\\
Let $A=\{(a_\alpha,-1) : \alpha<\omega_1\}\subseteq \{(x,-1) : x\geq 0\}\subseteq\mathbb{R}^2$, $Y=\{(a_\alpha,n) : \alpha<\omega_1, n\in\mathbb{N}\}$ and $p=(-1,-1)$. Choose $X=Y\cup A\cup\{p\}$. We topologize $X$ as follows. $(i)$ Every point of $Y$ is isolated, $(ii)$ for each $\alpha<\omega_1$ a basic neighbourhood of $(a_\alpha,-1)$ is of the form $U_n(a_\alpha,-1)=\{(a_\alpha,-1)\}\cup\{(a_\alpha,m) : m\geq n\}$, where $n\in\mathbb{N}$ and $(iii)$ a basic neighbourhood of $p$ is of the form $U_\alpha(p)=\{p\}\cup\{(a_\beta,n) : \beta>\alpha, n\in\mathbb{N}\}$, where $\alpha<\omega_1$. Clearly $X$ is Urysohn. Since $A$ is an uncountable discrete closed subset of $X$, $X$ is not Lindel\"{o}f and hence not Scheepers.

We now show that $X$ is $\aS$. Let $(\mathcal{U}_n)$ be a sequence of open covers of $X$. For each $n$ choose a $U_n\in\mathcal{U}_n$ such that $p\in U_n$ and also choose a basic neighbourhood $U_{\beta_n}(p)\subseteq U_n$. Now for each $n$ $X\setminus\overline{U_n}$ is at most countable as $\overline{U_{\beta_n}(p)}=U_{\beta_n}(p)\cup\{(a_\beta,-1) : \beta>\beta_n\}$. Clearly $Y=\cup_{n\in\mathbb{N}}(X\setminus\overline{U_n})$ is $\sigma$-compact and hence Hurewicz. Apply the Hurewicz property of $Y$ to $(\mathcal{U}_n)$ to obtain a sequence $(\mathcal{V}_n^\prime)$ such that for each $n$ $\mathcal{V}_n^\prime$ is a finite subset of $\mathcal{U}_n$ and each $x\in Y$ belongs to $\cup\mathcal{V}_n^\prime$ for all but finitely many $n$. Clearly the sequence $(\mathcal{V}_n)$, where $\mathcal{V}_n=\mathcal{V}_n^\prime\cup\{U_n\}$ for each $n$ witnesses that $X$ is $\aH$ (and hence $\aS$).
\end{Ex}

For a Tychonoff space $X$, let $\beta X$ denote the Stone-\v{C}ech compactification of $X$.
\begin{Ex}
\label{E2}
\emph{There exists a $\wSk$ (and also a $\wS$) space which is not $\aS$ (and hence not Scheepers).}\\
Let $D$ be the discrete space of cardinality $\omega_1$. Consider $X=(\beta D\times (\omega+1))\setminus((\beta D\setminus D)\times\{\omega\})$ as a subspace of $\beta D\times(\omega+1)$. Since $\beta D\times\omega$ is a $\sigma$-compact dense subset of $X$, $X$ is $\wSk$ (see Section~\ref{S1} below). Now $X$ is not $\aL$ (see \cite[Example 2.3]{ALWL}) and so $X$ is not $\aS$.
\end{Ex}

The existence of a $\aS$ space which is not $\aH$ follows from the existence of a set of reals which is Scheepers but not Hurewicz (see \cite[Theorem 8.10]{SFPH}). By \cite[Theorem 2.1]{FPM}, there exists a set of reals which is Menger but not Scheepers. This shows that there is a $\aM$ space which is not $\aS$.

It is well known that every CCC space is $\wL$  \cite{ALWL,PRT1}. A CCC space need not satisfy any of the weaker variations of the Scheepers property.
\begin{Ex}
\label{E9}\emph{There is a CCC space which is neither $\aS$ nor $\wS$ nor $\wSk$.}\\
Let $Z=\PR(\mathbb{P})$, where $\mathbb{P}$ is the space of irrationals. Clearly $\mathbb{P}$ is a cosmic space which is not Menger. It has been observed that $\PR(X)$ is CCC for every regular cosmic space $X$ \cite{PRT1} and also if $\PR(X)$ is $\wM$, then each finite power of $X$ is Menger  \cite[Theorem 2A]{PRS}. Thus $Z$ is a CCC space which is not $\wM$. Which shows that $Z$ is not $\wS$ and hence $Z$ is neither $\aS$ nor $\wSk$.
\end{Ex}

We now reconfigure the above defined notions using regular-open sets and neighbourhood assignments.
\begin{Prop}
\hfill
\begin{enumerate}[wide=0pt,label={\upshape(\arabic*)},leftmargin=*]
\item \label{T1}
A space $X$ is $\aS$ if and only if for every sequence $(\mathcal{U}_n)$ of covers of $X$ by regular-open sets there is a sequence $(\mathcal{V}_n)$ such that for each $n$ $\mathcal{V}_n$ is a finite subset of $\mathcal{U}_n$ and for each finite set $F\subseteq X$ there is a $n$ such that $F\subseteq\overline{\cup\mathcal{V}_n}$.
\item\label{T2}
A space $X$ is $\wS$ if and only if for every sequence $(\mathcal{U}_n)$ of covers of $X$ by regular-open sets there exists a sequence $(\mathcal{V}_n)$ such that for each $n$ $\mathcal{V}_n$ is a finite subset of $\mathcal{U}_n$ and for each finite collection $\mathcal{F}$ of nonempty regular-open sets there is a $n$ such that $U\cap(\cup\mathcal{V}_n)\neq\emptyset$ for all $U\in\mathcal{F}$.
\item \label{T27}
Suppose that $X$ is regular. Then $X$ is $\wSk$ if and only if for every sequence $(\mathcal{U}_n)$ of covers of $X$ by regular-open sets there exist a dense subset $Y$ of $X$ and a sequence $(\mathcal{V}_n)$ such that for each $n$ $\mathcal{V}_n$ is a finite subset of $\mathcal{U}_n$ and for each finite set $F\subseteq Y$ there is a $n$ such that $F\subseteq\cup\mathcal{V}_n$.
\end{enumerate}
\end{Prop}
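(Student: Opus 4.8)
The plan is to prove each of the three biconditionals by separating the two implications, the forward one being essentially formal and the reverse one carrying all the content. For the forward direction of every part I would simply observe that a cover by regular-open sets is in particular an open cover, and a finite family of nonempty regular-open sets is in particular a finite family of nonempty open sets. Thus, applying the relevant selection principle ($\Uf(\mathcal{O},\overline{\Omega})$, $\Uf(\mathcal{O},\Omega_D)$, or $\Uf(\mathcal{O},\Omega^D)$) to a sequence of covers by regular-open sets, and then reading off the $\overline{\Omega}$-, $\Omega_D$-, or $\Omega^D$-membership of $\{\cup\mathcal{V}_n : n\in\mathbb{N}\}$ restricted to regular-open test data, yields the stated characterization with no extra work.

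For the reverse directions of \ref{T1} and \ref{T2} I would pass from an arbitrary sequence $(\mathcal{U}_n)$ of open covers to the sequence $(\mathcal{U}_n^\ast)$ with $\mathcal{U}_n^\ast=\{\Int(\overline{U}) : U\in\mathcal{U}_n\}$. The elementary facts $U\subseteq\Int(\overline{U})\subseteq\overline{U}$ and $\overline{\Int(\overline{U})}=\overline{U}$ (valid in any space, no regularity needed) show that each $\Int(\overline{U})$ is regular-open and that $\mathcal{U}_n^\ast$ covers $X$. Applying the hypothesis to $(\mathcal{U}_n^\ast)$ produces finite selections $\mathcal{W}_n\subseteq\mathcal{U}_n^\ast$; choosing for each $\Int(\overline{U})\in\mathcal{W}_n$ a witness $U\in\mathcal{U}_n$ gives finite $\mathcal{V}_n\subseteq\mathcal{U}_n$. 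For \ref{T1} the crucial identity is $\overline{\cup\mathcal{V}_n}=\bigcup_{U\in\mathcal{V}_n}\overline{U}=\bigcup_{U\in\mathcal{V}_n}\overline{\Int(\overline{U})}=\overline{\cup\mathcal{W}_n}$, using that closure commutes with finite unions; hence containment of a finite $F$ in $\overline{\cup\mathcal{W}_n}$ transfers verbatim to $\overline{\cup\mathcal{V}_n}$. For \ref{T2} I would use two density facts: a nonempty open set meeting $\Int(\overline{U})$ must meet $U$ (any nonempty open subset of $\overline{U}$ meets the dense set $U$), and every nonempty open $V$ is dense in $\Int(\overline{V})$. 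Replacing a finite family $\mathcal{F}$ of nonempty open test sets by $\{\Int(\overline{V}) : V\in\mathcal{F}\}$ and chaining these two observations transfers the meeting condition between $\cup\mathcal{W}_n$ and $\cup\mathcal{V}_n$ in both directions. In both parts, membership of $\{\cup\mathcal{V}_n\}$ in $\overline{\mathcal{O}}$, respectively $\mathcal{O}_D$, is recovered from the one-point instances, so the full $\overline{\Omega}$- / $\Omega_D$-membership holds.

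For the reverse direction of \ref{T27} the symmetric substitution $U\mapsto\Int(\overline{U})$ fails, since $\wSk$ demands genuine containment $F\subseteq\cup\mathcal{V}_n$ of finite subsets of a dense set, and $F\subseteq\bigcup_{U}\Int(\overline{U})$ need not yield $F\subseteq\bigcup_{U}U$. This is exactly where the standing regularity hypothesis is used. Instead I would refine each open cover $\mathcal{U}_n$ by the family $\mathcal{U}_n^\sharp$ of all regular-open sets $W$ with $\overline{W}\subseteq U$ for some $U\in\mathcal{U}_n$; regularity ensures $\mathcal{U}_n^\sharp$ still covers $X$ (given $x\in U$, pick open $V$ with $x\in V\subseteq\overline{V}\subseteq U$ and take $W=\Int(\overline{V})$). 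Applying the hypothesis to $(\mathcal{U}_n^\sharp)$ yields a dense $Y$ and finite $\mathcal{W}_n\subseteq\mathcal{U}_n^\sharp$; choosing for each $W\in\mathcal{W}_n$ some $U_W\in\mathcal{U}_n$ with $\overline{W}\subseteq U_W$ and setting $\mathcal{V}_n=\{U_W : W\in\mathcal{W}_n\}$ gives $\cup\mathcal{W}_n\subseteq\bigcup_{W}\overline{W}\subseteq\cup\mathcal{V}_n$. Thus any finite $F\subseteq Y$ with $F\subseteq\cup\mathcal{W}_n$ also satisfies $F\subseteq\cup\mathcal{V}_n$, so the same dense $Y$ witnesses the $\Omega^D$-condition; the $\Omega_D$-part is automatic, because density of $Y$ lets any finite family of nonempty open sets be realized by points of $Y$.

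The main obstacle is the reverse direction of \ref{T27}: one must recognize that the closure-based substitution adequate for \ref{T1} and \ref{T2} is too weak for $\wSk$ and replace it by a regularity-driven refinement whose members have closures trapped inside the original cover, so that actual set-containment—not merely closure-containment or density—is preserved. The remaining delicate (though routine) point is the two-way transfer of the open/regular-open meeting condition in \ref{T2}, which rests entirely on the density of an open set in the regular-open set it generates.
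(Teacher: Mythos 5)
Your proof is correct; note that the paper states this Proposition without any proof, so there is nothing to compare against, but your argument is the natural one it implicitly relies on: the substitution $U\mapsto\Int(\overline{U})$ (with $\overline{\Int(\overline{U})}=\overline{U}$ and the density of an open set in its regular-open hull) handles \ref{T1} and \ref{T2}, and the regularity-driven refinement by regular-open sets $W$ with $\overline{W}\subseteq U$ is exactly what is needed for \ref{T27}, where genuine containment rather than closure-containment must be preserved. All steps check out, including the recovery of full $\overline{\Omega}$-, $\Omega_D$- and $\Omega^D$-membership from the regular-open test data.
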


Recall that a neighbourhood assignment for a space $(X,\tau)$ is a function $\n:X\to\tau$ such that $x\in \n(x)$ for each $x\in X$.
\begin{Th}
\label{T52}
\hfill
\begin{enumerate}[wide=0pt,label={\upshape(\arabic*)},leftmargin=*]
  \item A space $X$ is $\aS$ if and only if for each sequence $(\n_n)$ of neighbourhood assignments of $X$ there exists a sequence $(F_n)$ of finite subsets of $X$ such that $\{\n(F_n) : n\in\mathbb{N}\}\in\overline{\Omega}$.
  \item A space $X$ is $\wS$ if and only if for each sequence $(\n_n)$ of neighbourhood assignments of $X$ there exists a sequence $(F_n)$ of finite subsets of $X$ such that $\{\n(F_n) : n\in\mathbb{N}\}\in\Omega_D$.
  \item A space $X$ is $\wSk$ if and only if for each sequence $(\n_n)$ of neighbourhood assignments of $X$ there exists a sequence $(F_n)$ of finite subsets of $X$ such that $\{\n(F_n) : n\in\mathbb{N}\}\in\Omega^D$.
\end{enumerate}
\end{Th}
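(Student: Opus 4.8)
The plan is to establish all three equivalences by a single argument, namely a direct ``dictionary'' between sequences of neighbourhood assignments and the sequences of open covers they generate; only the target family ($\overline{\Omega}$, $\Omega_D$ or $\Omega^D$) varies from one item to the next. Throughout, for a neighbourhood assignment $\n$ and a finite set $F\subseteq X$ I write $\n(F)=\bigcup_{x\in F}\n(x)$, so that $\n_n(F_n)$ is the open set appearing in the statement.

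For the forward implication I would begin with a sequence $(\n_n)$ of neighbourhood assignments and turn each into an open cover by putting $\mathcal{U}_n=\{\n_n(x):x\in X\}$; this is a cover of $X$ because $x\in\n_n(x)$ for every $x$. Applying the relevant selection principle (for item (1), $\Uf(\mathcal{O},\overline{\Omega})$, and similarly $\Uf(\mathcal{O},\Omega_D)$ and $\Uf(\mathcal{O},\Omega^D)$ for items (2) and (3)) gives finite subfamilies $\mathcal{V}_n\subseteq\mathcal{U}_n$ with $\{\cup\mathcal{V}_n:n\in\mathbb{N}\}$ in the target family. For each $V\in\mathcal{V}_n$ I pick a point $x_V$ with $\n_n(x_V)=V$ and set $F_n=\{x_V:V\in\mathcal{V}_n\}$; then $F_n$ is finite and $\n_n(F_n)=\cup\mathcal{V}_n$, so $\{\n_n(F_n):n\in\mathbb{N}\}$ lies in exactly the same family, which is what item (1) (respectively (2), (3)) demands.

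For the reverse implication I would run the dictionary backwards: given a sequence $(\mathcal{U}_n)$ of open covers, I use that $\mathcal{U}_n$ covers $X$ to select for each $x\in X$ some member $\n_n(x)\in\mathcal{U}_n$ containing $x$, which defines a neighbourhood assignment $\n_n$. The hypothesis supplies finite sets $F_n\subseteq X$ with $\{\n_n(F_n):n\in\mathbb{N}\}$ in the target family, and then $\mathcal{V}_n=\{\n_n(x):x\in F_n\}$ is a finite subfamily of $\mathcal{U}_n$ with $\cup\mathcal{V}_n=\n_n(F_n)$, verifying the corresponding selection principle. In item (3) no regularity is assumed, but this causes no trouble: the dense set $Y\subseteq X$ witnessing membership in $\Omega^D$ depends only on the family $\{\n_n(F_n):n\in\mathbb{N}\}=\{\cup\mathcal{V}_n:n\in\mathbb{N}\}$, which is literally unchanged by the translation, so the same $Y$ serves on both sides.

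I expect the only genuine bookkeeping to come from the standing convention that all covers be infinite, together with the escape clause ``$\cup\mathcal{V}_n=X$ for some $n$'' carried by $\Uf$. I would dispose of these by noting that whenever some $\n_n(F_n)=X$ the $\Omega$-type condition at that index is met outright, and that in translating between the two sides one may work instead with the equivalent ``for infinitely many $n$'' formulations recorded above. With that reading each defining clause of $\overline{\Omega}$, $\Omega_D$ and $\Omega^D$ is plainly invariant under replacing $\cup\mathcal{V}_n$ by the equal set $\n_n(F_n)$, and no idea beyond the neighbourhood-assignment/open-cover correspondence is required.
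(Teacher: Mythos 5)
Your proposal is correct and follows essentially the same route as the paper's own proof: both directions use the straightforward correspondence $\mathcal{U}_n=\{\n_n(x):x\in X\}$ and $F_n\leftrightarrow\mathcal{V}_n=\{\n_n(x):x\in F_n\}$, with the target family unchanged under the translation. Your extra remarks on the $\Uf$ escape clause and the infinite-cover convention are sound but not needed beyond what the paper already assumes.
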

\begin{proof}
We only give proof for $(1)$.
Let $(\n_n)$ be a sequence of neighbourhood assignments of $X$. For each $n$ we define $\mathcal{U}_n=\{\n_n(x) : x\in X\}$. Then $(\mathcal{U}_n)$ is a sequence of open covers of $X$ and since $X$ is $\aS$, there exists a sequence $(\mathcal{V}_n)$ such that for each $n$ $\mathcal{V}_n$ is a finite subset of $\mathcal{U}_n$ and $\{\cup\mathcal{V}_n : n\in\mathbb{N}\}\in\overline{\Omega}$. This gives us a sequence $(F_n)$ of finite subsets of $X$ such that $\mathcal{V}_n=\{\n_n(x) : x\in F_n\}$ for each $n$. It follows that $\{\n_n(F_n) : n\in\mathbb{N}\}\in\overline{\Omega}$.

Conversely let $(\mathcal{U}_n)$ be a sequence of open covers of $X$. For each $x\in X$ there exists a $U_x^{(n)}\in\mathcal{U}_n$ such that $x\in U_x^{(n)}$. Consider the sequence $(\n_n)$ of neighbourhood assignments of $X$, where $\n_n(x)=U_x^{(n)}$ for each $n$ and for each $x\in X$. By the given hypothesis, there is a sequence $(F_n)$ of finite subsets of $X$ such that $\{\n_n(F_n) : n\in\mathbb{N}\}\in\overline{\Omega}$. For each $n$ choose $\mathcal{V}_n=\{\n_n(x) : x\in F_n\}$. Clearly the sequence $(\mathcal{V}_n)$ witnesses for $(\mathcal{U}_n)$ that $X$ is $\aS$.
\end{proof}

The following result can be easily verified.
\begin{Th}
\label{T26}
Every paracompact $\aS$ space is Scheepers.
\end{Th}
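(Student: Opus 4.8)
The plan is to exploit the single structural difference between $\aS$ and the Scheepers property: the former controls each finite set only through the \emph{closures} $\overline{\cup\mathcal{V}_n}$, whereas the latter demands control through the sets $\cup\mathcal{V}_n$ themselves. The bridge between the two is regularity (a paracompact Hausdorff space is normal, hence regular) together with the elementary fact that the closure of a \emph{finite} union is the union of the closures. So the whole proof amounts to shrinking the given covers, running $\aS$ on the shrunk covers, and lifting the selections back.

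First I would shrink. Let $(\mathcal{U}_n)$ be a sequence of open covers of $X$. Using regularity, for each $n$ set
\[
\mathcal{U}_n^\ast=\{W\in\tau : \overline{W}\subseteq U\text{ for some }U\in\mathcal{U}_n\}.
\]
By regularity every point of $X$ lies in such a $W$, so each $\mathcal{U}_n^\ast$ is again an open cover of $X$, and by construction the closure of every member of $\mathcal{U}_n^\ast$ sits inside some member of $\mathcal{U}_n$. Next I would apply the $\aS$ property (in the equivalent formulation given just before Theorem~\ref{T26}) to $(\mathcal{U}_n^\ast)$, obtaining finite sets $\mathcal{V}_n^\ast\subseteq\mathcal{U}_n^\ast$ such that each finite $F\subseteq X$ is contained in $\overline{\cup\mathcal{V}_n^\ast}$ for infinitely many $n$.

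Now I would lift the selection. For each $W\in\mathcal{V}_n^\ast$ pick $U_W\in\mathcal{U}_n$ with $\overline{W}\subseteq U_W$, and put $\mathcal{V}_n=\{U_W : W\in\mathcal{V}_n^\ast\}$, a finite subset of $\mathcal{U}_n$. Since $\mathcal{V}_n^\ast$ is finite,
\[
\overline{\cup\mathcal{V}_n^\ast}=\bigcup_{W\in\mathcal{V}_n^\ast}\overline{W}\subseteq\bigcup_{W\in\mathcal{V}_n^\ast}U_W=\cup\mathcal{V}_n .
\]
Consequently, whenever a finite $F$ lies in $\overline{\cup\mathcal{V}_n^\ast}$ it already lies in $\cup\mathcal{V}_n$; hence each finite $F\subseteq X$ is contained in $\cup\mathcal{V}_n$ for at least one (indeed infinitely many) $n$. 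This says exactly that $\{\cup\mathcal{V}_n : n\in\mathbb{N}\}\in\Omega$, which is the witness required for $\Uf(\mathcal{O},\Omega)$, so $X$ is Scheepers.

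The only delicate point—and the place where regularity (supplied by paracompactness) is genuinely spent—is the passage from $\overline{\cup\mathcal{V}_n^\ast}$ back to $\cup\mathcal{V}_n$. The identity $\overline{\bigcup_i W_i}=\bigcup_i\overline{W_i}$ fails for infinite families, so it is essential that each selected family $\mathcal{V}_n^\ast$ be \emph{finite}; this is precisely what the finite selections of the $\Uf$-scheme guarantee, so nothing beyond regularity and finiteness is needed. (This also explains the parenthetical ``and also regular'' in the introduction: regularity alone already drives the argument, with paracompactness entering only to furnish it.)
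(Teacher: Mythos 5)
Your proof is correct and is precisely the standard shrinking argument that the paper leaves to the reader (the theorem is stated with only the remark that it ``can be easily verified''): regularity lets you refine each cover so that closures of the refined members sit inside members of the original cover, and finiteness of the selections turns $\overline{\cup\mathcal{V}_n^\ast}$ into a union of closures, hence into a subset of $\cup\mathcal{V}_n$. Your closing observation that only regularity is actually used matches the paper's parenthetical ``(and also regular)'' in the introduction.
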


An open cover $\mathcal{U}$ of a space $X$ is called star finite if every $U\in\mathcal{U}$ intersects only finitely many $V\in\mathcal{U}$. A space $X$ is called hypocompact if every open cover $\mathcal{U}$ of $X$ has a star finite open refinement. Since every hypocompact space is paracompact, we obtain the following.

\begin{Cor}
\label{C18}
Every hypocompact $\aS$ space is Scheepers.
\end{Cor}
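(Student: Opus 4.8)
The plan is to obtain this as an immediate specialization of Theorem~\ref{T26}, exploiting the fact—recorded in the paragraph just before the statement—that every hypocompact space is paracompact. Since Theorem~\ref{T26} already settles the paracompact case, the entire content of the corollary reduces to checking that hypocompactness is, for present purposes, a strengthening of paracompactness.

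First I would verify the bridging implication explicitly. If $X$ is hypocompact, then every open cover $\mathcal{U}$ of $X$ admits a star finite open refinement $\mathcal{W}$, meaning each $W\in\mathcal{W}$ meets only finitely many members of $\mathcal{W}$. A star finite family is in particular locally finite: given $x\in X$, pick any $W_0\in\mathcal{W}$ with $x\in W_0$, and note that $W_0$ is a neighbourhood of $x$ meeting only finitely many members of $\mathcal{W}$. Hence $\mathcal{W}$ is a locally finite open refinement of $\mathcal{U}$, so $X$ is paracompact.

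With this in hand, the conclusion is one step. Given a hypocompact space $X$ that is $\aS$, the previous paragraph shows $X$ is paracompact, so $X$ is a paracompact $\aS$ space; Theorem~\ref{T26} then applies directly and yields that $X$ is Scheepers. I do not expect any genuine obstacle here: the only substantive point is the elementary observation that a star finite open family is locally finite, and everything else is a verbatim invocation of Theorem~\ref{T26}.
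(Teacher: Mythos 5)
Your proposal is correct and matches the paper's route exactly: the paper likewise notes that every hypocompact space is paracompact (a star finite open refinement being locally finite) and then invokes Theorem~\ref{T26} for paracompact $\aS$ spaces. Nothing further is needed.
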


The above result does not hold in the context of $\wS$ and $\wSk$ spaces. The Baire space is a typical counter-example to it (see Remark~\ref{R1}).

\begin{Th}
\label{T5}\hfill
\begin{enumerate}[wide=0pt,label={\upshape(\arabic*)}]
\item If every finite power of a space $X$ is $\aM$, then $X$ is $\aS$.
\item If every finite power of a space $X$ is $\wM$, then $X$ is $\wS$.
\end{enumerate}
\end{Th}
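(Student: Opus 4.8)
The plan is to reduce the selection problem on $X$ to almost-Menger (respectively weakly Menger) selections on the finite powers $X^k$, the bridge being the elementary identity $\overline{U_1\times\cdots\times U_k}=\overline{U_1}\times\cdots\times\overline{U_k}$ in the product topology (and, for the weak version, the fact that a box $V_1\times\cdots\times V_k$ is nonempty open whenever each $V_i$ is). The point is that membership of the selected unions in $\overline{\Omega}$ (resp. $\Omega_D$) is a statement about \emph{finite} subsets (resp. finite families of open sets) of $X$, and a finite set of size $k$ is exactly a single point of $X^k$.

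First I fix once and for all a partition $\mathbb{N}=\bigsqcup_{k\in\mathbb{N}}M_k$ into pairwise disjoint infinite sets. Given a sequence $(\mathcal{U}_n)$ of open covers of $X$, for each $k$ and each $n\in M_k$ I form the open cover $\mathcal{W}_n=\{U_1\times\cdots\times U_k : U_i\in\mathcal{U}_n\}$ of $X^k$ (it covers because $\mathcal{U}_n$ covers $X$). For assertion $(1)$ I apply the $\aM$ property (that is, $\Sf(\mathcal{O},\overline{\mathcal{O}})$) of $X^k$ to the sequence $(\mathcal{W}_n)_{n\in M_k}$, obtaining finite $\mathcal{W}_n'\subseteq\mathcal{W}_n$ whose closures cover $X^k$. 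Let $\mathcal{V}_n\subseteq\mathcal{U}_n$ be the finite set of all factor sets occurring in the boxes of $\mathcal{W}_n'$; since $\mathbb{N}=\bigsqcup_k M_k$, this defines $\mathcal{V}_n$ for every $n$. Now take any finite $F=\{x_1,\dots,x_{k_0}\}\subseteq X$ and any $k\ge k_0$. The padded tuple $(x_1,\dots,x_{k_0},x_{k_0},\dots,x_{k_0})\in X^k$ lies in the closure of some selected box $U_1\times\cdots\times U_k\in\mathcal{W}_n'$ for some $n\in M_k$, whence $x_i\in\overline{U_i}\subseteq\overline{\cup\mathcal{V}_n}$ for all $i$ and so $F\subseteq\overline{\cup\mathcal{V}_n}$. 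Letting $k$ range over all $k\ge k_0$ and using that the $M_k$ are disjoint produces infinitely many such indices $n$, so $\{\cup\mathcal{V}_n : n\in\mathbb{N}\}\in\overline{\Omega}$ and $X$ is $\aS$.

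For assertion $(2)$ I run the identical scheme with $\wM$ (that is, $\Sf(\mathcal{O},\mathcal{O}_D)$) in place of $\aM$: the selection now makes $\bigcup_{n\in M_k}(\cup\mathcal{W}_n')$ dense in $X^k$. Given a finite family $\mathcal{F}=\{V_1,\dots,V_{k_0}\}$ of nonempty open subsets of $X$ and any $k\ge k_0$, the box $V_1\times\cdots\times V_{k_0}\times V_{k_0}\times\cdots\times V_{k_0}$ is a nonempty open subset of $X^k$, so it meets some selected box $U_1\times\cdots\times U_k\in\mathcal{W}_n'$ with $n\in M_k$; then $U_i\cap V_i\neq\emptyset$ for each $i\le k_0$, so $\cup\mathcal{V}_n$ meets every member of $\mathcal{F}$. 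Disjointness of the $M_k$ again yields infinitely many such $n$, which is exactly the equivalent formulation of $\{\cup\mathcal{V}_n : n\in\mathbb{N}\}\in\Omega_D$; hence $X$ is $\wS$.

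I expect the main obstacle to be purely conceptual rather than computational: recognizing that the ``almost'' (closure) and ``weak'' (density) conditions in $X^k$ translate \emph{exactly} into the $\overline{\Omega}$ and $\Omega_D$ conditions on $X$ via the product-closure identity and box-intersection. The only secondary point needing care is upgrading ``for some $n$'' to ``for infinitely many $n$'' — which is precisely the reason for spreading the powers across a disjoint partition $(M_k)$, since a finite set or finite family of size $k_0$ is treated simultaneously by every power $X^k$ with $k\ge k_0$.
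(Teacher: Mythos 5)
Your proposal is correct and follows essentially the same route as the paper: partition $\mathbb{N}$ into infinite blocks $M_k$, apply the $\aM$ (resp.\ $\wM$) property of $X^k$ to the box covers on the $k$-th block, and read off the factor sets as the selection for $X$, with the product-closure identity handling the ``almost'' case. The only cosmetic difference is that you pad a size-$k_0$ set (or family) into every power $k\ge k_0$ to extract infinitely many good indices directly, whereas the paper works only in $X^p$ and relies on the definition of $\overline{\Omega}$/$\Omega_D$; both are fine.
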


\begin{proof}
We only give proof of (2).
Let $(\mathcal{U}_n)$ be a sequence of open covers of $X$. Let $\{N_k : k\in\mathbb{N}\}$ be a partition of $\mathbb{N}$ into infinite subsets. For each $k$ and each $m\in N_k$ let $\mathcal{W}_m=\{U_1\times U_2\times\dotsm\times U_k : U_1,U_2,\dotsc,U_k\in\mathcal{U}_m\}$. Clearly $(\mathcal{W}_m : m\in N_k)$ is a sequence of open covers of $X^k$. Apply the $\wM$ property of $X^k$ to $(\mathcal{W}_m : m\in N_k)$ to obtain a sequence $(\mathcal{H}_m : m\in N_k)$ of finite sets such that $\mathcal{H}_m\subseteq \mathcal{W}_m$ for each $m\in N_k$ and $\cup\{\cup\mathcal{H}_m : m\in N_k\}$ is dense in $X^k$. For each $m\in N_k$  we can express every $H\in\mathcal{H}_m$ as $H=U_1(H)\times U_2(H)\times\dotsm\times U_k(H)$, where $U_1(H),U_2(H),\dotsc,U_k(H)\in\mathcal{U}_m$. Choose $\mathcal{V}_m=\{U_i(H) : 1\leq i\leq k, H\in\mathcal{H}_m\}$. Clearly $\mathcal{V}_m$ is a finite subset of $\mathcal{U}_m$ for each $m\in N_k$. Thus we obtain a sequence $(\mathcal{V}_n)$ of finite sets with $\mathcal{V}_n\subseteq\mathcal{U}_n$. To complete the proof, choose  a finite collection $\mathcal{F}=\{U_1,U_2,\dotsc,U_p\}$ of nonempty open subsets of $X$. Since $U_1\times U_2\times\dotsm\times U_p$ is a nonempty open set in $X^p$, there is a $m_0\in N_p$ such that $(U_1\times U_2\times\dotsm\times U_p)\cap(\cup\mathcal{H}_{m_0})\neq\emptyset$. Which in turn implies that $U_i\cap(\cup\mathcal{V}_{m_0})\neq\emptyset$ for each $1\leq i\leq p$. Clearly $(\mathcal{V}_n)$ witnesses that $X$ is $\wS$.
\end{proof}

For any families $\mathcal{U}$ and $\mathcal{V}$ of subsets of $X$ we denote the set $\{U\cap V : U\in\mathcal{U}\;\text{and}\; V\in\mathcal{V}\}$ by $\mathcal{U}\wedge\mathcal{V}$.
\begin{Th}
\label{T45}
For a space $X$ the following assertions are equivalent.
\begin{enumerate}[wide=0pt,label={\upshape(\arabic*)},leftmargin=*]
  \item $X$ is $\aS$.
  \item $X$ satisfies $\Uf(\mathcal{O},\overline{\mathcal{O}^{wgp}})$.
\end{enumerate}
\end{Th}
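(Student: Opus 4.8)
This is the closure (``almost'') analogue of the classical identity between the Scheepers property $\Uf(\mathcal{O},\Omega)$ and $\Uf(\mathcal{O},\mathcal{O}^{wgp})$, so I would prove the two implications separately, the first being routine and the second carrying the real content. For $(1)\Rightarrow(2)$, let $(\mathcal{U}_n)$ be a sequence of open covers; since $X$ is $\aS=\Uf(\mathcal{O},\overline{\Omega})$ there are finite $\mathcal{V}_n\subseteq\mathcal{U}_n$ with $\{\cup\mathcal{V}_n:n\in\mathbb{N}\}\in\overline{\Omega}$. This selected family is countable, so the remark in the Preliminaries — that every countable member of $\overline{\Omega}$ belongs to $\overline{\mathcal{O}^{wgp}}$ — shows at once that $\{\cup\mathcal{V}_n:n\in\mathbb{N}\}\in\overline{\mathcal{O}^{wgp}}$, whence $X$ satisfies $\Uf(\mathcal{O},\overline{\mathcal{O}^{wgp}})$ (the degenerate clause $\cup\mathcal{V}_n=X$ being trivial, and all families here infinite by convention).

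For $(2)\Rightarrow(1)$ I would first put the problem into a normal form convenient for groupability. To witness $\aS$ for an arbitrary sequence $(\mathcal{U}_n)$ of open covers it is enough to select, from the finite-union closures $\mathcal{U}_n^{\wedge}=\{U_1\cup\dots\cup U_r:r\in\mathbb{N},\,U_i\in\mathcal{U}_n\}$ — each an $\omega$-cover closed under finite unions, the case $X\in\mathcal{U}_n^{\wedge}$ being trivial — single members $W_n\in\mathcal{U}_n^{\wedge}$ with $\{W_n:n\in\mathbb{N}\}\in\overline{\Omega}$; writing $W_n=\cup\mathcal{V}_n$ for finite $\mathcal{V}_n\subseteq\mathcal{U}_n$ then returns the required selection. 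The plan is then to partition $\mathbb{N}=\bigsqcup_{k}N_k$ into infinitely many infinite pieces, exactly as in the proofs of Lemma~\ref{L2} and Theorem~\ref{T5}, to apply the hypothesis $\Uf(\mathcal{O},\overline{\mathcal{O}^{wgp}})$ to each subsequence $(\mathcal{U}_n^{\wedge}:n\in N_k)$, and to record for each $k$ a partition of $N_k$ into finite consecutive blocks $B^k_1,B^k_2,\dots$ such that every finite $F\subseteq X$ satisfies $F\subseteq\overline{\bigcup_{n\in B^k_j}W_n}$ for some $j$. Since $\overline{A\cup B}=\overline{A}\cup\overline{B}$ for finite unions, each such block furnishes a covering of $F$ by closures of selected sets, namely $F\subseteq\bigcup_{n\in B^k_j}\overline{W_n}$.

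The main obstacle — indeed the whole point — is to upgrade this block-wise coverage supplied by $\overline{\mathcal{O}^{wgp}}$, in which $F$ only lands in $\overline{\bigcup_{n\in B^k_j}W_n}$, a union over a block, to the single-term coverage demanded by $\overline{\Omega}$, in which $F$ must lie in a single $\overline{W_{n_0}}$. This is precisely the groupability phenomenon, and I do not expect it to be circumvented by the partition alone: the blocks range over several distinct covers, so a block-union is not a priori the union of a single $\mathcal{V}_{n_0}\subseteq\mathcal{U}_{n_0}$. It is here that the $\omega$-cover normal form and the closure-compatibility $\overline{A\cup B}=\overline{A}\cup\overline{B}$ must be combined with the grouping argument of \cite{cocVIII}, adapted so that unions are everywhere replaced by unions of closures. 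I expect the bookkeeping needed to keep each selected $\mathcal{V}_n$ a finite subfamily of the \emph{correct} $\mathcal{U}_n$, rather than of a union of several covers along a block, to be the delicate step, with the use of infinitely many infinite pieces delivering the ``for infinitely many $n$'' form of $\aS$ recorded after Definition~\ref{D3}.
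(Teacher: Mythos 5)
Your direction $(1)\Rightarrow(2)$ matches the paper's one-line argument. For $(2)\Rightarrow(1)$, however, you have correctly located the obstacle --- converting block-wise coverage $F\subseteq\overline{\cup\{\cup\mathcal{V}_n : n\in B\}}$ into single-index coverage $F\subseteq\overline{\cup\mathcal{V}_{n_0}}$ with $\mathcal{V}_{n_0}$ a finite subfamily of the one cover $\mathcal{U}_{n_0}$ --- but you stop short of resolving it, and the normal form you set up points the wrong way. Passing to the finite-union closures $\mathcal{U}_n^{\wedge}$ enlarges the selected sets, so a block union $\bigcup_{n\in B}W_n$ genuinely mixes members of several distinct covers and, as you yourself observe, cannot be charged to any single $\mathcal{U}_{n_0}$. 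The partition of $\mathbb{N}$ into infinitely many infinite pieces does not help here either; that device serves arguments about finite powers and the ``for infinitely many $n$'' refinement, not the collapse of a block to one index.

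The paper resolves the obstacle with the opposite refinement: it applies the hypothesis to the meets $\mathcal{W}_n=\bigwedge_{i\leq n}\mathcal{U}_i$, whose members are intersections $U_1\cap\dots\cap U_n$ with $U_i\in\mathcal{U}_i$. A finite selection $\mathcal{H}_i\subseteq\mathcal{W}_i$ then refines every $\mathcal{U}_n$ with $n\leq i$ simultaneously, since each $H\in\mathcal{H}_i$ is contained in the $\mathcal{U}_n$-term of its chosen representation. Consequently, if $n_1<n_2<\cdots$ are the block endpoints witnessing $\{\cup\mathcal{H}_n : n\in\mathbb{N}\}\in\overline{\mathcal{O}^{wgp}}$, the entire block union $\cup\{\cup\mathcal{H}_i : n_k\leq i<n_{k+1}\}$ is contained in $\cup\mathcal{V}_{n_k}$, where $\mathcal{V}_{n_k}$ is the finite set of $\mathcal{U}_{n_k}$-terms occurring in those representations; taking closures yields $F\subseteq\overline{\cup\mathcal{V}_{n_k}}$, which is exactly the $\overline{\Omega}$ conclusion required for $\aS$. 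This intersection trick is the whole content of the implication, and without it (or an equivalent device) your outline does not close.
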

\begin{proof}
$(1)\Rightarrow (2)$ follows from the fact that every countable member of $\overline{\Omega}$ is also a member of $\overline{\mathcal{O}^{wgp}}$. For the other implication, choose a sequence $(\mathcal{U}_n)$ of open covers of $X$. Consider the sequence $(\mathcal{W}_n)$ of open covers of $X$, where $\mathcal{W}_n=\wedge_{i\leq n}\mathcal{U}_n$ for each $n$. Since $X$ satisfies $\Uf(\mathcal{O},\overline{\mathcal{O}^{wgp}})$, there exists a sequence $(\mathcal{H}_n)$ such that for each $n$ $\mathcal{H}_n$ is a finite subset of $\mathcal{W}_n$ and $\{\cup\mathcal{H}_n : n\in\mathbb{N}\}\in\overline{\mathcal{O}^{wgp}}$. Choose a sequence $n_1<n_2<\cdots$ of positive integers witnessing $\{\cup\mathcal{H}_n : n\in\mathbb{N}\}\in\overline{\mathcal{O}^{wgp}}$ i.e. each finite set $F\subseteq X$ is contained in $\overline{\cup\{\cup\mathcal{H}_i : n_k\leq i<n_{k+1}\}}$ for some $k\in\mathbb{N}$. For each $n$ define \[\mathcal{K}_n=
\begin{cases}
  \cup_{i<n_1}\mathcal{H}_i, & \mbox{if }\; n<n_1 \\
  \cup_{n_k\leq i<n_{k+1}}\mathcal{H}_i, & \mbox{if}\; n_k\leq n<n_{k+1}.
\end{cases}\]

Now define a sequence $(\mathcal{V}_n)$ as follows. For each $n$ $\mathcal{V}_n$ is the collection of all members of $\mathcal{U}_n$ in the representation of each member of $\mathcal{K}_n$. Clearly for each $n$ $\mathcal{V}_n$ is a finite subset of $\mathcal{U}_n$ and $\cup\mathcal{K}_n\subseteq\cup\mathcal{V}_n$. The sequence $(\mathcal{V}_n)$ witnesses that $X$ is $\aS$.
\end{proof}

The next two results can be similarly verified.
\begin{Th}
\label{T46}
For a space $X$ the following assertions are equivalent.
\begin{enumerate}[wide=0pt,label={\upshape(\arabic*)},leftmargin=*]
  \item $X$ is $\wS$.
  \item $X$ satisfies $\Uf(\mathcal{O},{\mathcal{O}^{wgp}}_D)$.
\end{enumerate}
\end{Th}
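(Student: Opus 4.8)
The plan is to transcribe the proof of Theorem~\ref{T45} almost verbatim, replacing throughout the closure relation $F\subseteq\overline{\,\cdot\,}$ that governs the bar-covers by the intersection relation ``$V\cap(\cdot)\neq\emptyset$ for all $V$ in a finite family $\mathcal{F}$ of nonempty open sets'' that governs the subscript-$D$ covers.

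For $(1)\Rightarrow(2)$ I would argue exactly as in the opening sentence of the proof of Theorem~\ref{T45}. Being $\wS$ means $X$ satisfies $\Uf(\mathcal{O},\Omega_D)$, so any sequence $(\mathcal{U}_n)$ of open covers admits finite sets $\mathcal{V}_n\subseteq\mathcal{U}_n$ with $\{\cup\mathcal{V}_n : n\in\mathbb{N}\}\in\Omega_D$ (the degenerate alternative $\cup\mathcal{V}_n=X$ being trivial). The selected family is countable, and by the observation recorded in the preliminaries that every countable member of $\Omega_D$ is a member of ${\mathcal{O}^{wgp}}_D$, the very same $(\mathcal{V}_n)$ witnesses $\Uf(\mathcal{O},{\mathcal{O}^{wgp}}_D)$.

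For the converse $(2)\Rightarrow(1)$ I would follow the diagonalization of Theorem~\ref{T45}. Given a sequence $(\mathcal{U}_n)$ of open covers, form $\mathcal{W}_n=\wedge_{i\leq n}\mathcal{U}_i$, which is again an open cover of $X$, and apply $\Uf(\mathcal{O},{\mathcal{O}^{wgp}}_D)$ to obtain finite sets $\mathcal{H}_n\subseteq\mathcal{W}_n$ with $\{\cup\mathcal{H}_n : n\in\mathbb{N}\}\in{\mathcal{O}^{wgp}}_D$. Fix integers $n_1<n_2<\cdots$ witnessing this membership, so that for every finite collection $\mathcal{F}$ of nonempty open sets there is a $k$ with $V\cap\bigl(\cup\{\cup\mathcal{H}_i : n_k\leq i<n_{k+1}\}\bigr)\neq\emptyset$ for every $V\in\mathcal{F}$. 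Define $\mathcal{K}_n$ to be the block union exactly as in Theorem~\ref{T45}, and let $\mathcal{V}_n$ be the finite collection of all members of $\mathcal{U}_n$ occurring in the representations of the elements of $\mathcal{K}_n$; then each $\mathcal{V}_n$ is a finite subset of $\mathcal{U}_n$ with $\cup\mathcal{K}_n\subseteq\cup\mathcal{V}_n$. Density of $\cup\{\cup\mathcal{V}_n : n\in\mathbb{N}\}$ is inherited from $\{\cup\mathcal{H}_n : n\in\mathbb{N}\}\in\mathcal{O}_D$, and for a given finite family $\mathcal{F}$ the block index $k$ supplied above yields an $n$ for which $\cup\mathcal{V}_n\supseteq\cup\mathcal{K}_n$ meets every $V\in\mathcal{F}$; thus $\{\cup\mathcal{V}_n : n\in\mathbb{N}\}\in\Omega_D$ and $X$ is $\wS$.

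The only genuinely delicate point --- and the one I expect to be the main obstacle --- is the faithful transfer of the grouping step. In Theorem~\ref{T45} the $\overline{\mathcal{O}^{wgp}}$-witness is indexed by \emph{finite sets of points} and uses closures, whereas here the ${\mathcal{O}^{wgp}}_D$-witness must accommodate \emph{finite families of nonempty open sets} and uses nonempty intersections. Because the defining clauses of $\Omega_D$ and ${\mathcal{O}^{wgp}}_D$ are phrased uniformly in this open-intersection language, the block-union bookkeeping of Theorem~\ref{T45} survives intact after the substitution; one need only ensure that the conclusion is genuine membership in $\Omega_D$ (rather than merely meeting each $\mathcal{F}$ a single time), which is secured, analogously to the disjointification device in the proof of Lemma~\ref{L2}, by the standing convention that every cover under consideration is infinite.
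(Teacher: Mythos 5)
Your proposal is correct and is essentially the paper's own argument: the paper proves Theorem~\ref{T45} and then states that Theorem~\ref{T46} "can be similarly verified," and your transcription — using the observation that countable members of $\Omega_D$ lie in ${\mathcal{O}^{wgp}}_D$ for one direction, and the $\wedge_{i\leq n}\mathcal{U}_i$ refinement plus block-union diagonalization for the other — is exactly the intended modification, with the closure clause replaced by the open-intersection clause throughout.
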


\begin{Th}
\label{T47}
For a space $X$ the following assertions are equivalent.
\begin{enumerate}[wide=0pt,label={\upshape(\arabic*)},leftmargin=*]
  \item $X$ is $\wSk$.
  \item $X$ satisfies $\Uf(\mathcal{O},{\mathcal{O}^{wgp}}^D)$.
\end{enumerate}
\end{Th}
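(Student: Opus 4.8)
The plan is to follow the proof of Theorem~\ref{T45} almost verbatim, replacing the families $\overline{\Omega}$ and $\overline{\mathcal{O}^{wgp}}$ by $\Omega^D$ and ${\mathcal{O}^{wgp}}^D$ respectively, and carrying along the auxiliary dense set $Y$ that now appears in the definitions of these two families. For the direction $(1)\Rightarrow(2)$ I would invoke the observation recorded just after the list of cover classes, namely that every countable member of $\Omega^D$ is a member of ${\mathcal{O}^{wgp}}^D$. Thus, starting from a sequence $(\mathcal{U}_n)$ of open covers and applying the $\wSk$ property $\Uf(\mathcal{O},\Omega^D)$, one obtains finite sets $\mathcal{V}_n\subseteq\mathcal{U}_n$ with $\{\cup\mathcal{V}_n : n\in\mathbb{N}\}\in\Omega^D$; since this family is countable it lies in ${\mathcal{O}^{wgp}}^D$, which says exactly that $X$ satisfies $\Uf(\mathcal{O},{\mathcal{O}^{wgp}}^D)$.

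For $(2)\Rightarrow(1)$ I would mirror the wedge construction of Theorem~\ref{T45}. Given a sequence $(\mathcal{U}_n)$ of open covers, set $\mathcal{W}_n=\wedge_{i\leq n}\mathcal{U}_i$ and apply $\Uf(\mathcal{O},{\mathcal{O}^{wgp}}^D)$ to the sequence $(\mathcal{W}_n)$, obtaining a sequence $(\mathcal{H}_n)$ with $\mathcal{H}_n$ a finite subset of $\mathcal{W}_n$ and $\{\cup\mathcal{H}_n : n\in\mathbb{N}\}\in{\mathcal{O}^{wgp}}^D$. Unwinding the definition of ${\mathcal{O}^{wgp}}^D$ supplies a dense set $Y\subseteq X$ together with a block sequence $n_1<n_2<\cdots$ such that each finite $F\subseteq Y$ is contained in $\cup\{\cup\mathcal{H}_i : n_k\leq i<n_{k+1}\}$ for some $k$. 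I then define the blocks $\mathcal{K}_n$ exactly as in Theorem~\ref{T45} and let $\mathcal{V}_n$ consist of all members of $\mathcal{U}_n$ occurring in the representation of the members of $\mathcal{K}_n$. Each $\mathcal{V}_n$ is a finite subset of $\mathcal{U}_n$ and $\cup\mathcal{K}_n\subseteq\cup\mathcal{V}_n$, so the same dense set $Y$ witnesses that each finite $F\subseteq Y$ is captured cofinally by the sequence $(\cup\mathcal{V}_n)$; hence $\{\cup\mathcal{V}_n : n\in\mathbb{N}\}\in\Omega^D$ and $X$ is $\wSk$.

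The part that needs the most care is the bookkeeping of the dense set $Y$: one must check that the single dense set produced by membership in ${\mathcal{O}^{wgp}}^D$ serves simultaneously as the dense witness required by $\Omega^D$, and that, after regrouping into the blocks $\mathcal{K}_n$ and reconstructing $(\mathcal{V}_n)$, both the $\omega$-cover clause relative to $Y$ and the density clause of $\Omega_D$ are preserved. As in Theorem~\ref{T45}, the wedge covers $\mathcal{W}_n=\wedge_{i\leq n}\mathcal{U}_i$ are what allow the passage from the grouped, Menger-style selection back to the $\omega$-cover strength demanded by $\Omega^D$, so no genuinely new idea beyond Theorem~\ref{T45} is required; the only real work is verifying that the dense witness survives every step.
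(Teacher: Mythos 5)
Your proposal is correct and is exactly what the paper intends: the paper proves Theorem~\ref{T45} and then states that Theorems~\ref{T46} and~\ref{T47} ``can be similarly verified,'' which is precisely your adaptation of the wedge-cover argument with $\overline{\Omega}$, $\overline{\mathcal{O}^{wgp}}$ replaced by $\Omega^D$, ${\mathcal{O}^{wgp}}^D$ and the dense witness $Y$ carried through. Your closing observation that the single dense set $Y$ from ${\mathcal{O}^{wgp}}^D$ also yields the $\Omega_D$ clause (via picking points of $Y$ in each open set of a finite collection) is the right bookkeeping and completes the verification.
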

%

Note that if a space $X$ has the Alster property, then $X$ is Scheepers. A similar observation for the weaker versions is presented in the next result.


\begin{Th}
\label{T43}
\hfill
\begin{enumerate}[wide=0pt,label={\upshape(\arabic*)},
ref={\theTh(\arabic*)},leftmargin=*]
  \item\label{T4301} Every almost Alster space is $\aS$.
  \item\label{T4302} Every weakly Alster space is $\wS$.
\end{enumerate}
\end{Th}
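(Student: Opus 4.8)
The plan is to mirror the classical argument that every Alster space is Scheepers, replacing ``cover'' by ``cover by closures'' in part (1) and by ``dense union'' in part (2). I first treat (1). Let $(\mathcal{U}_n)$ be a sequence of open covers of $X$. Since the conclusion concerns the selection principle $\Uf(\mathcal{O},\overline{\Omega})$, I may assume each $\mathcal{U}_n$ is closed under finite unions (a finite subfamily of the finite-union-closure corresponds to a finite subfamily of $\mathcal{U}_n$ with the same union) and that $X\notin\mathcal{U}_n$ (otherwise the selection is trivial). The central construction is the $G_\delta$-family
\[\mathcal{W}=\Big\{\,\textstyle\bigcap_{n\in\mathbb{N}}f(n) : f(n)\in\mathcal{U}_n\ \text{for each}\ n\,\Big\}.\]
Each member is a $G_\delta$ set, and for every compact $C\subseteq X$ one may pick, for each $n$, a single member of $\mathcal{U}_n$ containing $C$ (possible since $\mathcal{U}_n$ is an open cover closed under finite unions and $C$ is compact), whence $C\subseteq\bigcap_n f(n)\in\mathcal{W}$. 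Together with $X\notin\mathcal{W}$ this shows $\mathcal{W}\in\mathcal{G}_K$.

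Next I would apply the almost Alster property to $\mathcal{W}$ to obtain a countable $\{G_m:m\in\mathbb{N}\}\subseteq\mathcal{W}$, say $G_m=\bigcap_n f_m(n)$, with $\bigcup_m\overline{G_m}=X$. The key preparatory observation is that $\mathcal{W}$ is directed: for $f,g$ the function $n\mapsto f(n)\cup g(n)$ again lies in $\prod_n\mathcal{U}_n$, and the corresponding member of $\mathcal{W}$ contains $G_f\cup G_g$. Using this I would pass to functions $g_j\in\prod_n\mathcal{U}_n$ with $f_1(n)\cup\dots\cup f_j(n)\subseteq g_j(n)$, so that $G_{g_j}\supseteq\bigcup_{m\le j}G_{f_m}$ and, taking closures, the family $\{\overline{G_{g_j}}:j\in\mathbb{N}\}$ acquires the defining property of $\overline{\Omega}$ for closures: for every finite $F\subseteq X$ there is $j_0$ with $F\subseteq\overline{G_{g_{j_0}}}$.

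Finally comes the diagonalization, which I expect to be the main obstacle. I would set $\mathcal{V}_n=\{g_j(n):j\le n\}$, a finite subset of $\mathcal{U}_n$. Given finite $F\subseteq X$, fix $j_0$ as above; since $G_{g_{j_0}}=\bigcap_n g_{j_0}(n)\subseteq g_{j_0}(n)$ for every $n$, one gets $F\subseteq\overline{G_{g_{j_0}}}\subseteq\overline{g_{j_0}(n)}\subseteq\overline{\cup\mathcal{V}_n}$ for every $n\ge j_0$, which already yields $\{\cup\mathcal{V}_n:n\in\mathbb{N}\}\in\overline{\Omega}$ (in fact $\overline{\Gamma}$). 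The delicate point is precisely the interaction of the closure operator with the countable intersection $\bigcap_n g_{j_0}(n)$: only the one-sided inclusion $\overline{\bigcap_n g_{j_0}(n)}\subseteq\overline{g_{j_0}(n)}$ is available, and it is this inclusion together with the directedness of $\mathcal{W}$ that makes the scheme run. For (2) I would repeat the construction verbatim, invoke the weakly Alster property to get $\{G_m\}$ with $\bigcup_m G_m$ dense, and note that for any finite collection $\mathcal{F}$ of nonempty open sets each $V\in\mathcal{F}$ meets some $G_{f_m}$ (as $V$ meets the dense set $\bigcup_m G_m$); choosing $j_0$ as the maximum of the corresponding indices gives $V\cap g_{j_0}(n)\supseteq V\cap G_{g_{j_0}}\neq\emptyset$ for all $n\ge j_0$ and all $V\in\mathcal{F}$, so $\{\cup\mathcal{V}_n:n\in\mathbb{N}\}\in\Omega_D$ and $X$ is $\wS$.
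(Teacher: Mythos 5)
Your argument is correct, but it takes a genuinely different route from the paper's. The paper proves Theorem~\ref{T43} through finite powers: it partitions $\mathbb{N}$ into infinitely many infinite blocks $N_k$, forms covers of $X^k$ by $k$-th powers of elements of the $\mathcal{U}_n$ with $n\in N_k$, builds a member of $\mathcal{G}_K$ on $X^k$ from the intersections along each block, extracts a countable subfamily whose closures cover $X^k$ (factoring it, somewhat delicately, as a product $\mathcal{C}_1\times\cdots\times\mathcal{C}_k$ of selections in $X$), and then a finite subset of $X$ of size $k$ is captured by the diagonal point of $X^k$ inside the block $N_k$. You instead stay in $X$: you form the single family $\mathcal{W}$ of all intersections $\bigcap_n f(n)$, verify $\mathcal{W}\in\mathcal{G}_K$, and use the fact that $\mathcal{W}$ is directed (because each $\mathcal{U}_n$ may be taken closed under finite unions) to upgrade the countable almost-Alster selection to an increasing one, so that a finite set whose points lie in closures of various selected $G_\delta$'s lies in the closure of a single larger one; the diagonalization $\mathcal{V}_n=\{g_j(n):j\le n\}$ then finishes via the one-sided inclusion $\overline{\bigcap_m g_{j_0}(m)}\subseteq\overline{g_{j_0}(n)}$. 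Your route is shorter, avoids the product-factorization step, and in fact yields more than claimed: since $F\subseteq\overline{\cup\mathcal{V}_n}$ for \emph{all} $n\ge j_0$, you obtain the $\overline{\Gamma}$-type conclusion, i.e.\ that almost Alster spaces are $\aH$ (and weakly Alster spaces are $\wH$), which implies $\aS$ (respectively $\wS$) by Figure~\ref{dig1}. The paper's finite-power scheme is the one that parallels the classical ``Alster implies Scheepers'' proof and remains available when the directedness trick cannot be applied; here the directedness trick is legitimate and your proof goes through.
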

\begin{proof}
We only provide proof for the `almost version'.
Let $X$ be almost Alster. To show that $X$ is $\aS$ we choose a sequence $(\mathcal{U}_n)$ of open covers of $X$. We may assume that for each $n$ $\mathcal{U}_n$ is closed for finite unions. Let $\{N_k : k\in\mathbb{N}\}$ be a partition of $\mathbb{N}$ into infinite sets. For each $k$ and each $n\in N_k$ choose $\mathcal{W}_n=\{U^k : U\in\mathcal{U}_n\}$. Thus for each $k$ $(\mathcal{W}_n : n\in N_k)$ is a sequence of open covers of $X^k$. Fix $k$. Let $\mathcal{U}=\{\cap_{n\in N_k}W_n : W_n\in\mathcal{W}_n\}$. Obviously $\mathcal{U}\in\mathcal{G}_K$ for $X^k$. Without loss of generality we suppose that $\mathcal{U}=\mathcal{H}_1\times\mathcal{H}_2\times\cdots\times\mathcal{H}_k$, where each $\mathcal{H}_i\in\mathcal{G}_K$ for $X$. Using the almost Alster property of $X$, for each $1\leq i\leq k$, we obtain a countable set $\mathcal{C}_i\subseteq\mathcal{H}_i$ such that $\{\overline{G} : G\in\mathcal{C}_i\}$ covers $X$ and hence there is a countable set $\mathcal{V}=\mathcal{C}_1\times\mathcal{C}_2\times\cdots\times\mathcal{C}_k\subseteq\mathcal{U}$ such that $\{\overline{V} : V\in\mathcal{V}\}$ covers $X^k$. Say $\mathcal{V}=\{\cap_{n\in N_k}W_n^{(m)} : W_n^{(m)}\in\mathcal{W}_n, m\in N_k\}$. Choose $\mathcal{V}^\prime=\{W_n^{(n)}\in\mathcal{W}_n : n\in N_k \}$. Now $\cup\mathcal{V}\subseteq\cup\mathcal{V}^\prime$ and consequently $\{\overline{V} : V\in\mathcal{V}^\prime\}$ covers $X^k$. For each $n\in N_k$ let $\mathcal{V}_n=\{U\in\mathcal{U}_n : U^k\in\mathcal{V}^\prime\}$. The sequence $(\mathcal{V}_n)$ witnesses that $X$ is $\aS$.

\end{proof}

In the context of paracompact spaces as well as regular spaces, the $\aS$ property and the Scheepers property are equivalent. In \cite[Proposition 2.3]{MTCP}, the authors answered the question \cite[Question 13]{WMP} that under what conditions the $\aM$ and $\wM$ properties are equivalent. We now present Theorem~\ref{T44}, which is an improvement of \cite[Proposition 2.3]{MTCP}. First we need the following lemma which is itself a refinement of \cite[Lemma 55]{WCPSP}, \cite[Proposition 2.3]{MTCP} and \cite[Proposition 2.4]{MTCP}.
\begin{Lemma}
\label{L7}
Every $\wL$ $P$-space $X$ is almost Alster.
\end{Lemma}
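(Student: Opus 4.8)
The plan is to use the two defining features of a $P$-space: every $G_\delta$ subset is open, and consequently every countable intersection of open sets is again open. Let $\mathcal{U}\in\mathcal{G}_K$ be given. Since each member of $\mathcal{U}$ is a $G_\delta$ set, the $P$-space hypothesis makes every member of $\mathcal{U}$ open. Moreover, as every singleton $\{x\}\subseteq X$ is compact, the defining property of $\mathcal{G}_K$ yields a $U\in\mathcal{U}$ with $x\in U$, so $\mathcal{U}$ is in fact an open cover of $X$. (Note that the full strength of $\mathcal{G}_K$, or the fact that compact subsets of a $P$-space are finite, is not needed here — only that singletons are compact.)

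First I would apply the $\wL$ property to the open cover $\mathcal{U}$ to extract a countable subfamily $\mathcal{V}=\{V_n : n\in\mathbb{N}\}\subseteq\mathcal{U}$ whose union $\cup\mathcal{V}$ is dense in $X$. Recalling that being almost Alster amounts to the selection principle $\S1(\mathcal{G}_K,\overline{\mathcal{G}})$, it then remains only to upgrade this density to the statement that $\{\overline{V} : V\in\mathcal{V}\}$ covers $X$, i.e. $\bigcup_{n}\overline{V_n}=X$.

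This last passage — from ``the union is dense'' to ``the closures cover'' — is the crux of the argument, and it is exactly where the $P$-space hypothesis enters a second time. Suppose, for contradiction, that some $x\in X$ lies in no $\overline{V_n}$. Then each $X\setminus\overline{V_n}$ is an open neighbourhood of $x$, and since $X$ is a $P$-space the countable intersection $W=\bigcap_{n\in\mathbb{N}}(X\setminus\overline{V_n})$ is again an open neighbourhood of $x$. By construction $W\cap V_n=\emptyset$ for every $n$, so $W$ is a nonempty open set disjoint from $\cup\mathcal{V}$, contradicting the density of $\cup\mathcal{V}$. Hence every $x\in X$ belongs to some $\overline{V_n}$, the family $\{\overline{V}:V\in\mathcal{V}\}$ covers $X$, and $X$ is almost Alster. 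The main obstacle is precisely this density-to-covering upgrade: without the $P$-space property a dense union of countably many open sets need not have its closures cover the space, so the ``countable intersection of opens is open'' feature is doing the essential work, and it is what distinguishes this refinement from the earlier $\wL$-style arguments.
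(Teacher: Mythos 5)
Your proof is correct and follows essentially the same route as the paper's: the paper notes that in a $P$-space the countable union $\cup\{\overline{V}:V\in\mathcal{V}\}$ is closed and, containing the dense set $\cup\mathcal{V}$, must equal $X$, which is just the De Morgan dual of your observation that $\bigcap_n(X\setminus\overline{V_n})$ is open and must therefore be empty. The only cosmetic difference is your contradiction framing versus the paper's direct one.
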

\begin{proof}
Let $\mathcal{U}\in\mathcal{G}_K$. Since $X$ is a $P$-space, $\mathcal{U}$ is an open cover of $X$. Also since $X$ is $\wL$, there is a countable set $\mathcal{V}\subseteq\mathcal{U}$ such that $\cup\mathcal{V}$ is dense in $X$. Again by the property of a $P$-space we can say that the set $\cup\{\overline{V} : V\in\mathcal{V}\}$ is closed in $X$. Since $\overline{\cup\mathcal{V}}$ is the smallest closed set in $X$ containing $\cup\mathcal{V}$, $\overline{\cup\mathcal{V}}\subseteq \cup\{\overline{V} : V\in\mathcal{V}\}$ and hence $\cup\{\overline{V} : V\in\mathcal{V}\}=X$. Thus $X$ is almost Alster.
\end{proof}

Thus we obtain the following.
\begin{Th}
\label{T44}
For a $P$-space $X$ the following properties are equivalent.
\begin{multicols}{2}
\begin{enumerate}[wide=0pt,label={\upshape(\arabic*)},leftmargin=*]
  \item $\aL$
  \item $\aM$
  \item $\aS$
  \item almost Alster
  \item $\wL$
  \item $\wM$
  \item $\wS$
  \item weakly Alster.
\end{enumerate}
\end{multicols}
\end{Th}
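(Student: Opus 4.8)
The plan is to assemble two short implication cycles that overlap, and to route the $P$-space hypothesis through a single step. Almost every arrow I need is already recorded either in Figure~\ref{dig1} or in the results preceding this theorem, so the proof reduces to bookkeeping: the only place where the $P$-space assumption is genuinely used is Lemma~\ref{L7}.

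First I would establish the ``almost'' cycle
\[ (5)\Rightarrow(4)\Rightarrow(3)\Rightarrow(2)\Rightarrow(1)\Rightarrow(5). \]
Here $(5)\Rightarrow(4)$, that is, $\wL\Rightarrow$ almost Alster, is exactly Lemma~\ref{L7} and is the sole step invoking the $P$-space hypothesis. The implication $(4)\Rightarrow(3)$ (almost Alster $\Rightarrow\aS$) is Theorem~\ref{T4301}, while $(3)\Rightarrow(2)$ ($\aS\Rightarrow\aM$), $(2)\Rightarrow(1)$ ($\aM\Rightarrow\aL$) and $(1)\Rightarrow(5)$ ($\aL\Rightarrow\wL$) are the unconditional implications displayed in Figure~\ref{dig1}, valid in any space (for $\aS\Rightarrow\aM$ one just notes that a witness $\{\cup\mathcal V_n:n\in\mathbb N\}\in\overline{\Omega}$ forces $\bigcup_n\mathcal V_n\in\overline{\mathcal O}$, since each $\mathcal V_n$ is finite; the remaining two are obtained by applying the selection to a constant sequence). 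This cycle already yields $(1)\Leftrightarrow(2)\Leftrightarrow(3)\Leftrightarrow(4)\Leftrightarrow(5)$.

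Next I would splice in the ``weak'' properties through the chain
\[ (4)\Rightarrow(8)\Rightarrow(7)\Rightarrow(6)\Rightarrow(5). \]
The step $(4)\Rightarrow(8)$ (almost Alster $\Rightarrow$ weakly Alster) is the only arrow here not taken verbatim from the diagram or a cited theorem, but it is immediate from the containment $\overline{\mathcal{O}}\subseteq\mathcal{O}_D$ recorded in the Preliminaries: a countable $\mathcal V\subseteq\mathcal U$ with $\{\overline{V}:V\in\mathcal V\}$ covering $X$ automatically has $\cup\mathcal V$ dense, so any witness for $\S1(\mathcal{G}_K,\overline{\mathcal{G}})$ is a witness for $\S1(\mathcal{G}_K,\mathcal{G}_D)$. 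Then $(8)\Rightarrow(7)$ (weakly Alster $\Rightarrow\wS$) is Theorem~\ref{T4302}, and $(7)\Rightarrow(6)$ ($\wS\Rightarrow\wM$) and $(6)\Rightarrow(5)$ ($\wM\Rightarrow\wL$) are again the unconditional implications of Figure~\ref{dig1}. Since $(5)\Rightarrow(4)$ was secured above, this chain closes into a cycle $(4)\Rightarrow(8)\Rightarrow(7)\Rightarrow(6)\Rightarrow(5)\Rightarrow(4)$, giving $(4)\Leftrightarrow(5)\Leftrightarrow(6)\Leftrightarrow(7)\Leftrightarrow(8)$. Because the two cycles share the nodes $(4)$ and $(5)$, all eight properties are equivalent.

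As for the main obstacle, there is essentially no computation internal to this theorem: the substantive content has been packaged into Lemma~\ref{L7} and Theorems~\ref{T4301}--\ref{T4302}. The only points demanding care are (i) verifying that each arrow I invoke is genuinely available with \emph{no} regularity or separation hypothesis, and in particular that $\aS\Rightarrow\aM\Rightarrow\aL$ and $\wS\Rightarrow\wM\Rightarrow\wL$ are the unconditional vertical and horizontal implications of Figure~\ref{dig1} rather than the regularity-dependent ones, and (ii) confirming that the $P$-space assumption enters \emph{only} through the reverse arrow $\wL\Rightarrow$ almost Alster. Arranging the two cycles so that they overlap exactly in $(4)$ and $(5)$ is precisely what makes the whole list collapse to a single equivalence class.
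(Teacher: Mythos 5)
Your proposal is correct and matches the paper's intended argument: the paper derives Theorem~\ref{T44} directly from Lemma~\ref{L7} (the sole use of the $P$-space hypothesis) together with Theorem~\ref{T43} and the unconditional implications of Figure~\ref{dig1} and Figure~\ref{dig2}, exactly as you assemble them. Your one extra verification, that almost Alster implies weakly Alster via $\overline{\mathcal{O}}\subseteq\mathcal{O}_D$, is also already recorded as the arrow $\S1(\mathcal{G}_K,\overline{\mathcal{G}})\to\S1(\mathcal{G}_K,\mathcal{G}_D)$ in Figure~\ref{dig2}.
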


\begin{Cor}
\label{C28}
Every $\wSk$ $P$-space is almost Alster (and hence $\aS$).
\end{Cor}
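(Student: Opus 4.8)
The plan is to reduce the corollary to Lemma~\ref{L7} by first observing that the $\wSk$ property already forces weak Lindel\"{o}fness. The implication diagram (Figure~\ref{dig1}) records the chain $\wSk\Rightarrow\wS\Rightarrow\wM\Rightarrow\wL$, so every $\wSk$ space is $\wL$; in particular a $\wSk$ $P$-space is a $\wL$ $P$-space. Once this is in hand, Lemma~\ref{L7} yields directly that $X$ is almost Alster, and Theorem~\ref{T4301} then supplies the parenthetical `hence $\aS$'. So the whole argument is a short concatenation of results already available in the excerpt, and the only point that genuinely needs to be checked is the passage $\wSk\Rightarrow\wL$.

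For that passage I would give the direct one-cover argument rather than leaning on the diagram. Let $\mathcal{U}$ be an arbitrary open cover of $X$ and apply the $\wSk$ property (in the equivalent form stated after Definition~\ref{D3}) to the constant sequence $\mathcal{U}_n=\mathcal{U}$. This produces a dense set $Y\subseteq X$ together with finite subsets $\mathcal{V}_n\subseteq\mathcal{U}$ such that each finite $F\subseteq Y$ is contained in $\cup\mathcal{V}_n$ for infinitely many $n$. Put $\mathcal{V}=\bigcup_{n\in\mathbb{N}}\mathcal{V}_n$, a countable subfamily of $\mathcal{U}$. Taking $F=\{y\}$ for $y\in Y$ shows $y\in\cup\mathcal{V}_n\subseteq\cup\mathcal{V}$ for some $n$, hence $Y\subseteq\cup\mathcal{V}$; since $Y$ is dense, $\cup\mathcal{V}$ is dense, and $X$ is $\wL$. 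I prefer this explicit form because it keeps the corollary self-contained and does not require the reader to chase three separate arrows through the figure.

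There is no real obstacle here: the $P$-space hypothesis is used only inside Lemma~\ref{L7} (to guarantee that the witnessing $G_\delta$ cover is genuinely open and that $\cup\{\overline{V}:V\in\mathcal{V}\}$ is closed, so that density upgrades to a cover by closures), and I would simply invoke the lemma rather than reprove it. If anything, the one subtlety worth flagging is that $\wSk$ is a selection principle about $G_\delta$-type covers only in its Alster incarnation, whereas here it is applied to an ordinary open cover; the constant-sequence reduction above sidesteps this cleanly, so I expect the proof to be essentially immediate.
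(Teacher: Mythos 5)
Your argument is correct and follows the same route the paper intends: $\wSk\Rightarrow\wL$ (your constant-sequence argument for this is valid), then Lemma~\ref{L7} gives almost Alster, and Theorem~\ref{T4301} gives $\aS$. The paper derives the corollary from Theorem~\ref{T44} (itself a consequence of Lemma~\ref{L7}) in exactly this way, so there is nothing to add.
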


Combining \cite[Corollary 2.5]{MTCP} and Theorem~\ref{T26} we have the following.
\begin{Cor}
\label{C29}
For a regular $P$-space $X$ the following properties are equivalent.
\begin{multicols}{3}
\begin{enumerate}[wide=0pt,label={\upshape(\arabic*)},leftmargin=*]
  \item Lindel\"{o}f
  \item Menger
  \item Scheepers
  \item $\aL$
  \item $\aM$
  \item $\aS$
  \item almost Alster
  \item $\wL$
  \item $\wM$
  \item $\wS$
  \item weakly Alster
  \item $\wSk$.
\end{enumerate}
\end{multicols}
\end{Cor}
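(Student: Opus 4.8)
The plan is to treat the twelve conditions as a single equivalence class assembled from already-established blocks, and then to fold in the two ``corner'' properties, namely the Scheepers property $(3)$ and $\wSk$ $(12)$, which are not covered by the main equivalence theorem.

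First I would invoke Theorem~\ref{T44}: for any $P$-space the eight properties $\aL$, $\aM$, $\aS$, almost Alster, $\wL$, $\wM$, $\wS$ and weakly Alster---that is, items $(4)$--$(11)$---are mutually equivalent. Next I would bring in the classical Lindel\"{o}f $(1)$ and Menger $(2)$ properties by appealing to \cite[Corollary 2.5]{MTCP}, which for a regular $P$-space identifies Lindel\"{o}f and Menger with the almost and weak Lindel\"{o}f, Menger and Alster conditions already present in the block coming from Theorem~\ref{T44}. After this step items $(1)$, $(2)$ and $(4)$--$(11)$ all lie in one equivalence class, so only $(3)$ and $(12)$ remain to be attached.

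For the Scheepers property $(3)$, the implication Scheepers $\Rightarrow \aS$ is given by the horizontal arrow in Figure~\ref{dig1}, while the reverse $\aS \Rightarrow$ Scheepers holds because $X$ is regular (the regularity observation recorded just after Figure~\ref{dig1}, of which Theorem~\ref{T26} is the paracompact companion). Thus Scheepers is equivalent to $\aS$ and joins the class. For $\wSk$ $(12)$, Corollary~\ref{C28} gives that every $\wSk$ $P$-space is almost Alster, so $\wSk$ implies a member of the class; conversely Scheepers $\Rightarrow \wSk$ by Figure~\ref{dig1}, and Scheepers now lies in the class. Hence $\wSk$ is caught between two members of the class and is equivalent to them, so all twelve conditions are equivalent.

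The argument amounts to closing a few short implication loops, so there is no deep obstacle; the real content resides in the cited results. The only points deserving care are that the equivalence of $\aS$ and Scheepers used for $(3)$ rests on regularity rather than paracompactness (so one cites the post-Figure~\ref{dig1} observation, not Theorem~\ref{T26} verbatim, since a regular $P$-space need not be paracompact), and that $\wSk$ must be reached indirectly---through almost Alster via Corollary~\ref{C28} and through Scheepers---as it is the single property not delivered directly by Theorem~\ref{T44}.
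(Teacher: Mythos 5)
Your argument is correct and follows essentially the same route as the paper, whose proof is just the one-line remark that the corollary follows by combining \cite[Corollary 2.5]{MTCP} with Theorem~\ref{T26} (together with Theorem~\ref{T44} and Corollary~\ref{C28} for the remaining items). The only cosmetic difference is that for $\aS\Rightarrow$ Scheepers you invoke the regularity observation recorded after Figure~\ref{dig1} rather than Theorem~\ref{T26}; the paper's citation is also legitimate, since once Lindel\"{o}fness is established via \cite[Corollary 2.5]{MTCP}, regularity yields paracompactness and Theorem~\ref{T26} applies.
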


\subsection{Results concerning cardinalities}
For a set $Y\subseteq\mathbb{N}^\mathbb{N}$, $\maxfin(Y)$ is defined as $\maxfin(Y)=\{\max\{f_1,f_2,\ldots,f_k\} : f_1,f_2,\ldots,f_k\in Y\;\text{and}\;k\in\mathbb{N}\}$, where $\max\{f_1,f_2,\ldots,f_k\}\in\mathbb{N}^\mathbb{N}$ is given by \[\max\{f_1,f_2,\ldots,f_k\}(n)=\max\{f_1(n),f_2(n),
\ldots,f_k(n)\}\] for all $n\in\mathbb{N}$.

\begin{Th}
\label{T18}
Every $\aL$ space $X$ with cardinality less than $\mathfrak{d}$ is $\aS$.
\end{Th}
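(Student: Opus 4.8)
The plan is to use the equivalent finite-set formulation of $\aS$ recorded just after Definition~\ref{D3}: it suffices to produce, for a given sequence $(\mathcal{U}_n)$ of open covers, finite sets $\mathcal{V}_n\subseteq\mathcal{U}_n$ so that every finite $F\subseteq X$ satisfies $F\subseteq\overline{\cup\mathcal{V}_n}$ for infinitely many $n$. The driving idea is the classical ``cardinality $<\mathfrak d$'' argument, but indexed by finite subsets of $X$ rather than by single points, so that the resulting selection lands in $\overline{\Omega}$ and not merely in $\overline{\Lambda}$.

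First I would exploit the $\aL$ property. For each $n$, applying the $\aL$ property to $\mathcal{U}_n$ yields a countable family $\{V^{(n)}_m : m\in\mathbb{N}\}\subseteq\mathcal{U}_n$ whose closures cover $X$. Passing to the partial unions $W^{(n)}_m=\bigcup_{j\le m}V^{(n)}_j$ makes the sequence increasing, with $\overline{W^{(n)}_m}=\bigcup_{j\le m}\overline{V^{(n)}_j}$ increasing to $X$; crucially each $W^{(n)}_m$ is a finite union of members of $\mathcal{U}_n$, so selecting an index $m$ corresponds to choosing an honest finite subfamily of $\mathcal{U}_n$.

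Next, for every finite $F\subseteq X$ define $f_F\in\mathbb{N}^{\mathbb{N}}$ by $f_F(n)=\min\{m : F\subseteq\overline{W^{(n)}_m}\}$, which is well defined because the increasing closures exhaust $X$ and $F$ is finite. Since $|[X]^{<\omega}|=\max\{|X|,\aleph_0\}<\mathfrak d$, the family $\{f_F : F\in[X]^{<\omega}\}$ is not dominating, so there is $g\in\mathbb{N}^{\mathbb{N}}$ with $g\not\le^* f_F$ for every $F$; equivalently $f_F(n)<g(n)$, hence $f_F(n)\le g(n)$, for infinitely many $n$. Setting $\mathcal{V}_n=\{V^{(n)}_1,\dots,V^{(n)}_{g(n)}\}$ gives a finite subset of $\mathcal{U}_n$ with $\cup\mathcal{V}_n=W^{(n)}_{g(n)}$, and for each $F$ the monotonicity yields $F\subseteq\overline{W^{(n)}_{f_F(n)}}\subseteq\overline{W^{(n)}_{g(n)}}=\overline{\cup\mathcal{V}_n}$ for those infinitely many $n$. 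This is precisely the $\aS$ condition.

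I expect the only genuine obstacle to be the upgrade from single points to finite sets: the naive version, defining $f_x$ for $x\in X$, only produces a member of $\overline{\Lambda}$ (a ``large'' selection) rather than of $\overline{\Omega}$. The remedy is to index the dominating argument by $[X]^{<\omega}$ and to invoke $|[X]^{<\omega}|=|X|<\mathfrak d$, together with the monotonicity of the $W^{(n)}_m$, so that a single bound $g(n)$ simultaneously captures all points of any prescribed finite set. Everything else is routine bookkeeping.
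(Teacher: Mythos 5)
Your proof is correct and follows essentially the same route as the paper: extract countable subfamilies with closures covering $X$ via $\aL$, associate to finite subsets of $X$ functions in $\mathbb{N}^{\mathbb{N}}$ recording how far into the enumeration one must go, and use that a family of size $<\mathfrak d$ is not dominating to pick the bounding function $g$. The paper packages the "index by finite sets" step through the $\maxfin$ operator applied to the point-indexed functions $f_x$, while you index by $[X]^{<\omega}$ directly and use monotone partial unions, but these are the same argument.
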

\begin{proof}
 Let $(\mathcal{U}_n)$ be a sequence of open covers of $X$. For each $n$ choose  a countable set $\mathcal{W}_n=\{V_m^{(n)} : m\in\mathbb{N}\}\subseteq\mathcal{U}_n$ such that $\cup_{V\in\mathcal{W}_n}\overline{V}=X$. Now for each $x\in X$  define a $f_x\in\mathbb{N}^\mathbb{N}$ by $f_x(n)=\min\{m\in\mathbb{N} : x\in\overline{V_m^{(n)}}\}$, $n\in\mathbb{N}$. Since the cardinality of $Y=\{f_x : x\in X\}$ is less than $\mathfrak{d}$, $\maxfin(Y)$ is also of cardinality less than $\mathfrak{d}$. Consequently there are a $g\in\mathbb{N}^\mathbb{N}$ and a $n_F\in\mathbb{N}$ corresponding to each finite set $F\subseteq X$ such that $f_F(n_F)<g(n_F)$ with $f_F\in\maxfin(Y)$, where $f_F(n)=\max\{f_x(n) : x\in F\}$ for all $n\in\mathbb{N}$. We use the convention that if $F=\{x\}$, $x\in X$, then we write $f_x$ instead of $f_F$. Observe that for each $n$ $\mathcal{V}_n=\{V_m^{(n)} : m\leq g(n)\}$ is a finite subset of $\mathcal{U}_n$. We claim that the sequence $(\mathcal{V}_n)$ witnesses for $X$ to be $\aS$. To see this, let $F$ be a finite subset of $X$. Now choose a $n_F\in\mathbb{N}$ and a $f_F\in\maxfin(Y)$ with $f_F(n)=\max\{f_x(n) : x\in F\}$ for all $n\in\mathbb{N}$ such that $f_F(n_F)<g(n_F)$. From the construction of $Y$, it follows that each $x\in F$ belongs to $\overline{V_{f_x(n)}^{(n)}}$ for all $n\in\mathbb{N}$. Consequently  $f_x(n_F)<g(n_F)$ for each $x\in F$ and hence $F\subseteq\overline{\cup\mathcal{V}_{n_F}}$. This completes the proof.
\end{proof}

\begin{Th}
\label{T20}
Let $X$ be Lindel\"{o}f and $\kappa<\mathfrak d$. If $X$ is a union of $\kappa$ many $\aH$ spaces, then $X$ is $\aS$.
\end{Th}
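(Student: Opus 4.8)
The plan is to adapt the argument of Theorem~\ref{T18}: there the key functions $f_x$ came from individual points, whereas here I will extract one function $f_\alpha$ from the almost Hurewicz property of each piece $X_\alpha$, and then defeat the whole family of these functions at once by passing to $\maxfin$. First I would reduce to countable increasing covers. Given a sequence $(\mathcal{U}_n)$ of open covers of $X$, the Lindel\"{o}f property lets me pick for each $n$ a countable subfamily $\{W_m^{(n)} : m\in\mathbb{N}\}\subseteq\mathcal{U}_n$ that still covers $X$, and then set $U_m^{(n)}=\bigcup_{i\leq m}W_i^{(n)}$, so that $U_1^{(n)}\subseteq U_2^{(n)}\subseteq\cdots$ with $\bigcup_m U_m^{(n)}=X$. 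Since each $U_m^{(n)}$ is the union of the finite subfamily $\{W_1^{(n)},\dots,W_m^{(n)}\}$ of $\mathcal{U}_n$, selecting one index $m(n)$ per $n$ produces a legitimate sequence of finite subsets of the $\mathcal{U}_n$.

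Next comes the encoding step. Write $X=\bigcup_{\alpha<\kappa}X_\alpha$ with each $X_\alpha$ being $\aH$. Fix $\alpha$; the families $\{U_m^{(n)}\cap X_\alpha : m\in\mathbb{N}\}$ are (increasing) open covers of $X_\alpha$, so the property $\Uf(\mathcal{O},\overline{\Gamma})$ of $X_\alpha$ yields for each $n$ a finite subfamily whose union is, by monotonicity, some $U_{f_\alpha(n)}^{(n)}\cap X_\alpha$, and these form a $\overline{\Gamma}$-cover of $X_\alpha$. This defines $f_\alpha\in\mathbb{N}^\mathbb{N}$ such that each $x\in X_\alpha$ lies in $\overline{U_{f_\alpha(n)}^{(n)}}$ (closure in $X$) for all but finitely many $n$; here one uses that the subspace closure only enlarges when computed in $X$, so the exceptional set of $n$ remains finite.

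Finally the combinatorial step. The family $Y=\{f_\alpha : \alpha<\kappa\}$ has cardinality $<\mathfrak{d}$, hence so does $\maxfin(Y)$, which is therefore not dominating; choose $g\in\mathbb{N}^\mathbb{N}$ with $g\not\leq^* h$ for every $h\in\maxfin(Y)$. Put $\mathcal{V}_n=\{W_1^{(n)},\dots,W_{g(n)}^{(n)}\}$, a finite subset of $\mathcal{U}_n$ with $\cup\mathcal{V}_n=U_{g(n)}^{(n)}$. To verify $\aS$ in the ``infinitely many $n$'' reformulation, take a finite $F=\{x_1,\dots,x_k\}\subseteq X$, choose $\alpha_i$ with $x_i\in X_{\alpha_i}$, and set $h=\max\{f_{\alpha_1},\dots,f_{\alpha_k}\}\in\maxfin(Y)$. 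Then $g(n)>h(n)\geq f_{\alpha_i}(n)$ for infinitely many $n$, while for all but finitely many $n$ each $x_i\in\overline{U_{f_{\alpha_i}(n)}^{(n)}}$; for the infinitely many $n$ where both hold, monotonicity gives $x_i\in\overline{U_{f_{\alpha_i}(n)}^{(n)}}\subseteq\overline{U_{g(n)}^{(n)}}=\overline{\cup\mathcal{V}_n}$ for every $i$, so $F\subseteq\overline{\cup\mathcal{V}_n}$ for infinitely many $n$, as required.

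I expect the passage to $\maxfin(Y)$ to be the crux, and I would flag precisely why it is indispensable. If one only exploited that $Y$ itself is non-dominating, one would obtain, for each single point, a $g$ beating its $f_\alpha$ infinitely often; but distinct points of a finite set $F$ may lie in distinct pieces $X_{\alpha_i}$, and the infinite sets of ``good'' indices $n$ for different pieces need not meet in an infinite set. Forcing $g$ to escape $\max\{f_{\alpha_1},\dots,f_{\alpha_k}\}$ produces a \emph{single} infinite set of $n$ on which $g$ dominates all the relevant $f_{\alpha_i}$ simultaneously, and this is exactly what upgrades the per-piece $\overline{\Gamma}$ (``eventually'') guarantees into the $\overline{\Omega}$ (``all finite sets, infinitely often'') guarantee on $X$.
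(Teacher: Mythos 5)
Your proof is correct and follows essentially the same route as the paper's: encode the almost Hurewicz selection on each piece $X_\alpha$ as a function $f_\alpha\in\mathbb{N}^\mathbb{N}$ via a fixed countable enumeration of the covers, use that $\maxfin(\{f_\alpha:\alpha<\kappa\})$ has cardinality $<\mathfrak d$ and hence is not dominating to find a single $g$ escaping every finite max infinitely often, and combine the ``eventually'' guarantee from $\overline{\Gamma}$ with the infinitely many escape indices. The only differences are cosmetic (you pre-process the covers into increasing unions, and you spell out the passage from closures in $X_\alpha$ to closures in $X$, which the paper leaves implicit).
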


\begin{proof}
Let  $X=\cup_{\alpha<\kappa} X_\alpha$ where each $X_\alpha$ is $\aH$ and $\kappa<\mathfrak d$. Choose a sequence $(\mathcal{U}_n)$ of open covers of $X$ and without loss of generality assume that $\mathcal{U}_n=\{U_m^{(n)} : m\in\mathbb{N}\}$ for each $n$. For each $\alpha<\kappa$ choose a sequence $(\mathcal{V}_n^{(\alpha)})$ such that for each $n$ $\mathcal{V}_n^{(\alpha)}$ is a finite subset of $\mathcal{U}_n$ and each $x\in X_\alpha$ belongs to $\overline{\cup\mathcal{V}_n^{(\alpha)}}$ for all but finitely many $n$. Also for each $\alpha<\kappa$  define $f_\alpha:\mathbb{N}\to\mathbb{N}$ by $f_\alpha(n)=\min\{m\in\mathbb{N} : \mathcal{V}_n^{(\alpha)}\subseteq\{U_i^{(n)} : i\leq m\}\}$. If $Y=\{f_\alpha : \alpha<\kappa\}$, then the cardinality of $\maxfin(Y)$ is less than $\mathfrak{d}$. Thus there is a $g\in\mathbb{N}^\mathbb{N}$ such that for each finite subset $A$ of $\kappa$ we have $g\nleq^* f_A$ where $f_A\in\maxfin(Y)$. Clearly  $\mathcal{V}_n=\{U_i^{(n)} : i\leq g(n)\}$ is a finite subset of $\mathcal{U}_n$ for each $n$. We now show that the sequence $(\mathcal{V}_n)$ witnesses that $X$ is $\aS$. Let $F$ be a finite subset of $X$. Choose a finite subset $A$ of $\kappa$ such that $F=\cup_{\alpha\in A}F_\alpha$ with $F_\alpha\subseteq X_\alpha$. For each $\alpha\in A$, choose a $n_\alpha\in\mathbb{N}$ such that $F_\alpha\subseteq \overline{\cup\mathcal{V}_n^{(\alpha)}}$ for all $n\geq n_\alpha$. If we choose $n_0=\max\{n_\alpha : \alpha\in A\}$, then we can find a $n_1\in\mathbb{N}$ such that $n_1>n_0$ and $f_A(n_1)<g(n_1)$. Thus for each $\alpha\in A$, we have $F_\alpha\subseteq\cup_{i\leq f_\alpha(n_1)}\overline{U_i^{(n_1)}}\subseteq\cup_{i\leq f_A(n_1)}\overline{U_i^{(n_1)}}$, i.e. $F\subseteq \overline{\cup\mathcal{V}_{n_1}}$. The conclusion now follows.
\end{proof}

A space $X$ is said to be $H$-closed if each open cover of $X$ contains a finite subfamily whose union is dense in $X$ \cite{Engelking}. Since every $H$-closed space is $\aH$, we have the following.
\begin{Cor}
\label{C16}
Let $X$ be Lindel\"{o}f and $\kappa<\mathfrak d$. If $X$ is a union of $\kappa$ many $H$-closed subspaces, then $X$ is $\aS$.
\end{Cor}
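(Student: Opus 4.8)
The plan is to derive Corollary~\ref{C16} directly from Theorem~\ref{T20}. The statement asserts that if a Lindel\"{o}f space $X$ is the union of $\kappa<\mathfrak d$ many $H$-closed subspaces, then $X$ is $\aS$. Since Theorem~\ref{T20} already gives the conclusion under the hypothesis that $X$ is a union of $\kappa<\mathfrak d$ many $\aH$ subspaces, the only thing that remains is to verify that each of the $H$-closed subspaces in the decomposition is $\aH$; the cardinality bound and the Lindel\"{o}fness of the ambient space $X$ are then passed verbatim into Theorem~\ref{T20}.

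Thus the sole mathematical content to be established is the implication ``$H$-closed $\Rightarrow$ $\aH$'', which the remark preceding the corollary already asserts. To justify it, I would argue as follows. Recall that $\aH$ means the selection principle $\Uf(\mathcal{O},\overline{\Gamma})$ holds, i.e.\ for every sequence $(\mathcal{U}_n)$ of open covers there are finite $\mathcal{V}_n\subseteq\mathcal{U}_n$ so that $\{\cup\mathcal{V}_n : n\in\mathbb N\}\in\overline{\Gamma}$, meaning each $x$ lies in $\overline{\cup\mathcal{V}_n}$ for all but finitely many $n$. Given an $H$-closed subspace $Z$ and a sequence $(\mathcal{U}_n)$ of open covers of $Z$, the $H$-closedness lets me extract, for each $n$, a finite $\mathcal{V}_n\subseteq\mathcal{U}_n$ whose union $\cup\mathcal{V}_n$ is dense in $Z$, so that $\overline{\cup\mathcal{V}_n}=Z$. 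Then every point of $Z$ lies in $\overline{\cup\mathcal{V}_n}$ for \emph{every} $n$, which trivially satisfies the ``all but finitely many $n$'' requirement of $\overline{\Gamma}$. Hence $Z$ is $\aH$.

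With this implication in hand the corollary is immediate: writing $X=\cup_{\alpha<\kappa}Z_\alpha$ with each $Z_\alpha$ $H$-closed, each $Z_\alpha$ is $\aH$ by the paragraph above, and since $X$ is Lindel\"{o}f and $\kappa<\mathfrak d$, Theorem~\ref{T20} applies directly to give that $X$ is $\aS$.

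I do not anticipate a genuine obstacle here, since the result is a one-line specialization of Theorem~\ref{T20}. The only point meriting a moment of care is the verification that $H$-closedness of a subspace yields the $\overline{\Gamma}$-type selection rather than merely the $\overline{\mathcal{O}}$-type (almost Menger) selection; this is handled by the observation that ``dense union'' gives $\overline{\cup\mathcal{V}_n}=Z$ for \emph{each} $n$ separately, which is strictly stronger than the eventual-containment condition defining $\overline{\Gamma}$. Once that is noted, no further work is required.
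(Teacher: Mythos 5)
Your proposal is correct and follows exactly the paper's route: the paper derives Corollary~\ref{C16} from Theorem~\ref{T20} via the single remark that every $H$-closed space is $\aH$, which you verify in the same way (a finite subfamily with dense union has closure equal to the whole subspace, so the $\overline{\Gamma}$ condition holds trivially for every $n$). No gaps.
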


Also since $\aS$ implies $\aM$, we obtain the following result of Di Maio and Ko\v{c}inac as another consequence of the preceding theorem.
\begin{Cor}[\!{\cite[Theorem 2.5]{QM}}]
\label{C17}
Let $X$ be Lindel\"{o}f and $\kappa<\mathfrak d$. If $X$ is a union of $\kappa$ many $H$-closed subspaces, then $X$ is $\aM$.
\end{Cor}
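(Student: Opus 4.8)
The plan is to obtain the statement as an immediate consequence of Theorem~\ref{T20} together with the implication $\aS\Rightarrow\aM$ recorded in Figure~\ref{dig1}. First I would recall that every $H$-closed space is $\aH$: given any sequence of open covers, the $H$-closed property supplies, for each cover, a finite subfamily whose union is dense, so in particular all of $X$ lies in the closure of that union, which is exactly the $\aH$ selection. Hence writing $X=\bigcup_{\alpha<\kappa}X_\alpha$ with each $X_\alpha$ $H$-closed exhibits the Lindel\"of space $X$ as a union of $\kappa<\mathfrak d$ many $\aH$ subspaces, so Theorem~\ref{T20} (equivalently, Corollary~\ref{C16}) applies and yields that $X$ is $\aS$.

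It then remains to pass from $\aS$ to $\aM$. I would verify directly that $\Uf(\mathcal{O},\overline{\Omega})\Rightarrow\Sf(\mathcal{O},\overline{\mathcal{O}})$. Given a sequence $(\mathcal{U}_n)$ of open covers, the $\aS$ property produces finite subfamilies $\mathcal{V}_n\subseteq\mathcal{U}_n$ with $\{\cup\mathcal{V}_n:n\in\mathbb{N}\}\in\overline{\Omega}\subseteq\overline{\mathcal{O}}$, so the sets $\overline{\cup\mathcal{V}_n}$ cover $X$. Since each $\mathcal{V}_n$ is finite, $\overline{\cup\mathcal{V}_n}=\bigcup_{V\in\mathcal{V}_n}\overline{V}$, and therefore $\{\overline{V}:V\in\bigcup_n\mathcal{V}_n\}$ covers $X$ as well; this says exactly that $\bigcup_n\mathcal{V}_n\in\overline{\mathcal{O}}$, so the same sequence $(\mathcal{V}_n)$ witnesses $\Sf(\mathcal{O},\overline{\mathcal{O}})$. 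Thus $X$ is $\aM$.

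There is no genuine obstacle here; the entire combinatorial content has already been discharged in the proof of Theorem~\ref{T20}, where the cardinal bound $\kappa<\mathfrak d$ is used via the $\maxfin$ construction to produce a single dominating function controlling all the finite subfamilies simultaneously. The only point to check beyond the chaining of known implications is the elementary fact that the closure of a finite union equals the union of the closures, which is what converts the $\overline{\Omega}$-membership supplied by $\aS$ into the $\overline{\mathcal{O}}$-membership required by $\aM$.
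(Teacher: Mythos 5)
Your proposal is correct and follows exactly the paper's route: Corollary~\ref{C16} (via $H$-closed $\Rightarrow$ $\aH$ and Theorem~\ref{T20}) gives that $X$ is $\aS$, and then the implication $\aS\Rightarrow\aM$ from Figure~\ref{dig1} finishes the argument. Your explicit verification that $\overline{\Omega}\subseteq\overline{\mathcal{O}}$ and that the closure of a finite union equals the union of the closures is exactly the content the paper leaves implicit.
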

The following result is similar to Theorem~\ref{T20} with necessary modifications.
\begin{Th}
Let $X$ be Lindel\"{o}f and $\kappa<\mathfrak d$.
\begin{enumerate}[wide=0pt,label={\upshape(\arabic*)}]
\item\label{T21} If $X$ is a union of $\kappa$ many $\wH$ spaces, then $X$ is $\wS$.
\item\label{T22} If $X$ is a union of $\kappa$ many $\wHk$ spaces, then $X$ is $\wSk$.
\end{enumerate}
\end{Th}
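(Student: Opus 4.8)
The plan is to mirror the domination argument in the proof of Theorem~\ref{T20}, replacing the almost-Hurewicz input by the weak variants and the ``finite set of points'' by the appropriate dense/open-family data. Write $X=\cup_{\alpha<\kappa}X_\alpha$ and fix a sequence $(\mathcal{U}_n)$ of open covers of $X$, enumerated as $\mathcal{U}_n=\{U_m^{(n)} : m\in\mathbb{N}\}$. For each $\alpha<\kappa$ I would restrict the covers to $X_\alpha$ and apply the relevant selection principle, obtaining for each $n$ a finite set $\mathcal{V}_n^{(\alpha)}\subseteq\mathcal{U}_n$: for assertion (1) the $\wH$ property of $X_\alpha$ (i.e.\ $\Uf(\mathcal{O},\Gamma_D)$) yields that for every nonempty open $W\subseteq X$ meeting $X_\alpha$ one has $W\cap(\cup\mathcal{V}_n^{(\alpha)})\neq\emptyset$ for all but finitely many $n$, while for assertion (2) the $\wHk$ property (i.e.\ $\Uf(\mathcal{O},\Gamma^D)$) additionally produces a dense $Y_\alpha\subseteq X_\alpha$ with each $x\in Y_\alpha$ lying in $\cup\mathcal{V}_n^{(\alpha)}$ for all but finitely many $n$. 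Exactly as in Theorem~\ref{T20}, set $f_\alpha(n)=\min\{m : \mathcal{V}_n^{(\alpha)}\subseteq\{U_i^{(n)} : i\leq m\}\}$ and $Y=\{f_\alpha : \alpha<\kappa\}$; since $\kappa<\mathfrak d$, $\maxfin(Y)$ is not dominating, so there is a $g\in\mathbb{N}^\mathbb{N}$ with $g\nleq^* f_A$ for every finite $A\subseteq\kappa$. The candidate witness in both cases is $\mathcal{V}_n=\{U_i^{(n)} : i\leq g(n)\}$.

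For assertion (1) I would verify the equivalent formulation of $\wS$ in terms of finite families of open sets. Given a finite family $\mathcal{F}=\{W_1,\dots,W_p\}$ of nonempty open subsets of $X$, choose for each $j$ an index $\alpha_j<\kappa$ with $W_j\cap X_{\alpha_j}\neq\emptyset$ and put $A=\{\alpha_1,\dots,\alpha_p\}$. By the input above there is an $n_0$ with $W_j\cap(\cup\mathcal{V}_n^{(\alpha_j)})\neq\emptyset$ for all $j$ and all $n\geq n_0$. Since $g\nleq^* f_A$, there are infinitely many $n_1>n_0$ with $f_A(n_1)<g(n_1)$; for each such $n_1$ and each $j$ we get $f_{\alpha_j}(n_1)\leq f_A(n_1)<g(n_1)$, whence $\mathcal{V}_{n_1}^{(\alpha_j)}\subseteq\mathcal{V}_{n_1}$ and therefore $W_j\cap(\cup\mathcal{V}_{n_1})\neq\emptyset$ for every $j$. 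Thus the set of $n$ for which every member of $\mathcal{F}$ meets $\cup\mathcal{V}_n$ is infinite, i.e.\ $X$ is $\wS$.

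For assertion (2) I would first set $Y=\cup_{\alpha<\kappa}Y_\alpha$; since each $Y_\alpha$ is dense in $X_\alpha$ and $X=\cup_\alpha X_\alpha$, the inclusion $X=\cup_\alpha X_\alpha\subseteq\cup_\alpha\overline{Y_\alpha}\subseteq\overline{Y}$ shows that $Y$ is dense in $X$. To verify the equivalent formulation of $\wSk$ I take a finite $F\subseteq Y$, decompose it as $F=\cup_{\alpha\in A}F_\alpha$ with $F_\alpha\subseteq Y_\alpha$ and $A\subseteq\kappa$ finite, and pick $n_0$ with $F_\alpha\subseteq\cup\mathcal{V}_n^{(\alpha)}$ for all $\alpha\in A$ and all $n\geq n_0$ (possible since each $F_\alpha$ is finite and each of its points is eventually absorbed). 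As before, $g\nleq^* f_A$ furnishes infinitely many $n_1>n_0$ with $f_A(n_1)<g(n_1)$, for which $\mathcal{V}_{n_1}^{(\alpha)}\subseteq\mathcal{V}_{n_1}$ and hence $F=\cup_{\alpha\in A}F_\alpha\subseteq\cup\mathcal{V}_{n_1}$. So $F$ is contained in $\cup\mathcal{V}_n$ for infinitely many $n$, as required, and the pair $(Y,(\mathcal{V}_n))$ witnesses $\wSk$.

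The domination bookkeeping is routine and identical to Theorem~\ref{T20}. The genuinely new point, and the step I expect to need the most care, is the passage from points to open families in assertion (1): in Theorem~\ref{T20} each point of the test set $F$ sits in some $X_\alpha$, whereas here each \emph{open} set $W_j$ only needs to \emph{meet} some $X_{\alpha_j}$, so one must check that meeting $X_{\alpha_j}$ (rather than being contained in it) already triggers the weak-Hurewicz conclusion $W_j\cap(\cup\mathcal{V}_n^{(\alpha_j)})\neq\emptyset$ eventually, via the trace $W_j\cap X_{\alpha_j}$. The secondary subtlety is in assertion (2), namely that the union $Y=\cup_\alpha Y_\alpha$ of the locally dense sets is again dense in all of $X$; both are easily handled but are the only places where the weak versions diverge structurally from the almost version.
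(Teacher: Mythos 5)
Your argument is correct and is precisely the adaptation the paper has in mind: the paper omits the proof, stating only that it is ``similar to Theorem~\ref{T20} with necessary modifications,'' and your modifications (passing from points to traces $W_j\cap X_{\alpha_j}$ of open sets for the $\wH$/$\wS$ case, and taking $Y=\cup_{\alpha<\kappa}Y_\alpha$ with the observation $X_\alpha\subseteq\overline{Y_\alpha}$ for the $\wHk$/$\wSk$ case) are exactly the right ones, with the domination bookkeeping unchanged. No gaps.
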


Let $\kappa$ be any cardinal. We say that the collection $\{X_\alpha : \alpha<\kappa\}$ is a $\Omega$-wrapping if for each finite set $F\subseteq\cup_{\alpha<\kappa}X_\alpha$ there exists a $\beta<\kappa$ such that $F\subseteq X_\beta$.
\begin{Th}
Let $X$ be Lindel\"{o}f and $\kappa<\mathfrak b$. Suppose that $X=\cup_{\alpha<\kappa}X_\alpha$ and the collection $\{X_\alpha : \alpha<\kappa\}$ is a $\Omega$-wrapping. The following assertions hold.
\begin{enumerate}[wide=0pt,label={\upshape(\arabic*)}]
\item \label{T23} If each $X_\alpha$ is $\aS$, then $X$ is $\aS$.
\item \label{T24} If each $X_\alpha$ is $\wS$, then $X$ is $\wS$.
\end{enumerate}
\end{Th}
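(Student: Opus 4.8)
The plan is to reuse the diagonalization scheme of Theorem~\ref{T20}, but to exploit the $\Omega$-wrapping hypothesis so that the domination argument there (which forced $\kappa<\mathfrak d$) can be replaced by a mere boundedness argument, which needs only $\kappa<\mathfrak b$. I would prove (1) in detail and then run (2) in parallel, trading points for nonempty open sets. First, using that $X$ is Lindel\"{o}f, I would replace each $\mathcal{U}_n$ by a countable subcover and enumerate it as $\mathcal{U}_n=\{U_m^{(n)}:m\in\mathbb{N}\}$; this is precisely what forces the functions introduced below to land in $\mathbb{N}^\mathbb{N}$. For each $\alpha<\kappa$ the family $\{U\cap X_\alpha:U\in\mathcal{U}_n\}$ is an open cover of the $\aS$ space $X_\alpha$, so I obtain finite sets $\mathcal{V}_n^{(\alpha)}\subseteq\mathcal{U}_n$ such that every finite $F\subseteq X_\alpha$ lies in the $X_\alpha$-closure of $\cup\mathcal{V}_n^{(\alpha)}$ for infinitely many $n$; since the closure of a subset of $X_\alpha$ taken in $X_\alpha$ is contained in its closure taken in $X$, this already gives $F\subseteq\overline{\cup\mathcal{V}_n^{(\alpha)}}$ (closure in $X$) for infinitely many $n$. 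I would encode each witness by $f_\alpha(n)=\min\{m:\mathcal{V}_n^{(\alpha)}\subseteq\{U_i^{(n)}:i\leq m\}\}$ and put $Y=\{f_\alpha:\alpha<\kappa\}$.

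The key step is that $|Y|\leq\kappa<\mathfrak b$, so $Y$ is bounded: there is a $g\in\mathbb{N}^\mathbb{N}$ with $f_\alpha\leq^* g$ for every $\alpha<\kappa$. I would then define the single selection $\mathcal{V}_n=\{U_i^{(n)}:i\leq g(n)\}$, a finite subset of $\mathcal{U}_n$. This is exactly where the present proof diverges from Theorem~\ref{T20}: there one needs a $g$ escaping every value of $\maxfin(Y)$ because a finite $F$ may be spread among several pieces, whereas here the $\Omega$-wrapping will confine $F$ to a single piece, so plain boundedness of the individual $f_\alpha$ suffices and $\maxfin$ is not invoked at all.

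For the verification of (1), given a finite $F\subseteq X$, the $\Omega$-wrapping yields $\beta<\kappa$ with $F\subseteq X_\beta$. The $\aS$ witness for $X_\beta$ supplies an infinite set $I$ of indices with $F\subseteq\overline{\cup\mathcal{V}_n^{(\beta)}}$, while $f_\beta\leq^* g$ yields a cofinite set $C$ on which $\cup\mathcal{V}_n^{(\beta)}\subseteq\cup\mathcal{V}_n$; for the infinitely many $n\in I\cap C$ we obtain $F\subseteq\overline{\cup\mathcal{V}_n^{(\beta)}}\subseteq\overline{\cup\mathcal{V}_n}$, so $(\mathcal{V}_n)$ witnesses that $X$ is $\aS$. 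For (2) I would run the identical scheme with the $\wS$ witnesses of the $X_\alpha$: given a finite family $\{O_1,\dots,O_p\}$ of nonempty open subsets of $X$, I would choose $x_j\in O_j$ and apply $\Omega$-wrapping to $\{x_1,\dots,x_p\}$ to find $\beta$ with all $x_j\in X_\beta$, so each $O_j\cap X_\beta$ is a nonempty open subset of $X_\beta$; the $\wS$ property of $X_\beta$ then gives infinitely many $n$ meeting every $O_j\cap X_\beta$, and since $O_j\cap X_\beta\cap(\cup\mathcal{V}_n^{(\beta)})\neq\emptyset$ forces $O_j\cap(\cup\mathcal{V}_n^{(\beta)})\neq\emptyset$, the inclusion $\cup\mathcal{V}_n^{(\beta)}\subseteq\cup\mathcal{V}_n$ on the cofinite set transfers these hits to $\cup\mathcal{V}_n$.

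The main obstacle is the interface between the per-piece witnesses and the single global selection. Because $\aS$ and $\wS$ guarantee only \emph{frequent} (infinitely-often) coverage rather than the \emph{eventual} coverage that $\aH$ provides in Theorem~\ref{T20}, one cannot hope to force a single index $n$ to serve several pieces simultaneously, as the relevant infinite index sets for different $\alpha$ need not overlap; the $\Omega$-wrapping hypothesis is exactly what dissolves this difficulty, by collapsing every relevant finite configuration into a single $X_\beta$ and thereby reducing the whole argument to boundedness of $Y$. The remaining care is bookkeeping: drawing the $X_\alpha$-witnesses from the fixed countable enumeration of $\mathcal{U}_n$ so that each $f_\alpha$ is well defined, and, in (2), faithfully translating a finite collection of open sets of $X$ into open sets of the subspace $X_\beta$ through representative points.
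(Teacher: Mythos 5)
Your proposal is correct and follows essentially the same route as the paper's proof: the same per-piece witnesses $\mathcal{V}_n^{(\alpha)}$, the same encoding functions $f_\alpha$, boundedness from $\kappa<\mathfrak b$ to get a single $g$, the selection $\mathcal{V}_n=\{U_i^{(n)}:i\leq g(n)\}$, and the $\Omega$-wrapping to confine each finite set (or finite family of open sets, via representative points) to a single $X_\beta$. Your explicit remarks on why $\maxfin$ is unnecessary here and on the subspace-closure bookkeeping are accurate but do not change the argument.
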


\begin{proof}
We only give proof of (1).
Let $(\mathcal{U}_n)$ be a sequence of open covers of $X$. Without loss of generality assume that for each $n$ $\mathcal{U}_n=\{U_m^{(n)} : m\in\mathbb{N}\}$. For each $\alpha<\kappa$ choose a sequence $(\mathcal{V}_n^{(\alpha)})$ such that each $\mathcal{V}_n^{(\alpha)}$ is a finite subset of $\mathcal{U}_n$ and each finite set $F\subseteq X_\alpha$ is contained in $\overline{\cup\mathcal{V}_n^{(\alpha)}}$ for infinitely many $n$. Also for each $\alpha<\kappa$ define $f_\alpha:\mathbb{N}\to\mathbb{N}$ by $f_\alpha(n)=\min\{m\in\mathbb{N} : \mathcal{V}_n^{(\alpha)}\subseteq\{U_i^{(n)} : i\leq m\}\}$. Since $\kappa<\mathfrak{b}$, there is a $g\in\mathbb{N}^\mathbb{N}$ such that for each $\alpha<\kappa$ we have $f_\alpha\leq^* g$. Observe that $\mathcal{V}_n=\{U_i^{(n)} : i\leq g(n)\}$ is a finite subset of $\mathcal{U}_n$ for each $n$. We now claim that the sequence $(\mathcal{V}_n)$ witnesses that $X$ is $\aS$. Let $F$ be a finite subset of $X$. Choose a $\beta<\kappa$ such that $F\subseteq X_\beta$. Thus we have $F\subseteq\overline{\cup\mathcal{V}_n^{(\beta)}}$ for infinitely many $n$. Also choose a $n_0\in\mathbb{N}$ such that $f_\beta(n)\leq g(n)$ for all $n\geq n_0$. Again choose a $n_1>n_0$ such that $F\subseteq\overline{\cup\mathcal{V}_{n_1}^{(\beta)}}$. Clearly $F\subseteq\cup_{i\leq f_\beta(n_1)}\overline{U_i^{(n_1)}}\subseteq\cup_{i\leq g(n_1)}\overline{U_i^{(n_1)}}$ and hence $F\subseteq\overline{\cup\mathcal{V}_{n_1}}$. The claim is now proved.
\end{proof}

\begin{Rem}
\label{R3}
We do not know whether the above result holds for the $\wSk$ property.
\end{Rem}
A collection $\{X_\alpha : \alpha<\kappa\}$ in a space $X$ is said to be a strongly $\Omega$-wrapping if for each $\alpha<\kappa$ and any dense set $Y_\alpha\subseteq X_\alpha$ the collection $\{Y_\alpha : \alpha<\kappa\}$ is a $\Omega$-wrapping.
\begin{Th}
\label{T55}
Let $X$ be Lindel\"{o}f and $\kappa<\mathfrak b$. Suppose that $X=\cup_{\alpha<\kappa}X_\alpha$ and $\{X_\alpha : \alpha<\kappa\}$ is a strongly $\Omega$-wrapping. If each $X_\alpha$ is $\wSk$, then $X$ is $\wSk$.
\end{Th}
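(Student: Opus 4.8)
The plan is to adapt the proof of the $\aS$ case of the preceding theorem (Theorem~\ref{T23}) to the $\wSk$ setting, the only genuinely new ingredient being the bookkeeping of the dense witnessing sets, which is exactly where the \emph{strongly} $\Omega$-wrapping hypothesis will be used. Fix a sequence $(\mathcal{U}_n)$ of open covers of $X$ and, as before, enumerate $\mathcal{U}_n=\{U_m^{(n)} : m\in\mathbb{N}\}$ for each $n$. For each $\alpha<\kappa$, I would regard $(\mathcal{U}_n)$ as a sequence of open covers of $X_\alpha$ and apply the $\wSk$ property of $X_\alpha$ to obtain a dense subset $Y_\alpha$ of $X_\alpha$ together with a sequence $(\mathcal{V}_n^{(\alpha)})$ such that each $\mathcal{V}_n^{(\alpha)}$ is a finite subset of $\mathcal{U}_n$ and each finite set $F\subseteq Y_\alpha$ is contained in $\cup\mathcal{V}_n^{(\alpha)}$ for infinitely many $n$. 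I then define $f_\alpha\colon\mathbb{N}\to\mathbb{N}$ by $f_\alpha(n)=\min\{m\in\mathbb{N} : \mathcal{V}_n^{(\alpha)}\subseteq\{U_i^{(n)} : i\leq m\}\}$, exactly as in the $\aS$ argument.

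Since $\kappa<\mathfrak b$, the family $\{f_\alpha : \alpha<\kappa\}$ is bounded, so there is a $g\in\mathbb{N}^\mathbb{N}$ with $f_\alpha\leq^* g$ for every $\alpha<\kappa$. I would put $\mathcal{V}_n=\{U_i^{(n)} : i\leq g(n)\}$, a finite subset of $\mathcal{U}_n$, so that $\cup\mathcal{V}_n^{(\alpha)}\subseteq\cup\mathcal{V}_n$ for all sufficiently large $n$ (precisely those with $f_\alpha(n)\leq g(n)$). Next I would set $Y=\cup_{\alpha<\kappa}Y_\alpha$ and check that it is dense in $X$: since each $Y_\alpha$ is dense in $X_\alpha$ we have $X_\alpha\subseteq\overline{Y_\alpha}\subseteq\overline{Y}$, whence $X=\cup_{\alpha<\kappa}X_\alpha\subseteq\overline{Y}$. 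This $Y$, together with the sequence $(\mathcal{V}_n)$, is the candidate witness for $X$ being $\wSk$.

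It then remains to verify the covering property. Let $F$ be a finite subset of $Y$. Here is the crucial point: because $\{X_\alpha : \alpha<\kappa\}$ is a strongly $\Omega$-wrapping and each $Y_\alpha$ is a dense subset of $X_\alpha$, the collection $\{Y_\alpha : \alpha<\kappa\}$ is itself a $\Omega$-wrapping, so there exists a $\beta<\kappa$ with $F\subseteq Y_\beta$. By the choice of $(\mathcal{V}_n^{(\beta)})$, the set $F$ is contained in $\cup\mathcal{V}_n^{(\beta)}$ for infinitely many $n$; discarding the finitely many $n$ with $f_\beta(n)>g(n)$ still leaves infinitely many $n$ for which $F\subseteq\cup\mathcal{V}_n^{(\beta)}\subseteq\cup\mathcal{V}_n$. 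Hence every finite $F\subseteq Y$ lies in $\cup\mathcal{V}_n$ for infinitely many $n$, so $(Y,(\mathcal{V}_n))$ witnesses that $X$ is $\wSk$.

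The hard part, and indeed the reason this is not a mere restatement of the $\aS$/$\wS$ version, is that the $\wSk$ property produces in each piece its own dense set $Y_\alpha$, over which we have no control. Plain $\Omega$-wrapping of $\{X_\alpha\}$ says nothing about arbitrary dense subsets, which is exactly the gap recorded in Remark~\ref{R3}; the passage to a \emph{strongly} $\Omega$-wrapping is tailored to close precisely this gap, guaranteeing that whatever dense sets $Y_\alpha$ happen to arise, their union still wraps every finite subset. Everything else is a routine transcription of the bounding argument for the $\mathfrak b$ case.
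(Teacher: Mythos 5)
Your proposal is correct and follows essentially the same route as the paper: apply the $\wSk$ property piecewise to get dense sets $Y_\alpha$ and selectors $(\mathcal{V}_n^{(\alpha)})$, bound the associated functions $f_\alpha$ using $\kappa<\mathfrak b$, take $Y=\cup_{\alpha<\kappa}Y_\alpha$, and invoke the strongly $\Omega$-wrapping hypothesis to place each finite $F\subseteq Y$ inside a single $Y_\beta$. Your write-up in fact spells out the final verification that the paper leaves as "easy to observe," and correctly identifies why the strengthened wrapping hypothesis is needed here but not in the $\aS$/$\wS$ cases.
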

\begin{proof}
Let $(\mathcal{U}_n)$ be a sequence of open covers of $X$. We can assume that for each $n$ $\mathcal{U}_n=\{U_m^{(n)} : m\in\mathbb{N}\}$. For each $\alpha<\kappa$ we get a dense subset $Y_\alpha$ and a sequence $(\mathcal{V}_n^{(\alpha)})$ such that for each $n$ $\mathcal{V}_n^{(\alpha)}$ is a finite subset of $\mathcal{U}_n$ and each finite set $F\subseteq Y_\alpha$ is contained in $\cup\mathcal{V}_n^{(\alpha)}$ for infinitely many $n$. By the given condition, $\{Y_\alpha : \alpha<\kappa\}$ is a $\Omega$-wrapping. For each $\alpha<\kappa$ we define $f_\alpha:\mathbb{N}\to\mathbb{N}$ by $f_\alpha(n)=\min\{m\in\mathbb{N} : \mathcal{V}_n^{(\alpha)}\subseteq\{U_i^{(n)} : i\leq m\}\}$. Since $\kappa<\mathfrak{b}$, there exists a $g\in\mathbb{N}^\mathbb{N}$ such that for each $\alpha<\kappa$ we get $f_\alpha\leq^* g$. Clearly $Y=\cup_{\alpha<\kappa}Y_\alpha$ is a dense subset of $X$ and for each $n$ $\mathcal{V}_n=\{U_i^{(n)} : i\leq g(n)\}$ is a finite subset of $\mathcal{U}_n$. It is easy to observe that $Y$ and the sequence $(\mathcal{V}_n)$ witness for $(\mathcal{U}_n)$ that $X$ is $\wSk$.
\end{proof}

\section{Further observations on the weaker versions}
\subsection{Preservation like properties}
\label{S1}
We now present few preservation like properties of these spaces. First observe that every regular-closed subset of a $\wSk$ space is $\wSk$.
The above result does not hold for the other variations.
\begin{Ex}\hfill
\begin{enumerate}[wide=0pt,label={\upshape(\arabic*)}]
\item \label{E6}
\emph{A regular-closed subset of a $\aS$ space need not be $\aS$.}\\
Let $X_1$ be the space as in Example~\ref{E1} and $X_2$ be the space as in Example~\ref{E2}. Then $X_1$ is $\aH$ and $X_2$ is not $\aS$. Assume that $X_1\cap X_2=\emptyset$. Since the cardinality of $D$ is $\omega_1$, we write $D=\{d_\alpha :\alpha<\omega_1\}$. Define a bijection $\varphi:D\times\{\omega\}\to A$ by $\varphi(d_\alpha,\omega)=(a_\alpha,-1)$ for each $\alpha<\omega_1$. Also define $Z$ to be the quotient image of the topological sum $X_1\oplus X_2$ by identifying $(d_\alpha,\omega)$ of $X_2$ with $\varphi(d_\alpha,\omega)$ of $X_1$ for each $\alpha<\omega_1$. Let $q:X_1\oplus X_2\to Z$ be the quotient map. Now $q(X_2)$ is a regular-closed subset of $Z$ which is not $\aS$ as it is homeomorphic to $X_2$.

We now claim that $Z$ is $\aS$. The claim will follow if we show that $Z$ is $\aH$. Choose a sequence  $(\mathcal{U}_n)$ of open covers of $Z$. Now $q(X_1)$ being the homoeomorphic image of a $\aH$ space, is also $\aH$. Apply the $\aH$ property of $q(X_1)$ to $(\mathcal{U}_n)$ to obtain a sequence $(\mathcal{H}_n)$ such that for each $n$ $\mathcal{H}_n$ is a finite subset of $\mathcal{U}_n$ and each $x\in q(X_1)$ belongs to $\overline{\cup\mathcal{H}_n}$ for all but finitely many $n$. Again since $q(\beta D\times\omega)$ is homeomorphic to $\beta D\times\omega$, $q(\beta D\times\omega)$ is $\sigma$-compact and so is Hurewicz. Thus there is a sequence $(\mathcal{K}_n)$ such that for each $n$ $\mathcal{K}_n$ is a finite subset of $\mathcal{U}_n$ and each $x\in q(\beta D\times\omega)$ belongs to $\cup\mathcal{K}_n$ for all but finitely many $n$. For each $n$ let $\mathcal{V}_n=\mathcal{H}_n\cup\mathcal{K}_n$. The sequence $(\mathcal{V}_n)$ witnesses that $Z$ is $\aH$.\\
\item
\label{E7}
\emph{A closed subset of a $\wS$ (respectively, $\wSk$) space need not be $\wS$ (respectively, $\wSk$).}\\
Consider $X$ as in Example~\ref{E2}. Now $X$ is a Tychonoff $\wSk$ space and hence a $\wS$ space. Since $D\times\{\omega\}$ is a discrete closed subset of $X$ with cardinality $\omega_1$, it follows that $D\times\{\omega\}$ fails to be $\wS$ and $\wSk$ as well.
\end{enumerate}
\end{Ex}

Observe that if a subset $Y$ of a space $X$ is $\wS$ (respectively, $\wSk$), then $\overline{Y}$ is also $\wS$ (respectively, $\wSk$). Thus for a dense subset $Y$ of $X$, if $Y$ is $\wS$ (respectively, $\wSk$), then $X$ is also $\wS$ (respectively, $\wSk$), and also every separable space is $\wSk$ (and hence $\wS$). Surprisingly a $\aS$ space may not satisfy this preservation property. The Baire space $X$ is not $\aS$ because $X$ is paracompact and is not Scheepers (see Theorem~\ref{T26}). Since $X$ is separable, there exists a countable dense subset $Y$ of $X$. Thus $Y$ is $\aS$ but $\overline{Y}=X$ is not so.

\begin{Rem}
\label{R1}
The Baire space $X$ is hypocompact and separable. Clearly $X$ is $\wSk$ (and hence $\wS$). Thus there is a hypocompact $\wSk$ space which is not Scheepers (as $X$ is not $\aS$).
\end{Rem}

%
%
%
%
%

\begin{Th}
\label{T14}
For a space $X$ the following assertions are equivalent.
\begin{enumerate}[wide=0pt,label={\upshape(\arabic*)}]
  \item $X$ is Scheepers.
  \item $AD(X)$ is Scheepers.
  \item $AD(X)$ is $\wSk$.
\end{enumerate}
\end{Th}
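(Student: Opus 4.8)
The plan is to prove the cycle of implications $(1)\Rightarrow(2)\Rightarrow(3)\Rightarrow(1)$. The implication $(2)\Rightarrow(3)$ is immediate, since every Scheepers space is $\wSk$ (this is a special case of the general implication from $\Uf(\mathcal{O},\Omega)$ to $\Uf(\mathcal{O},\Omega^D)$, as $\Omega\subseteq\Omega^D$ via taking $Y=X$). So the real content lies in $(1)\Rightarrow(2)$ and in the reverse direction $(3)\Rightarrow(1)$.

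For $(1)\Rightarrow(2)$ I would exploit the structure of $AD(X)$. The isolated layer $X\times\{1\}$ is open and the projection onto its first coordinate is a bijection onto $X$, while the closed layer $X\times\{0\}$ is homeomorphic to $X$. Given a sequence $(\mathcal{W}_n)$ of open covers of $AD(X)$, I would first extract from each $\mathcal{W}_n$ an open cover of the closed copy $X\times\{0\}\cong X$ and apply the Scheepers property of $X$ there to get finite selections whose unions form an $\omega$-cover of $X\times\{0\}$. The point of the Alexandroff duplicate is that a basic neighbourhood of $(x,0)$ automatically captures all but one point of the corresponding fibre in $X\times\{1\}$; thus covering the bottom copy nearly covers the top copy, leaving only a controlled discrete remainder. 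I would then separately handle the isolated points not yet captured. The key technical maneuver is to split $\mathbb{N}$ into infinitely many infinite blocks and devote some blocks to the bottom-copy selection and interleave the handling of the isolated layer, so that the combined finite selections $\mathcal{V}_n$ give an $\omega$-cover of all of $AD(X)$. I expect the main obstacle here to be verifying the $\omega$-cover condition for finite sets $F\subseteq AD(X)$ that mix bottom points $(x,0)$ with isolated points $(y,1)$: one must ensure some single index $n$ simultaneously covers both kinds, which requires care in the bookkeeping of the block decomposition.

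For the decisive implication $(3)\Rightarrow(1)$, I would argue that the $\wSk$ property of $AD(X)$ already forces the Scheepers property of $X$. The heuristic is that the dense set $Y$ witnessing $\wSk$ must, because the isolated layer $X\times\{1\}$ is dense in $AD(X)$ and every point of it is isolated, essentially contain (a copy of) all of $X$: an isolated point lies in every dense set. More precisely, given a sequence $(\mathcal{U}_n)$ of open covers of $X$, I would lift each to an open cover of $AD(X)$ (for instance by replacing $U\in\mathcal{U}_n$ with the saturated open set $(U\times\{0,1\})$, or a suitable basic-neighbourhood version), apply $\wSk$ to obtain a dense $Y\subseteq AD(X)$ and finite selections, and then observe that $X\times\{1\}\subseteq Y$ since isolated points are unavoidable in any dense set. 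Restricting the selections along the projection $X\times\{1\}\to X$ then recovers finite subsets of $\mathcal{U}_n$ whose unions $\omega$-cover $X$, giving the Scheepers property.

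The crux of the whole argument is this observation that isolated points must belong to every dense set, which collapses the ``weak/dense'' relaxation in $AD(X)$ back to a genuine cover of $X$; once that is isolated, both nontrivial implications become manageable. I therefore expect the main obstacle to be purely organizational rather than conceptual: carefully matching basic neighbourhoods of bottom points with the isolated fibres, managing the partition of $\mathbb{N}$ into blocks in $(1)\Rightarrow(2)$, and confirming that the projected finite selections in $(3)\Rightarrow(1)$ indeed constitute finite subfamilies of the original covers $\mathcal{U}_n$ and not merely of their lifts.
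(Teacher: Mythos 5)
Your key insight for the decisive implication $(3)\Rightarrow(1)$ --- that every point of $X\times\{1\}$ is isolated, hence belongs to every dense subset of $AD(X)$, so the dense witness $Y$ contains a full copy of $X$ and the $\omega$-cover condition on $Y$ descends to $X$ --- is exactly the paper's argument, and your lift $U\mapsto U\times\{0,1\}$ works (the paper uses the basic neighbourhoods $(U_x^{(n)}\times\{0,1\})\setminus\{(x,1)\}$ together with the singletons $\{(x,1)\}$, but either lift does the job). The implication $(2)\Rightarrow(3)$ is indeed immediate.

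The one place where your plan goes astray is the mechanism you propose for $(1)\Rightarrow(2)$. Splitting $\mathbb{N}$ into infinite blocks and devoting different blocks to the bottom copy and to the isolated layer cannot work as stated: a finite set $F\subseteq AD(X)$ mixing points $(x,0)$ and $(y,1)$ must be swallowed by the union of a \emph{single} finite selection $\mathcal{V}_n$, and selections made at indices from different blocks never combine. Moreover $X\times\{1\}$ is a discrete space of cardinality $|X|$, so there is no selection principle available on the top layer to run on its own block of indices. You flag this obstacle yourself but do not resolve it, and the resolution is not bookkeeping of blocks: it is that no blocks are needed at all. At stage $n$, once you have chosen $U_x^{(n)}\in\mathcal{U}_n$ containing the basic neighbourhood $(V_x^{(n)}\times\{0,1\})\setminus\{(x,1)\}$ and applied the Scheepers property of $X$ to $(\{V_x^{(n)}:x\in X\})_n$ to get finite sets $F_n\subseteq X$, the only points of $AD(X)$ possibly missed by $\bigcup_{x\in F_n}U_x^{(n)}$ over the region covered at stage $n$ are the finitely many points $(x,1)$ with $x\in F_n$; you simply adjoin, for each such $x$, one more element $O_x^{(n)}\in\mathcal{U}_n$ containing $(x,1)$ to the \emph{same} finite selection $\mathcal{V}_n=\{U_x^{(n)}:x\in F_n\}\cup\{O_x^{(n)}:x\in F_n\}$. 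With that correction in place of the block decomposition, your argument coincides with the paper's.
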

\begin{proof}
$(1)\Rightarrow (2)$. Let $(\mathcal{U}_n)$ be a sequence of open covers of $AD(X)$. For each $n$ and each $x\in X$ let $W_x^{(n)}=(V_x^{(n)}\times\{0,1\})\setminus\{(x,1)\}$ be an open set in $AD(X)$ containing $(x,0)$ such that there is a $U_x^{(n)}\in\mathcal{U}_n$ with $W_x^{(n)}\subseteq U_x^{(n)}$, where $V_x^{(n)}$ is an open set in $X$ containing $x$. For each $n$ $\mathcal{W}_n=\{V_x^{(n)} : x\in X\}$ is an open cover of $X$. Apply (1) to $(\mathcal{W}_n)$ to obtain a sequence $(F_n)$ of finite subsets of $X$ such that $(\{V_x^{(n)} : x\in F_n\})$ witnesses the Scheepers property of $X$. For each $n$ and each $x\in F_n$ choose a $O_x^{(n)}\in\mathcal{U}_n$ with $(x,1)\in O_x^{(n)}$. Observe that  $\mathcal{V}_n=\{U_x^{(n)} : x\in F_n\}\cup\{O_x^{(n)} : x\in F_n\}$ is a finite subset of $\mathcal{U}_n$ for each $n$. The sequence $(\mathcal{V}_n)$ witnesses that $AD(X)$ is Scheepers.

$(3)\Rightarrow (1)$. Let $(\mathcal{U}_n)$ be a sequence of open covers of $X$. Say, $\mathcal{U}_n=\{U_x^{(n)} : x\in X\}$, where $U_x^{(n)}$ is an open set in $X$ containing $x$ for each $n$. Choose $\mathcal{W}_n=\{(U_x^{(n)}\times\{0,1\})\setminus\{(x,1)\} : x\in X\}\cup\{\{(x,1)\} : x\in X\}$ for each $n$. Since $(\mathcal{W}_n)$ is a sequence of open covers of $AD(X)$, there are a dense subset $Z$ of $AD(X)$ and a sequence $(F_n)$ of finite subsets of $X$ such that $\{(U_x^{(n)}\times\{0,1\})\setminus\{(x,1)\} : x\in F_n\}\cup\{\{(x,1)\} : x\in F_n\}$ witnesses the $\wSk$ property of $AD(X)$. For each $n$ $\mathcal{V}_n=\{U_x^{(n)} : x\in F_n\}$ is a finite subset of $\mathcal{U}_n$. It now follows that $X$ is Scheepers.
\end{proof}

We now give an example of a $\aS$ space $X$ such that $AD(X)$ is not $\aS$.
Let $X$ be the space as in Example~\ref{E1}. Thus $X$ is an Urysohn $\aS$ space and $A=\{(a_\alpha,-1) : \alpha<\omega_1\}$ is an uncountable discrete closed subset of $X$. Clearly $A\times\{1\}$ is an uncountable discrete clopen subset of $AD(X)$. Since the $\aS$ property is hereditary for clopen subsets, it follows that $AD(X)$ is not $\aS$.

Also there is a $\wS$ (respectively, $\wSk$) space $X$ such that $AD(X)$ is not $\wS$ (respectively, not $\wSk$).
Let $X$ be the space as in Example~\ref{E2}. Then $X$ is a Tychonoff $\wS$ (and also a $\wSk$) space and $A=D\times\{\omega\}$ is an uncountable discrete closed subset of $X$. Observe that $A\times\{1\}$ is an uncountable discrete clopen subset of $AD(X)$. Thus $AD(X)$ is not $\wS$. It is also clear that $AD(X)$ is not $\wSk$.

\begin{Th}
\label{T30}
For a space $X$
\begin{enumerate}[wide=0pt,label={\upshape(\arabic*)},
ref={\theTh(\arabic*)},leftmargin=*]
  \item\label{T3001} if $AD(X)$ is $\aS$, then $X$ is also $\aS$.
  \item\label{T3002} if $AD(X)$ is $\wS$, then $X$ is also $\wS$.
\end{enumerate}
\end{Th}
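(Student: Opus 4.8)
The plan is to transfer a selection from $AD(X)$ down to $X$ by pulling back open covers along the natural projection. Since $X$ is homeomorphic to the closed subspace $X\times\{0\}$ of $AD(X)$, but neither $\aS$ nor $\wS$ is preserved by closed subspaces (Example~\ref{E7}), a bare heredity argument is unavailable; instead I will exploit the specific way the two copies $X\times\{0\}$ and $X\times\{1\}$ sit inside $AD(X)$.

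For both parts I start with a sequence $(\mathcal{U}_n)$ of open covers of $X$. For each $U\in\mathcal{U}_n$ the set $\widetilde U=U\times\{0,1\}$ is open in $AD(X)$ (it is the full preimage of $U$ under the projection $AD(X)\to X$), and $\widetilde{\mathcal{U}}_n=\{\widetilde U:U\in\mathcal{U}_n\}$ is an open cover of $AD(X)$. Applying the assumed property of $AD(X)$ to $(\widetilde{\mathcal{U}}_n)$ yields finite subsets $\widetilde{\mathcal{V}}_n\subseteq\widetilde{\mathcal{U}}_n$; since $U\mapsto\widetilde U$ is injective these correspond to finite $\mathcal{V}_n\subseteq\mathcal{U}_n$, and writing $W_n=\cup\mathcal{V}_n$ one has $\cup\widetilde{\mathcal{V}}_n=W_n\times\{0,1\}$. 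The claim is that $(\mathcal{V}_n)$ already witnesses the desired property for $X$.

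The key computation, which I expect to be the only delicate point, is that for open $W\subseteq X$,
\[
\Cl_{AD(X)}(W\times\{0,1\})=(\Cl_X(W)\times\{0\})\cup(W\times\{1\}),
\]
verified by a routine check on basic neighbourhoods: a point $(y,1)$ is isolated and lies in the closure iff $y\in W$, while $(y,0)$ lies in the closure iff every neighbourhood of $y$ meets $W$, i.e. $y\in\Cl_X(W)$. For part (1), given a finite $F\subseteq X$, I test the finite set $F\times\{0\}\subseteq AD(X)$: since $\{\cup\widetilde{\mathcal{V}}_n:n\in\mathbb{N}\}\in\overline{\Omega}$ in $AD(X)$, there is an $n$ with $F\times\{0\}\subseteq\Cl_{AD(X)}(W_n\times\{0,1\})$, and the displayed identity forces $F\subseteq\Cl_X(W_n)=\overline{\cup\mathcal{V}_n}$. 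Hence $\{\cup\mathcal{V}_n:n\in\mathbb{N}\}\in\overline{\Omega}$ for $X$, so $X$ is $\aS$.

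For part (2) I use the equivalent formulation of $\wS$. Given a finite collection $\mathcal{F}=\{V_1,\dots,V_p\}$ of nonempty open subsets of $X$, I consider the nonempty open sets $V_j\times\{1\}$ of $AD(X)$ (unions of isolated points). Because $(V_j\times\{1\})\cap(W_n\times\{0,1\})\neq\emptyset$ if and only if $V_j\cap W_n\neq\emptyset$, the property $\{\cup\widetilde{\mathcal{V}}_n:n\in\mathbb{N}\}\in\Omega_D$ of $AD(X)$, applied to the finite collection $\{V_1\times\{1\},\dots,V_p\times\{1\}\}$, yields infinitely many $n$ for which each $V_j$ meets $\cup\mathcal{V}_n$. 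This is precisely the $\wS$ condition for $X$, and everything beyond the closure identity (and its elementary open-set analogue used here) is a straightforward transcription between $X$ and $AD(X)$.
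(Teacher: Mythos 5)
Your proposal is correct and follows essentially the same route as the paper: lift each cover of $X$ to the cover $\{U\times\{0,1\}:U\in\mathcal{U}_n\}$ of $AD(X)$, select there, and test the resulting selection against $F\times\{0\}$ (for $\aS$) or against open subsets of the isolated copy $X\times\{1\}$ (for $\wS$). Your explicit closure identity $\Cl_{AD(X)}(W\times\{0,1\})=(\Cl_X(W)\times\{0\})\cup(W\times\{1\})$ makes precise the step the paper leaves implicit, and your argument for part (2), which the paper omits, is the natural transcription of the same idea.
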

\begin{proof}
We only present proof for $(1)$.
Let $(\mathcal{U}_n)$ be a sequence of open covers of $X$. For each $n$ $\mathcal{W}_n=\{U\times\{0,1\} : U\in\mathcal{U}_n\}$ is an open cover of $AD(X)$. Apply the $\aS$ property of $AD(X)$ to $(\mathcal{W}_n)$ to obtain a sequence $(\mathcal{H}_n)$ such that for each $n$ $\mathcal{H}_n$ is a finite subset of $\mathcal{W}_n$ and each finite set $A\subseteq AD(X)$ is contained in $\overline{\cup\mathcal{H}_n}$ for some $n$. For each $n$ choose $\mathcal{V}_n=\{U\in\mathcal{U}_n : U\times\{0,1\}\in\mathcal{H}_n\}$. We claim that the sequence $(\mathcal{V}_n)$ witnesses for $(\mathcal{U}_n)$ that $X$ is $\aS$. Let $F$ be a finite subset of $X$. Choose a $n_0\in\mathbb{N}$ such that $F\times\{0\}\subseteq\overline{\cup\mathcal{H}_{n_0}}$. It follows that $F\subseteq \overline{\cup\mathcal{V}_{n_0}}$. This completes the proof.
\end{proof}

By Example~\ref{E1} (respectively, Example~\ref{E2}), there exists an Urysohn $\aS$ (respectively, Tychonoff $\wSk$ and hence $\wS$) space $X$ such that $e(X)\geq\omega_1$. However for a $T_1$ space $X$ it can be shown that if $AD(X)$ is $\wS$, then $e(X)<\omega_1$. The conclusion also holds if $AD(X)$ is either $\aS$ or $\wSk$.

\subsection{The productively properties}
If $P$ is a property of a space, we call a space $X$ productively $P$ if $X\times Y$ has the property $P$ whenever $Y$ has the property $P$. We start with the following basic observation for dense subsets. If $Y\subseteq X$ is dense in $X$ and $D\subseteq Y$ is dense in $Y$, then $D$ is dense in $X$.
Also if $D\subseteq X$ is dense in $X$ and $E\subseteq Y$ is dense in $Y$, then $D\times E$ is dense in $X\times Y$.


\begin{Prop}
\label{T35}
Let $D$ be a dense subset of a space $X$.
\begin{enumerate}[wide=0pt,label={\upshape(\arabic*)},leftmargin=*]
  \item If $D$ is productively $\wS$, then $X$ is also productively $\wS$.
  \item If $D$ is productively $\wSk$, then $X$ is also productively $\wSk$.
\end{enumerate}
\end{Prop}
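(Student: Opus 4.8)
The plan is to deduce both parts from the two facts recorded immediately before the statement: first, that if $E\subseteq W$ is dense in $W$ and $E$ is $\wS$ (respectively $\wSk$), then $W$ itself is $\wS$ (respectively $\wSk$); and second, that a product of dense subsets is dense in the product space. Since the two parts are formally identical, I would carry them out in parallel, presenting part $(1)$ in detail and then indicating that part $(2)$ is verbatim.

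So let $D$ be productively $\wS$ and let $Y$ be an arbitrary $\wS$ space; the goal is to show that $X\times Y$ is $\wS$. Applying the productivity hypothesis for $D$ to the $\wS$ space $Y$ yields immediately that $D\times Y$ is $\wS$. Because $D$ is dense in $X$ and $Y$ is trivially dense in itself, the product density observation shows that $D\times Y$ is dense in $X\times Y$. Hence $X\times Y$ contains a dense $\wS$ subspace, and by the preservation fact quoted above $X\times Y$ is $\wS$. As $Y$ was an arbitrary $\wS$ space, $X$ is productively $\wS$. Part $(2)$ is then obtained by repeating this argument with $\wSk$ everywhere in place of $\wS$, using the corresponding preservation statement for dense $\wSk$ subspaces.

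The one step I would flag as substantive — and the only place requiring a word of care — is the identification of topologies on $D\times Y$: the product topology on $D\times Y$ (with $D$ carrying its subspace topology from $X$) coincides with the topology that $D\times Y$ inherits as a subspace of $X\times Y$. This is precisely what allows the conclusion ``$D\times Y$ is $\wS$'' furnished by the productivity hypothesis to be read as a statement about a genuine \emph{subspace} of $X\times Y$, so that the dense-subspace preservation result is applicable rather than yielding merely an abstractly $\wS$ space unrelated to the product. Everything else is routine bookkeeping.
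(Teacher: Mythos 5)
Your proposal is correct and is essentially the paper's own argument: the paper's proof simply inlines the two observations you cite (that $D\times Y$ is dense in $X\times Y$, and that a space with a dense $\wS$/$\wSk$ subspace has that property), applying the $\wS$ property of $D\times Y$ to covers of $X\times Y$ and transferring the witnesses via density. Your remark about the subspace versus product topology on $D\times Y$ is a fair point of care, but it is the standard identification and poses no obstacle.
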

\begin{proof}
We only give proof for the first assertion.
Let $Y$ be a $\wS$ space and $(\mathcal{U}_n)$ be a sequence of open covers of $X$. Since $D$ is productively $\wS$, $D\times Y$ is $\wS$. Choose a sequence $(\mathcal{V}_n)$ such that for each $n$ $\mathcal{V}_n$ is a finite subset of $\mathcal{U}_n$ and for each finite collection $\mathcal{F}$ of nonempty open sets of $D\times Y$ there is a $n$ such that $U\cap(\cup\mathcal{V}_n)\neq\emptyset$ for all $U\in\mathcal{F}$. Since $D$ is dense in $X$, the sequence $(\mathcal{V}_n)$ witnesses for $(\mathcal{U}_n)$ that $X\times Y$ is $\wS$.
\end{proof}

The above result does not hold for productively $\aS$ spaces (see Remark~\ref{R2}).

\begin{Prop}
\label{T36}
Let $\tau$ and $\tau^\prime$ be two topologies on $X$ such that $\tau^\prime$ is finer than $\tau$. The following assertions hold.
\begin{enumerate}[wide=0pt,label={\upshape(\arabic*)},leftmargin=*]
  \item If $(X,\tau^\prime)$ is productively $\aS$, then $(X,\tau)$ is so.
  \item If $(X,\tau^\prime)$ is productively $\wS$, then $(X,\tau)$ is so.
  \item If $(X,\tau^\prime)$ is productively $\wSk$, then $(X,\tau)$ is so.
\end{enumerate}
\end{Prop}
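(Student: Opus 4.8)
The plan is to reduce all three parts to a single preservation fact: each of the properties $\aS$, $\wS$ and $\wSk$ is inherited by continuous surjective images, and then to exhibit $(X,\tau)\times Y$ as such an image of $(X,\tau^\prime)\times Y$. Since $\tau^\prime$ is finer than $\tau$, the identity $\iota\colon(X,\tau^\prime)\to(X,\tau)$ is continuous (every $\tau$-open set is $\tau^\prime$-open). Consequently, for \emph{every} space $Y$ the map $\phi=\iota\times\mathrm{id}_Y\colon(X,\tau^\prime)\times Y\to(X,\tau)\times Y$ is a continuous bijection, hence a continuous surjection.

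First I would record the preservation lemma. Let $f\colon Z\to W$ be a continuous surjection with $Z$ being $\aS$ (the arguments for $\wS$ and $\wSk$ are parallel). Given a sequence $(\mathcal{U}_n)$ of open covers of $W$, the pulled-back families $f^{-1}(\mathcal{U}_n)=\{f^{-1}(U):U\in\mathcal{U}_n\}$ are open covers of $Z$; applying $\aS$ yields finite $\mathcal{V}_n^\prime\subseteq f^{-1}(\mathcal{U}_n)$ whose unions lie in $\overline{\Omega}$ for $Z$. Setting $\mathcal{V}_n=\{U\in\mathcal{U}_n:f^{-1}(U)\in\mathcal{V}_n^\prime\}$ gives finite subfamilies of $\mathcal{U}_n$ with $\cup\mathcal{V}_n^\prime=f^{-1}(\cup\mathcal{V}_n)$. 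For a finite $F\subseteq W$ pick a finite $G\subseteq Z$ with $f(G)=F$; then $G\subseteq\overline{\cup\mathcal{V}_n^\prime}$ for infinitely many $n$, and continuity together with surjectivity gives, for those $n$,
\[
F=f(G)\subseteq f\bigl(\overline{f^{-1}(\cup\mathcal{V}_n)}\bigr)\subseteq\overline{f(f^{-1}(\cup\mathcal{V}_n))}=\overline{\cup\mathcal{V}_n},
\]
so $W$ is $\aS$. For $\wS$ one transfers the clause instead: each nonempty open $V\subseteq W$ has nonempty open preimage $f^{-1}(V)$, and $f^{-1}(V)\cap(\cup\mathcal{V}_n^\prime)=f^{-1}(V\cap\cup\mathcal{V}_n)\neq\emptyset$ forces $V\cap(\cup\mathcal{V}_n)\neq\emptyset$. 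For $\wSk$ one pushes a witnessing dense subset $Y_Z$ of $Z$ forward, using that $f(Y_Z)$ is dense in $W$ and that $f(f^{-1}(\cup\mathcal{V}_n))=\cup\mathcal{V}_n$.

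With the lemma in hand all three parts follow uniformly. Fix $P\in\{\aS,\wS,\wSk\}$ and assume $(X,\tau^\prime)$ is productively $P$. Given any space $Y$ with property $P$, productivity gives that $(X,\tau^\prime)\times Y$ has $P$; since $\phi$ is a continuous surjection onto $(X,\tau)\times Y$, the preservation lemma yields that $(X,\tau)\times Y$ has $P$. As $Y$ was arbitrary, $(X,\tau)$ is productively $P$.

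The only genuine work is the preservation lemma, and within it the main obstacle is matching each weak variation's closure/density clause to the behaviour of a continuous surjection: one needs $f(\overline{A})\subseteq\overline{f(A)}$ for the $\aS$ closure condition, $f(f^{-1}(B))=B$ together with preservation of nonemptiness for the $\wS$ finite-intersection condition, and the density of $f(Y_Z)$ for the $\wSk$ dense-set condition. None of these is hard, but each must be verified separately, since the three selection principles certify different combinatorial data.
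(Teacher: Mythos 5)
Your argument is correct and is essentially the route the paper intends: the proposition is stated there without proof, but your preservation lemma is exactly the content of the paper's Corollary~\ref{C12} (continuous surjective images preserve $\aS$, $\wS$ and $\wSk$ --- indeed for $\aS$ the image is even Scheepers), applied to the continuous bijection $\iota\times\mathrm{id}_Y\colon(X,\tau^\prime)\times Y\to(X,\tau)\times Y$. The only cosmetic point is that in defining $\mathcal{V}_n$ you should select, for each $V^\prime\in\mathcal{V}_n^\prime$, a single $U\in\mathcal{U}_n$ with $f^{-1}(U)=V^\prime$ (rather than taking all such $U$), so that $\mathcal{V}_n$ is guaranteed to be finite; the inclusion $\cup\mathcal{V}_n^\prime\subseteq f^{-1}(\cup\mathcal{V}_n)$ that your argument actually uses is unaffected.
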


\begin{Th}
\label{T37}
A $H$-closed space $X$ is
\begin{enumerate}[wide=0pt,label={\upshape(\arabic*)},leftmargin=*]
  \item productively $\aS$.
  \item productively $\wS$.
  \item productively $\wSk$.
\end{enumerate}
\end{Th}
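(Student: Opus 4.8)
The plan is to reduce each product assertion to the corresponding property of the second factor, using the neighbourhood-assignment characterisations of Theorem~\ref{T52} together with the $H$-closedness of $X$ to control the first coordinate. Fix a space $Y$ with the property in question and a sequence $(\mathcal{W}_n)$ of open covers of $X\times Y$. For each $n$ and each $(x,y)$ I would choose a basic box $U^{(n)}_{(x,y)}\times V^{(n)}_{(x,y)}\ni(x,y)$ contained in some $W^{(n)}_{(x,y)}\in\mathcal{W}_n$, with $U^{(n)}_{(x,y)}$ open in $X$ and $V^{(n)}_{(x,y)}$ open in $Y$. For fixed $n,y$ the family $\{U^{(n)}_{(x,y)}:x\in X\}$ covers $X$, so by $H$-closedness there is a finite $F^{(n)}_y\subseteq X$ with $D^{(n)}_y:=\bigcup_{x\in F^{(n)}_y}U^{(n)}_{(x,y)}$ dense in $X$. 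Putting $\n_n(y):=\bigcap_{x\in F^{(n)}_y}V^{(n)}_{(x,y)}$ gives a sequence $(\n_n)$ of neighbourhood assignments of $Y$ with $D^{(n)}_y\times\n_n(y)\subseteq\bigcup_{x\in F^{(n)}_y}W^{(n)}_{(x,y)}$ for every $y$.

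Next I would feed $(\n_n)$ into the property of $Y$. By the appropriate clause of Theorem~\ref{T52} there is a sequence $(G_n)$ of finite subsets of $Y$ with $\{\n_n(G_n):n\in\mathbb{N}\}$ lying in $\overline{\Omega}$ (for $\aS$), in $\Omega_D$ (for $\wS$), or in $\Omega^D$ (for $\wSk$), where $\n_n(G_n)=\bigcup_{y\in G_n}\n_n(y)$. In each case I set $\mathcal{V}_n:=\{W^{(n)}_{(x,y)}:y\in G_n,\ x\in F^{(n)}_y\}$, a finite subset of $\mathcal{W}_n$ with $\bigcup\mathcal{V}_n\supseteq\bigcup_{y\in G_n}\big(D^{(n)}_y\times\n_n(y)\big)$. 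The key mechanism is the transfer of a ``meeting'' statement from $Y$ to the product: if $O\times P$ is a nonempty box and $y_0\in G_n$ satisfies $P\cap\n_n(y_0)\neq\emptyset$, then density of $D^{(n)}_{y_0}$ in $X$ yields $O\cap D^{(n)}_{y_0}\neq\emptyset$, whence $(O\times P)\cap\bigcup\mathcal{V}_n\neq\emptyset$. This is exactly how $H$-closedness converts density in the $Y$-factor into density in $X\times Y$.

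For the $\aS$ case I would take a finite $F\subseteq X\times Y$, project to $E:=\pi_Y(F)$, use $\{\n_n(G_n)\}\in\overline{\Omega}$ to get $n$ with $E\subseteq\overline{\n_n(G_n)}$, and apply the box argument above to each neighbourhood of each point of $F$ to conclude $F\subseteq\overline{\bigcup\mathcal{V}_n}$; thus $\{\bigcup\mathcal{V}_n\}\in\overline{\Omega}$ and $X\times Y$ is $\aS$ by Theorem~\ref{T52}(1). The $\wS$ case is parallel: density of $\bigcup_n\n_n(G_n)$ in $Y$ combined with density of the $D^{(n)}_y$ gives density of $\bigcup_n\bigcup\mathcal{V}_n$, and for a finite family of boxes $\{O_i\times P_i\}$ one applies $\{\n_n(G_n)\}\in\Omega_D$ to $\{P_i\}$ to find a single $n$ for which each $P_i$ meets $\n_n(G_n)$, and then the box argument makes every $O_i\times P_i$ meet $\bigcup\mathcal{V}_n$; hence $X\times Y$ is $\wS$ by Theorem~\ref{T52}(2).

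The step I expect to be the main obstacle is the $\wSk$ case, where the witnessing dense set $W\subseteq X\times Y$ must satisfy the \emph{genuine} containment $F\subseteq\bigcup\mathcal{V}_n$ for infinitely many $n$, rather than merely $F\subseteq\overline{\bigcup\mathcal{V}_n}$. The difficulty is that $H$-closedness makes $D^{(n)}_y$ only dense in $X$, so a point's first coordinate need not actually lie in $\bigcup\mathcal{V}_n$. Taking $Z\subseteq Y$ to be the dense set supplied by $\Omega^D$ and setting $W:=\bigcup_n\bigcup_{y\in G_n}D^{(n)}_y\times(\n_n(y)\cap Z)$ gives a dense subset of $X\times Y$ whose points have $Y$-coordinate in $Z$, so $\Omega^D$ for $Y$ produces infinitely many $n$ absorbing those coordinates inside $\bigcup_{y\in G_n}\n_n(y)$; the obstruction is to \emph{synchronise}, along those same indices $n$, the first coordinates with the correct dense slices $D^{(n)}_y$, since the slice realising $b_j\in\n_n(y)$ varies with $n$. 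To overcome this I would refine the selection so that, along an infinite set of indices, the chosen $x$-slices form a $\gamma$-type (cofinitely absorbing) family on a fixed dense subset of $X$; equivalently, I would first prove that every $H$-closed space admits a selection of the form $\S1(\mathcal{G}_K,\mathcal{G}_{\Gamma_D})$ and then invoke the productively-$\wSk$ criterion for such spaces, which is the clean route that sidesteps the coordinate-matching problem above.
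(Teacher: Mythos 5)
Your treatment of cases (1) and (2) is correct and is essentially the paper's own argument: the paper likewise fixes, for each $y$ and each $n$, a finite family of boxes whose union is dense in the slice $X\times\{y\}$ (this is where $H$-closedness enters), passes to the open cover of $Y$ formed by the intersections of the $Y$-sides of those boxes, applies the $\aS$ (resp.\ $\wS$) property of $Y$, and recombines; the ``box argument'' you describe for transferring a meeting/closure statement from the $Y$-factor to $X\times Y$ is exactly the mechanism the paper uses. Your only real deviation is cosmetic: you route the second step through the neighbourhood-assignment characterisation of Theorem~\ref{T52} rather than reducing to product covers $\mathcal{V}_n\times\mathcal{W}_n$ as the paper does.

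Case (3), however, is not proved. You correctly isolate the genuine difficulty --- $\Omega^D$ requires actual containment $F\subseteq\cup\mathcal{V}_n$ for finite subsets of a dense set, whereas $H$-closedness only places first coordinates in dense open sets $D^{(n)}_y$ that vary with $n$ and $y$ --- but your resolution defers to the claim that every $H$-closed space satisfies $\S1(\mathcal{G}_K,\mathcal{G}_{\Gamma_D})$, which you do not prove and which is far from obvious: it is a selection principle about covers by $G_\delta$ sets, and the paper obtains it only for spaces possessing a dense $\sigma$-compact subspace (Corollary~\ref{C24}), a property that an $H$-closed space need not have. So the appeal to Theorem~\ref{T40} is not available as things stand. (For comparison, the paper proves only case (1) in detail and asserts that (2) and (3) ``can be easily verified''; the synchronisation problem you identify is real, and the natural direct repair --- showing that the countably many dense open sets $D^{(n)}_y$ arising in the construction have dense intersection $D$ in the $H$-closed space $X$, so that $W=D\times Z$ serves as the witnessing dense set --- itself requires a Baire-type property of $H$-closed spaces that would have to be supplied.) Until either that lemma or your proposed $\S1(\mathcal{G}_K,\mathcal{G}_{\Gamma_D})$ claim is established, assertion (3) remains open in your write-up.
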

\begin{proof}

Let $Y$ be a $\aS$ space. Consider a sequence $(\mathcal{U}_n)$ of open covers of $X\times Y$. Without loss of generality assume that $\mathcal{U}_n=\mathcal{V}_n\times\mathcal{W}_n$ for each $n$, where $\mathcal{V}_n$ and $\mathcal{W}_n$ are respectively open covers of $X$ and $Y$. Fix $y\in Y$. Since $X\times\{y\}$ is $H$-closed, there is a sequence $(\mathcal{V}_n^y\times\mathcal{W}_n^y)$ such that for each $n$ $\mathcal{V}_n^y\times\mathcal{W}_n^y$ is a finite subset of $\mathcal{U}_n$ and $\cup(\mathcal{V}_n^y\times\mathcal{W}_n^y)$ is dense in $X\times\{y\}$. Consider the open cover $\mathcal{U}_n^\prime=\{U_n^y : y\in Y\}$ of $Y$, where for each $n$ $U_n^y=\cap\mathcal{W}_n^y$. Apply the $\aS$ property of $Y$ to $(\mathcal{U}_n^\prime)$ to obtain a sequence $(\mathcal{H}_n)$ such that for each $n$ $\mathcal{H}_n$ is a finite subset of $\mathcal{U}_n^\prime$ and for each finite set $F\subseteq Y$ there is a $n$ such that $F\subseteq\overline{\cup\mathcal{H}_n}$. For each $n$ choose $\mathcal{H}_n=\{U_n^{y_1},U_n^{y_2},\dotsc,U_n^{y_{k_n}}\}$. Now for each $n$ $\mathcal{K}_n=\cup_{i=1}^{k_n}(\mathcal{V}_n^{y_i}
\times\mathcal{W}_n^{y_i})$ is a finite subset of $\mathcal{U}_n$. Clearly the sequence $(\mathcal{K}_n)$ witnesses that  $X\times Y$ is $\aS$. Thus $X$ is productively $\aS$. The other two cases can be easily verified.
\end{proof}

\begin{Th}
\label{T38}
If $X$ satisfies $\S1(\mathcal{G}_K,\mathcal{G}_{\overline{\Gamma}})$, then $X$ is productively $\aS$.
\end{Th}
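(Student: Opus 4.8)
The plan is to parallel the proof of Theorem~\ref{T37}, replacing the use of $H$-closedness by the selection hypothesis $\S1(\mathcal{G}_K,\mathcal{G}_{\overline{\Gamma}})$ on $X$. Fix an $\aS$ space $Y$ and a sequence $(\mathcal{U}_n)$ of open covers of $X\times Y$; I would first reduce to the case in which every member of each $\mathcal{U}_n$ is a basic open rectangle $U\times V$ with $U$ open in $X$ and $V$ open in $Y$, so that finite subfamilies can be analysed coordinatewise. Since a finite union of rectangles has a closure that splits as a finite union of products of closures, the identity $\overline{U\times V}=\overline{U}\times\overline{V}$ will be the basic tool for transferring density and closure information between the two factors, exactly as in the verification of Theorem~\ref{T37}.

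Next I would manufacture $\mathcal{G}_K$-covers of $X$ out of the product covers. Partition $\mathbb{N}=\bigsqcup_{m\in\mathbb{N}}N_m$ into infinitely many infinite pieces. Fix $m$ and a compact set $C\subseteq X$. For each $y\in Y$ and each $n\in N_m$ the compact slice $C\times\{y\}$ is covered by a finite subfamily of $\mathcal{U}_n$, whose common second coordinate $N_n(C,y)=\bigcap\{V : U\times V\ \text{in that finite family}\}$ is an open neighbourhood of $y$; this is precisely the intersection device of Theorem~\ref{T37} that makes closure-membership in $N_n(C,y)$ propagate to each individual rectangle. Applying the $\aS$ property of $Y$ to the sequence $(\{N_n(C,y) : y\in Y\})_{n\in N_m}$ yields, for each $n\in N_m$, a finite subfamily of $\mathcal{U}_n$ whose unions form, on the $Y$-side, a member of $\overline{\Omega}$ for $Y$; collecting their first coordinates and intersecting over $n\in N_m$ produces a $G_\delta$ set $g^{C}_{m}\supseteq C$. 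The family $\mathcal{A}_m=\{g^{C}_{m} : C\subseteq X\ \text{compact}\}$ then belongs to $\mathcal{G}_K$, and $(\mathcal{A}_m)_m$ is the sequence to which the hypothesis is applied.

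By $\S1(\mathcal{G}_K,\mathcal{G}_{\overline{\Gamma}})$ I obtain compact sets $C_m$ and selected $G_\delta$ sets $g^{C_m}_{m}$ with $\{g^{C_m}_{m} : m\in\mathbb{N}\}\in\mathcal{G}_{\overline{\Gamma}}$, that is, every $x\in X$ lies in $\overline{g^{C_m}_{m}}$ for all but finitely many $m$. Letting $\mathcal{K}_n$ be the finite subfamily of $\mathcal{U}_n$ produced for $C_m$ when $n\in N_m$ gives finite sets $\mathcal{K}_n\subseteq\mathcal{U}_n$. To check $\{\cup\mathcal{K}_n : n\in\mathbb{N}\}\in\overline{\Omega}$, take a finite $F\subseteq X\times Y$ with $X$-trace $F_X$ and $Y$-trace $F_Y$; choose $m$ so large that $F_X\subseteq\overline{g^{C_m}_{m}}$, and then $n\in N_m$ with $F_Y$ inside the closure of the corresponding $Y$-union (possible because that union is a member of $\overline{\Omega}$ for $Y$). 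The intersection devices on both coordinates, together with $\overline{U\times V}=\overline{U}\times\overline{V}$, should then place every point of $F$ in $\overline{\cup\mathcal{K}_n}$, witnessing that $X\times Y$ is $\aS$.

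I expect the main obstacle to be the first half of the second step: arranging that the sets $g^{C}_{m}$ genuinely form a member of $\mathcal{G}_K$ (so that the hypothesis is even applicable) while simultaneously retaining enough of the $Y$-side $\aS$-selection to run the final closure-matching. The delicate point is the coordinatewise bookkeeping, namely keeping the common-second-coordinate neighbourhoods $N_n(C,y)$ and the $\aS$-selection on $Y$ synchronised with the $G_\delta$ sets fed into $\S1(\mathcal{G}_K,\mathcal{G}_{\overline{\Gamma}})$; once the identity $\overline{U\times V}=\overline{U}\times\overline{V}$ and the intersection trick of Theorem~\ref{T37} are in place, the actual closure transfer is routine.
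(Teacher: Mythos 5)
Your plan works once the bookkeeping is written out, but it is organised quite differently from the paper's proof, essentially with the two selections performed in the opposite order. The paper reduces to covers closed under finite unions (not rectangle covers), constructs for each compact $C$ and each $n$ a single $G_\delta$ set $G_n(C)\supseteq C$ such that \emph{every} slice $G_n(C)\times\{y\}$, $y\in Y$, lies in one member of $\mathcal{U}_n$, applies $\S1(\mathcal{G}_K,\mathcal{G}_{\overline{\Gamma}})$ once to the covers $\mathcal{G}_n=\{G_n(C) : C\ \text{compact}\}$ to pick $C_n$ with each $x\in\overline{G_n(C_n)}$ for all but finitely many $n$, and only then invokes the $\aS$ property of $Y$ a single time, on the induced covers $\mathcal{W}_n=\{V : G_n(C_n)\times V\subseteq U\ \text{for some}\ U\in\mathcal{U}_n\}$; the matching pairs ``cofinitely many $n$'' on the $X$-side against ``infinitely many $n$'' on the $Y$-side over the \emph{same} index set, with no partition of $\mathbb{N}$. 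You instead run the $\aS$ selection on $Y$ first (once per compact set per block $N_m$), harvest the resulting countably many tube bases into the $G_\delta$ sets $g^{C}_{m}$, and only afterwards select on $X$; the block partition is then genuinely necessary to have a sequence of $\mathcal{G}_K$ covers to feed into the hypothesis, and your matching is ``cofinitely many blocks $m$, then some $n\in N_m$''. One small point of care: in forming $g^{C}_{m}$ you must take the \emph{union} of the first coordinates within each finite tube family for a fixed pair $(n,y)$ (so that it still contains $C$) and only then intersect over the countably many pairs $(n,y)$ with $y$ in the selected finite sets. What your order buys is that each $g^{C}_{m}$ is an intersection of only countably many open tube sets, which is automatic; the paper's $G_n(C)$ must work for every $y\in Y$ simultaneously, which requires a countable reduction of the tubes on the $Y$-side. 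What the paper's order buys is brevity: one application of each selection hypothesis and a one-line closure transfer. Both arguments ultimately rest on the same two facts, namely $\overline{U\times V}=\overline{U}\times\overline{V}$ and the interplay of $\mathcal{G}_{\overline{\Gamma}}$ with $\overline{\Omega}$, so the proofs are cousins rather than identical.
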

\begin{proof}
Let $Y$ be a space having the property $\aS$. To show that $X\times Y$ satisfies $\aS$ we choose a sequence $(\mathcal{U}_n)$ of open covers of $X\times Y$. Without loss of generality we assume that for each $n$ $\mathcal{U}_n$ is closed under finite unions. For each compact set $C\subseteq X$ and each $n$ choose a $G_\delta$ set $G_n(C)\subseteq X$ such that $C\subseteq G_n(C)$. Then for each $y\in Y$ and each $n$ there exists a $U\in\mathcal{U}_n$ such that $G_n(C)\times\{y\}\subseteq U$. Clearly $\mathcal{G}_n=\{G_n(C) : C\;\text{is a compact subset of}\; X\}\in\mathcal{G}_K$ for each $n$. Since $X$ satisfies $\S1(\mathcal{G}_K,\mathcal{G}_{\overline{\Gamma}})$, we obtain a sequence $(C_n)$ of compact subsets of $X$ such that each $x\in X$ belongs to $\overline{G_n(C_n)}$ for all but finitely many $n$. For each $n$ choose $\mathcal{W}_n=\{V : V\;\text{is an open set in $Y$ and there is a $U\in\mathcal{U}_n$ such that}\; G_n(C_n)\times V\subseteq U\}$. Clearly $(\mathcal{W}_n)$ is a sequence of open covers of $Y$. Since $Y$ satisfies $\aS$, there exists a sequence $(\mathcal{K}_n)$ such that for each $n$ $\mathcal{K}_n$ is a finite subset of $\mathcal{W}_n$ and each finite set $F\subseteq Y$ is contained in $\overline{\cup\mathcal{K}_n}$ for infinitely many $n$. For each $n$ and each $V\in\mathcal{K}_n$ choose a $U_V\in\mathcal{U}_n$ such that $G_n(C_n)\times V\subseteq U_V$. For each $n$ let $\mathcal{V}_n=\{U_V : V\in\mathcal{K}_n\}$. Observe that the sequence $(\mathcal{V}_n)$ fulfils the requirement for 
$X$ to be productively $\aS$.
\end{proof}
Similarly we obtain the following.
\begin{Th}
\label{T39}
If $X$ satisfies $\S1(\mathcal{G}_K,\mathcal{G}_{D_\Gamma})$, then $X$ is productively $\wS$.
\end{Th}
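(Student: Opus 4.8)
The plan is to follow the proof of Theorem~\ref{T38} in its skeleton, replacing the $\mathcal{G}_{\overline{\Gamma}}$-selection on the factor $X$ by the $\mathcal{G}_{D_\Gamma}$-selection supplied by $\S1(\mathcal{G}_K,\mathcal{G}_{D_\Gamma})$, and the $\aS$ property of the second factor by the $\wS$ property. Let $Y$ be any $\wS$ space and let $(\mathcal{U}_n)$ be a sequence of open covers of $X\times Y$; as before we may assume each $\mathcal{U}_n$ is closed under finite unions. For every compact $C\subseteq X$ and every $n$ I would fix a $G_\delta$ set $G_n(C)$ with $C\subseteq G_n(C)\subseteq X$ so that each slice $G_n(C)\times\{y\}$ lies inside some member of $\mathcal{U}_n$, and set $\mathcal{G}_n=\{G_n(C):C\text{ compact}\}\in\mathcal{G}_K$. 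Applying $\S1(\mathcal{G}_K,\mathcal{G}_{D_\Gamma})$ to $(\mathcal{G}_n)$ yields compact sets $C_n\subseteq X$ with $\{G_n(C_n):n\in\mathbb{N}\}\in\mathcal{G}_{D_\Gamma}$; explicitly, for every nonempty open $A\subseteq X$ the set $\{n:A\cap G_n(C_n)=\emptyset\}$ is finite.

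Next I would manufacture covers of $Y$ exactly as in Theorem~\ref{T38}: put $\mathcal{W}_n=\{V\subseteq Y\text{ open}: G_n(C_n)\times V\subseteq U\text{ for some }U\in\mathcal{U}_n\}$, which is an open cover of $Y$ for each $n$. Feeding $(\mathcal{W}_n)$ into the $\wS$ property of $Y$ produces finite sets $\mathcal{K}_n\subseteq\mathcal{W}_n$ such that $\{\cup\mathcal{K}_n:n\in\mathbb{N}\}\in\Omega_D$, that is, for every finite family $\mathcal{G}$ of nonempty open subsets of $Y$ the set $\{n: B\cap(\cup\mathcal{K}_n)\neq\emptyset\text{ for all }B\in\mathcal{G}\}$ is infinite. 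For each $n$ and each $V\in\mathcal{K}_n$ pick $U_V\in\mathcal{U}_n$ with $G_n(C_n)\times V\subseteq U_V$ and set $\mathcal{V}_n=\{U_V:V\in\mathcal{K}_n\}$, a finite subset of $\mathcal{U}_n$. The containment that drives everything is $\cup\mathcal{V}_n\supseteq G_n(C_n)\times(\cup\mathcal{K}_n)$, which turns a nonempty intersection on each separate factor into a nonempty intersection in the product.

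The final verification is where the argument genuinely departs from Theorem~\ref{T38}, and it is the step I expect to require the most care. Given a finite collection $\mathcal{F}$ of nonempty open subsets of $X\times Y$, I would first shrink each member to a basic rectangle, so without loss of generality $\mathcal{F}=\{A_1\times B_1,\dots,A_p\times B_p\}$ with each $A_j\subseteq X$ and $B_j\subseteq Y$ nonempty open. The $\mathcal{G}_{D_\Gamma}$-property gives, for each $j$, that $A_j\cap G_n(C_n)\neq\emptyset$ for all but finitely many $n$; as $p$ is finite there is an $n_0$ with $A_j\cap G_n(C_n)\neq\emptyset$ for all $j$ and all $n\geq n_0$. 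Independently, the $\wS$-conclusion for $Y$ applied to $\{B_1,\dots,B_p\}$ yields an infinite set $M\subseteq\mathbb{N}$ with $B_j\cap(\cup\mathcal{K}_n)\neq\emptyset$ for all $j$ and all $n\in M$. The set $M\cap\{n:n\geq n_0\}$ is still infinite, and for every $n$ in it we get $(A_j\cap G_n(C_n))\times(B_j\cap(\cup\mathcal{K}_n))\neq\emptyset$, hence $(A_j\times B_j)\cap(\cup\mathcal{V}_n)\neq\emptyset$ for all $j$. Thus $(\mathcal{V}_n)$ witnesses that $X\times Y$ is $\wS$, so $X$ is productively $\wS$. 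The main obstacle is exactly this bookkeeping: the $\wS$ property only delivers \emph{infinitely many} good indices rather than \emph{cofinitely many}, so I must intersect an infinite index set coming from $Y$ with a cofinite index set coming from $X$ and confirm the result remains infinite, and it is the reduction to rectangular members of $\mathcal{F}$ that makes this separation of variables legitimate.
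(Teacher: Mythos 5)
Your proposal is correct and is essentially the paper's own argument: the paper proves Theorem~\ref{T39} by declaring it ``similar'' to Theorem~\ref{T38}, and you carry out exactly that adaptation, replacing the $\mathcal{G}_{\overline{\Gamma}}$-selection by the $\mathcal{G}_{D_\Gamma}$-selection on $X$ and the $\aS$ hypothesis on $Y$ by $\wS$. Your final bookkeeping step (intersecting the cofinite index set coming from $\mathcal{G}_{D_\Gamma}$ with the infinite index set coming from the $\wS$ property of $Y$, after reducing $\mathcal{F}$ to basic rectangles) is precisely the ``necessary modification'' the paper leaves implicit, and it is carried out correctly.
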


\begin{Th}
\label{T40}
If $X$ satisfies $\S1(\mathcal{G}_K,\mathcal{G}_{\Gamma_D})$, then $X$ is productively $\wSk$.
\end{Th}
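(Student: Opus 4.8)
The plan is to follow the template of the proofs of Theorem~\ref{T38} and Theorem~\ref{T39}, adapting only the final bookkeeping to the ``in the sense of Ko\v{c}inac'' setting. Let $Y$ be a $\wSk$ space and let $(\mathcal{U}_n)$ be a sequence of open covers of $X\times Y$; as in Theorem~\ref{T38} I would first assume that each $\mathcal{U}_n$ is closed under finite unions. For each $n$ and each compact set $C\subseteq X$ I would produce a $G_\delta$ set $G_n(C)\subseteq X$ with $C\subseteq G_n(C)$ such that for every $y\in Y$ there is some $U\in\mathcal{U}_n$ with $G_n(C)\times\{y\}\subseteq U$; then $\mathcal{G}_n=\{G_n(C):C\text{ compact in }X\}\in\mathcal{G}_K$ for each $n$. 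This is exactly the construction carried out in Theorem~\ref{T38}, the nontrivial ingredient being the tube lemma, together with closure of $\mathcal{U}_n$ under finite unions, to pass from the compact slices $C\times\{y\}$ to the $G_\delta$ hulls $G_n(C)$.

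The essential step is then to feed the correct selection hypothesis into each factor. Applying $\S1(\mathcal{G}_K,\mathcal{G}_{\Gamma_D})$ to the sequence $(\mathcal{G}_n)$ yields compact sets $C_n\subseteq X$ with $\{G_n(C_n):n\in\mathbb{N}\}\in\mathcal{G}_{\Gamma_D}$. By the definition of $\mathcal{G}_{\Gamma_D}$ this provides a dense set $D\subseteq X$ such that, for each $x\in D$, the set $\{n:x\notin G_n(C_n)\}$ is finite, i.e.\ $x\in G_n(C_n)$ for all but finitely many $n$. Next, for each $n$ I would set $\mathcal{W}_n=\{V\subseteq Y\text{ open}:\exists\,U\in\mathcal{U}_n,\ G_n(C_n)\times V\subseteq U\}$; by the slicing property of $G_n(C_n)$ each $\mathcal{W}_n$ is an open cover of $Y$. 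Applying the $\wSk$ property of $Y$ to $(\mathcal{W}_n)$ produces a dense set $E\subseteq Y$ and finite sets $\mathcal{K}_n\subseteq\mathcal{W}_n$ such that every finite $F\subseteq E$ is contained in $\cup\mathcal{K}_n$ for infinitely many $n$. Finally, for each $n$ and each $V\in\mathcal{K}_n$ I would pick $U_V\in\mathcal{U}_n$ with $G_n(C_n)\times V\subseteq U_V$ and put $\mathcal{V}_n=\{U_V:V\in\mathcal{K}_n\}$, a finite subset of $\mathcal{U}_n$.

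It remains to verify that $(\mathcal{V}_n)$ witnesses the $\wSk$ property of $X\times Y$ with respect to the dense set $D\times E$. Given a finite $F\subseteq D\times E$, write $F_X$ and $F_Y$ for its (finite) projections into $X$ and $Y$. Since $F_X\subseteq D$ is finite, $F_X\subseteq G_n(C_n)$ for all but finitely many $n$; since $F_Y\subseteq E$ is finite, $F_Y\subseteq\cup\mathcal{K}_n$ for infinitely many $n$. Hence there are infinitely many $n$ for which both inclusions hold simultaneously, and for any such $n$ and any $(x,y)\in F$ we have $x\in G_n(C_n)$ and $y\in V$ for some $V\in\mathcal{K}_n$, whence $(x,y)\in G_n(C_n)\times V\subseteq U_V\in\mathcal{V}_n$. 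Thus $F\subseteq\cup\mathcal{V}_n$ for infinitely many $n$, as required.

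The main obstacle, and the genuinely new feature compared with Theorem~\ref{T38} and Theorem~\ref{T39}, is that here \emph{both} factors contribute a dense witnessing set ($D$ from $\mathcal{G}_{\Gamma_D}$ on the $X$-side and $E$ from $\wSk$ on the $Y$-side), so the crux is to check that the product $D\times E$ is a legitimate dense witness and that the ``for all but finitely many $n$'' behaviour on the $X$-coordinate combines with the ``for infinitely many $n$'' behaviour on the $Y$-coordinate to yield ``for infinitely many $n$'' for the pair. A secondary technical point, shared with the earlier two theorems, is the construction of the $G_\delta$ hulls $G_n(C)$ that are simultaneously $G_\delta$, contain $C$, and slice correctly over every $y\in Y$; this is where the tube lemma and closure under finite unions do the work.
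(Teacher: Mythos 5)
Your argument is exactly the one the paper intends: Theorem~\ref{T40} is stated without proof as being ``similar'' to Theorem~\ref{T38}, and your proposal reproduces that template faithfully, with the only genuinely new ingredients --- taking $D\times E$ as the dense witness and intersecting the ``all but finitely many $n$'' set coming from $\mathcal{G}_{\Gamma_D}$ with the ``infinitely many $n$'' set coming from the $\wSk$ property of $Y$ --- handled correctly. No substantive difference from the paper's approach.
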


The following result is an improvement of \cite[Lemma 44]{WCPSP}.
\begin{Th}
\label{T41}
If any dense subspace of $X$ satisfies $\S1(\mathcal{G}_K,\mathcal{G}_\Gamma)$, then $X$ also satisfies $\S1(\mathcal{G}_K,\mathcal{G}_{\Gamma_D})$.
\end{Th}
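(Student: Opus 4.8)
The plan is to restrict the given $\mathcal{G}_K$-families from $X$ to the dense subspace witnessing $\S1(\mathcal{G}_K,\mathcal{G}_\Gamma)$, carry out the selection inside that subspace, and then read the chosen $G_\delta$ sets back in $X$, using the dense subspace itself as the dense set demanded by $\mathcal{G}_{\Gamma_D}$. So let $D$ be a dense subspace of $X$ satisfying $\S1(\mathcal{G}_K,\mathcal{G}_\Gamma)$ and let $(\mathcal{U}_n)$ be a sequence from $\mathcal{G}_K$ for $X$. First I would record two elementary transfer facts: if $U=\bigcap_k O_k$ is a $G_\delta$ set of $X$ (each $O_k$ open in $X$), then $U\cap D=\bigcap_k(O_k\cap D)$ is a $G_\delta$ set of $D$; and every compact $C\subseteq D$ is compact in $X$. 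Consequently, for each $n$ the family $\mathcal{U}_n^D=\{U\cap D : U\in\mathcal{U}_n\}$ consists of $G_\delta$ subsets of $D$, and for each compact $C\subseteq D$ there is $U\in\mathcal{U}_n$ with $C\subseteq U$, whence $C\subseteq U\cap D$.

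Modulo the normalization discussed below, this makes $\mathcal{U}_n^D\in\mathcal{G}_K$ for $D$. Applying $\S1(\mathcal{G}_K,\mathcal{G}_\Gamma)$ of $D$ to $(\mathcal{U}_n^D)$ then yields, for each $n$, a member $G_n\cap D\in\mathcal{U}_n^D$ (pick $G_n\in\mathcal{U}_n$ realizing this intersection) with $\{G_n\cap D : n\in\mathbb{N}\}\in\mathcal{G}_\Gamma$ for $D$. Since a family lies in $\mathcal{G}_\Gamma$ exactly when it is infinite and none of its points is omitted by infinitely many members (the condition that every infinite subfamily still covers is equivalent to this), I obtain that for each $x\in D$ the set $\{n : x\notin G_n\cap D\}=\{n : x\notin G_n\}$ is finite. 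As each $G_n\in\mathcal{U}_n$ is a $G_\delta$ set of $X$, taking the dense set $Y=D$ shows $\{G_n : n\in\mathbb{N}\}\in\mathcal{G}_{\Gamma_D}$, which is precisely the selection required for $\S1(\mathcal{G}_K,\mathcal{G}_{\Gamma_D})$ in $X$.

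The one point needing care, and the step I expect to be the main obstacle, is the clause $D\notin\mathcal{U}_n^D$ in the definition of $\mathcal{G}_K$ for $D$: it can fail, since a dense $G_\delta$ set $U$ with $D\subseteq U$ need not equal $X$, and then $U\cap D=D$. I would dispose of this by splitting $\mathbb{N}=A\cup B$, where $A=\{n : \exists\,U\in\mathcal{U}_n\text{ with }D\subseteq U\}$ and $B=\mathbb{N}\setminus A$. For $n\in A$ I simply let $G_n$ be one such $U$, so that $D\subseteq G_n$ and no point of $D$ is omitted at stage $n$. For $n\in B$ the family $\mathcal{U}_n^D$ genuinely lies in $\mathcal{G}_K$ for $D$, so the selection principle of $D$ applies to the subsequence indexed by $B$ when $B$ is infinite; if $B$ is finite I just choose $G_n\in\mathcal{U}_n$ arbitrarily for those finitely many indices.

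Combining the two index sets, for each $x\in D$ the set $\{n : x\notin G_n\}$ is contained in the finite exceptional set produced by the $B$-selection, hence finite, and the conclusion of the previous paragraph goes through verbatim with witnessing dense set $Y=D$. The remaining verifications (restriction of $G_\delta$ sets, absoluteness of compactness, and the $\mathcal{G}_\Gamma$ characterization) are routine, so the heart of the argument is the restriction-and-lift scheme together with this splitting to repair the $D\notin\mathcal{U}_n^D$ normalization.
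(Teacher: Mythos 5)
Your proposal is correct and follows essentially the same route as the paper: restrict the $\mathcal{G}_K$-families to the dense subspace $D$, apply $\S1(\mathcal{G}_K,\mathcal{G}_\Gamma)$ there, and use $D$ itself as the dense witness for $\mathcal{G}_{\Gamma_D}$ in $X$. The only difference is that you explicitly repair the normalization $D\notin\mathcal{U}_n^D$ via the $A/B$ splitting, a detail the paper's (much terser) proof passes over silently.
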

\begin{proof}
Let $Y$ be a dense subset of $X$ having the property $\S1(\mathcal{G}_K,\mathcal{G}_\Gamma)$. We pick a sequence $(\mathcal{U}_n)$ of members of $\mathcal{G}_K$ to show that $X$ has the property $\S1(\mathcal{G}_K,\mathcal{G}_{\Gamma_D})$. Since $Y$ satisfies $\S1(\mathcal{G}_K,\mathcal{G}_\Gamma)$, there exists a sequence $(U_n)$ such that for each $n$ $U_n\in\mathcal{U}_n$ and $\{U_n : n\in\mathbb{N}\}\in\mathcal{G}_\Gamma$ for $Y$. It follows that $\{U_n : n\in\mathbb{N}\}\in\mathcal{G}_{\Gamma_D}$ for $X$ as $Y$ is dense in $X$. Thus $X$ has the property $\S1(\mathcal{G}_K,\mathcal{G}_{\Gamma_D})$.
\end{proof}

Since every $\sigma$-compact space has the property $\S1(\mathcal{G}_K,\mathcal{G}_\Gamma)$ (see \cite[Theorem 22]{WCPSP}), we have the following.
\begin{Cor}
\label{C24}
If a space $X$ has a dense $\sigma$-compact subset, then $X$ satisfies $\S1(\mathcal{G}_K,\mathcal{G}_{\Gamma_D})$.
\end{Cor}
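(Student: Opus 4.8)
The plan is to obtain this as an immediate consequence of Theorem~\ref{T41} combined with the quoted fact that every $\sigma$-compact space enjoys the property $\S1(\mathcal{G}_K,\mathcal{G}_\Gamma)$ \cite[Theorem 22]{WCPSP}. No fresh machinery is needed; the corollary is a pure specialization in which the abstract dense subspace of Theorem~\ref{T41} is instantiated by the given dense $\sigma$-compact set.

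First I would fix a dense $\sigma$-compact subspace $Y$ of $X$. By the cited result, $\sigma$-compactness of $Y$ already yields that $Y$ satisfies $\S1(\mathcal{G}_K,\mathcal{G}_\Gamma)$. Since $Y$ is, by assumption, dense in $X$, it qualifies as a dense subspace satisfying $\S1(\mathcal{G}_K,\mathcal{G}_\Gamma)$, which is exactly the hypothesis of Theorem~\ref{T41}. Applying that theorem with this particular $Y$ delivers the conclusion that $X$ satisfies $\S1(\mathcal{G}_K,\mathcal{G}_{\Gamma_D})$.

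The only point worth noting — and it is entirely routine — is the interplay of the two ``$\Gamma$-type'' families: passing from $\mathcal{G}_\Gamma$ (a genuine cover of $Y$ with the tail-convergence condition on $Y$) to $\mathcal{G}_{\Gamma_D}$ (where the tail condition is only required on a dense set of $X$) is precisely what density of $Y$ buys us, and this transfer is already carried out inside the proof of Theorem~\ref{T41}. Consequently I anticipate no real obstacle here: the corollary follows by a single substitution, and the substantive work has been front-loaded into Theorem~\ref{T41} and into \cite[Theorem 22]{WCPSP}.
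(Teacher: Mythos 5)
Your proposal is correct and follows exactly the paper's route: the paper derives Corollary~\ref{C24} by noting that every $\sigma$-compact space satisfies $\S1(\mathcal{G}_K,\mathcal{G}_\Gamma)$ (citing \cite[Theorem 22]{WCPSP}) and then applying Theorem~\ref{T41} to the dense $\sigma$-compact subspace. Nothing further is needed.
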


\begin{Cor}
\label{C25}
Every separable space satisfies $\S1(\mathcal{G}_K,\mathcal{G}_{\Gamma_D})$.
\end{Cor}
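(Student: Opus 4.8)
The plan is to deduce this immediately from Corollary~\ref{C24}, whose hypothesis is the existence of a dense $\sigma$-compact subset. So the only thing I need to supply is the observation that separability produces such a subset.

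First I would use separability to fix a countable dense subset $D=\{d_n : n\in\mathbb{N}\}$ of $X$. The key (and essentially the only nontrivial) step is to note that $D$ is $\sigma$-compact: every singleton $\{d_n\}$ is a compact subspace of $X$, so the decomposition $D=\bigcup_{n\in\mathbb{N}}\{d_n\}$ exhibits $D$ as a countable union of compact sets. Thus $D$ is a dense $\sigma$-compact subset of $X$, and Corollary~\ref{C24} yields at once that $X$ satisfies $\S1(\mathcal{G}_K,\mathcal{G}_{\Gamma_D})$.

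There is no real obstacle here; the statement is a direct specialization of Corollary~\ref{C24}. If one prefers to argue without invoking $\sigma$-compactness, an alternative route runs through Theorem~\ref{T41}: the countable dense subspace $D$ is $\sigma$-compact and hence satisfies $\S1(\mathcal{G}_K,\mathcal{G}_\Gamma)$ by \cite[Theorem 22]{WCPSP}, so Theorem~\ref{T41} applies with $Y=D$ to give the conclusion. Either way the argument is a one-line reduction, so I would simply record the $\sigma$-compactness of $D$ and cite Corollary~\ref{C24}.
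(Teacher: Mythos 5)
Your proposal is correct and matches the paper's (implicit) argument exactly: a countable dense subset is $\sigma$-compact as a countable union of singletons, so Corollary~\ref{C24} applies directly. Nothing further is needed.
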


Using Theorem~\ref{T39}, Theorem~\ref{T40} (and also Figure~\ref{dig2}) we have the following.
\begin{Cor}
\label{C26}
A separable space $X$ is
\begin{enumerate}[wide=0pt,label={\upshape(\arabic*)},leftmargin=*]
  \item productively $\wS$.
  \item productively $\wSk$.
\end{enumerate}
\end{Cor}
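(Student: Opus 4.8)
The plan is to derive both statements by feeding the output of Corollary~\ref{C25} into Theorems~\ref{T39} and \ref{T40}; the only genuine content beyond quoting these results is a containment between two of the $G_\delta$-cover families, which is the sort of relationship reflected in Figure~\ref{dig2}.

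For part (2) there is essentially nothing to do beyond citation: by Corollary~\ref{C25} a separable space $X$ satisfies $\S1(\mathcal{G}_K,\mathcal{G}_{\Gamma_D})$, and Theorem~\ref{T40} says precisely that this selection principle forces $X$ to be productively $\wSk$.

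For part (1) the target hypothesis is $\S1(\mathcal{G}_K,\mathcal{G}_{D_\Gamma})$, whereas Corollary~\ref{C25} supplies only $\S1(\mathcal{G}_K,\mathcal{G}_{\Gamma_D})$, so first I would verify the inclusion $\mathcal{G}_{\Gamma_D}\subseteq\mathcal{G}_{D_\Gamma}$. Given $\mathcal{U}\in\mathcal{G}_{\Gamma_D}$, fix the dense set $Y\subseteq X$ witnessing its membership, and let $W\subseteq X$ be an arbitrary nonempty open set. Since $Y$ is dense, choose $x\in W\cap Y$; then every $V\in\mathcal{U}$ with $V\cap W=\emptyset$ in particular omits $x$, so $\{V\in\mathcal{U} : V\cap W=\emptyset\}\subseteq\{V\in\mathcal{U} : x\notin V\}$, and the latter set is finite by the choice of $Y$. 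Hence $\mathcal{U}\in\mathcal{G}_{D_\Gamma}$, which gives the inclusion (the $G_\delta$-analogue of the already-noted $\Gamma^D\subseteq\Gamma_D$). Because $\S1(\mathcal{G}_K,\cdot)$ is monotone in its second coordinate---any selector landing in the smaller family also lands in the larger one---this inclusion upgrades $\S1(\mathcal{G}_K,\mathcal{G}_{\Gamma_D})$ to $\S1(\mathcal{G}_K,\mathcal{G}_{D_\Gamma})$, and Theorem~\ref{T39} then yields that $X$ is productively $\wS$.

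I do not expect a serious obstacle: the argument is a short diagram chase, and the single nontrivial observation---the inclusion $\mathcal{G}_{\Gamma_D}\subseteq\mathcal{G}_{D_\Gamma}$---uses only the density of the witnessing set $Y$. The one point to state carefully is the monotonicity of $\S1$ in its second argument, which is immediate from the definition of the selection principle.
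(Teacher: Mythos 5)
Your proposal is correct and follows the same route as the paper: the paper proves this corollary by citing Corollary~\ref{C25} together with Theorems~\ref{T39} and~\ref{T40}, delegating the implication $\S1(\mathcal{G}_K,\mathcal{G}_{\Gamma_D})\Rightarrow\S1(\mathcal{G}_K,\mathcal{G}_{D_\Gamma})$ to the arrow in Figure~\ref{dig2}, which is exactly the inclusion $\mathcal{G}_{\Gamma_D}\subseteq\mathcal{G}_{D_\Gamma}$ you verify via the density of the witnessing set $Y$. Your only addition is to make that arrow explicit, and your argument for it is sound.
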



\begin{Cor}
\label{C27}
For each cardinal $\kappa$, $\mathbb{R}^\kappa$ is
\begin{enumerate}[wide=0pt,label={\upshape(\arabic*)},leftmargin=*]
  \item productively $\wS$
  \item productively $\wSk$
\end{enumerate}
\end{Cor}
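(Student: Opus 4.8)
The plan is to produce a single dense $\sigma$-compact subspace of $\mathbb{R}^\kappa$ that works for \emph{every} cardinal $\kappa$, and then to feed it into the machinery already assembled in Corollary~\ref{C24} together with Theorems~\ref{T39} and~\ref{T40}. For $\kappa\le\mathfrak{c}$ the space $\mathbb{R}^\kappa$ is separable (Hewitt--Marczewski--Pondiczery), so Corollary~\ref{C26} applies verbatim; the whole point of the present statement is to drop the cardinality restriction, and once $\kappa>\mathfrak{c}$ separability is lost. What survives, and what I would exploit, is the existence of a dense $\sigma$-compact subspace, which is exactly the hypothesis of Corollary~\ref{C24}.

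The construction is the crux. Since $\mathbb{R}=\bigcup_{n\in\mathbb{N}}[-n,n]$, I would set $S=\bigcup_{n\in\mathbb{N}}[-n,n]^\kappa$. Each box $[-n,n]^\kappa$ is compact by Tychonoff's theorem, and its subspace topology inherited from $\mathbb{R}^\kappa$ is the product topology, so $S$ is $\sigma$-compact. For density, take a nonempty basic open set $U=\prod_{\alpha<\kappa}U_\alpha$, where $U_\alpha=\mathbb{R}$ off a finite set $F\subseteq\kappa$ and $U_\alpha$ is a nonempty open interval for $\alpha\in F$. Picking $n$ so large that $U_\alpha\cap[-n,n]\neq\emptyset$ for every $\alpha\in F$ and setting the remaining coordinates equal to $0$ produces a point of $U\cap[-n,n]^\kappa\subseteq U\cap S$. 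Hence $S$ is a dense $\sigma$-compact subset of $\mathbb{R}^\kappa$, irrespective of $\kappa$.

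With $S$ at hand, Corollary~\ref{C24} yields that $\mathbb{R}^\kappa$ satisfies $\S1(\mathcal{G}_K,\mathcal{G}_{\Gamma_D})$. I would then upgrade this to $\S1(\mathcal{G}_K,\mathcal{G}_{D_\Gamma})$ via the containment $\mathcal{G}_{\Gamma_D}\subseteq\mathcal{G}_{D_\Gamma}$ recorded in Figure~\ref{dig2}; concretely, if $\mathcal{U}=\{U_n:n\in\mathbb{N}\}\in\mathcal{G}_{\Gamma_D}$ is witnessed by a dense set $Y$, then for any nonempty open $W$ one picks $y\in W\cap Y$, and since $y$ lies in all but finitely many $U_n$, all but finitely many $U_n$ meet $W$, so $\mathcal{U}\in\mathcal{G}_{D_\Gamma}$. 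Consequently $\mathbb{R}^\kappa$ enjoys both $\S1(\mathcal{G}_K,\mathcal{G}_{\Gamma_D})$ and $\S1(\mathcal{G}_K,\mathcal{G}_{D_\Gamma})$. Theorem~\ref{T40} then gives that $\mathbb{R}^\kappa$ is productively $\wSk$, and Theorem~\ref{T39} gives that it is productively $\wS$, which are assertions $(2)$ and $(1)$.

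The only genuinely delicate step is the construction of $S$: the natural temptation is to reduce to the separable case, which collapses for $\kappa>\mathfrak{c}$. The resolution rests on the mildly counter-intuitive observation that, although $\mathbb{R}^\kappa$ is neither $\sigma$-compact nor (for large $\kappa$) separable, the compact boxes $[-n,n]^\kappa$ still sweep out a dense $\sigma$-compact set, precisely because density in the product topology constrains only finitely many coordinates at a time. Everything after that is a direct appeal to the already-established results.
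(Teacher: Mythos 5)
Your proof is correct and follows essentially the same route as the paper: exhibit a dense $\sigma$-compact subset of $\mathbb{R}^\kappa$, invoke Corollary~\ref{C24} to get $\S1(\mathcal{G}_K,\mathcal{G}_{\Gamma_D})$, pass to $\S1(\mathcal{G}_K,\mathcal{G}_{D_\Gamma})$ via Figure~\ref{dig2}, and conclude with Theorems~\ref{T39} and~\ref{T40}. The only (harmless) difference is the witness: you use the bounded functions $\bigcup_n[-n,n]^\kappa$ with a self-contained density/compactness check, whereas the paper uses the finitely supported functions $\{f\in\mathbb{R}^\kappa : |\{i : f(i)\neq 0\}|<\omega\}$, citing Corson.
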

\begin{proof}
Since $\{f\in\mathbb{R}^\kappa : |\{i : f(i)\neq 0\}|<\omega\}$ is a dense $\sigma$-compact subset of $\mathbb{R}^\kappa$ (see \cite[Proposition 4]{NSPS}), by Theorem~\ref{T39}, Theorem~\ref{T40}, Corollary~\ref{C24} and Figure~\ref{dig2}, $\mathbb{R}^\kappa$ has the claimed properties.
\end{proof}

The following example shows that a separable space need not be productively $\aS$.
\begin{Ex}
\label{E8}
\emph{There exists a separable space $X$ and a $\aS$ space $Y$ such that $X\times Y$ is not $\aS$.}\\
Let $\Psi(\mathcal{A})$ be the Isbell-Mr\'{o}wka space \cite{Mrowka} with $|\mathcal{A}|=\mathfrak{c}$, say $\mathcal{A}=\{a_\alpha : \alpha<\mathfrak{c}\}$. Then $X=\Psi(\mathcal{A})$ is separable. Let $D=\{d_\alpha : \alpha<\mathfrak{c}\}$ be the discrete space of cardinality $\mathfrak{c}$ and $Y=D\cup\{d\}$ be the one point compactification of $D$. Observe that $Y$ is $\aS$.

 For each $\alpha<\mathfrak{c}$ define $U_\alpha=(\{a_\alpha\}\cup a_\alpha)\times(Y\setminus\{d_\alpha\})$ and $V_\alpha=X\times\{d_\alpha\}$. Clearly both $U_\alpha$ and $V_\alpha$ are closed. For each $n$ choose $W_n=\{n\}\times Y$. We now define for each $n$ $\mathcal{U}_n=\{U_\alpha : \alpha<\mathfrak{c}\}\cup\{V_\alpha : \alpha<\mathfrak{c}\}\cup\{W_n : n\in\mathbb{N}\}$. Clearly $(\mathcal{U}_n)$ is a sequence of open covers of $X\times Y$. If $X\times Y$ is $\aS$, then there is a sequence $(\mathcal{V}_n)$ such that for each $n$ $\mathcal{V}_n$ is a finite subset of $\mathcal{U}_n$ and for each finite set $F\subseteq X\times Y$ there is a $n$ such that $F\subseteq\overline{\cup\mathcal{V}_n}$. Since $\cup_{n\in\mathbb{N}}\mathcal{V}_n$ is countable, choose a $\beta<\mathfrak{c}$ such that $V_\beta\notin\cup_{n\in\mathbb{N}}\mathcal{V}_n$. It now follows that $\{(a_\beta,d_\beta)\}\nsubseteq
\cup\mathcal{V}_n=\overline{\cup\mathcal{V}_n}$ for all $n$, as $V_\beta$ is the only member of $\mathcal{U}_n$ containing $(a_\beta,d_\beta)$ for each $n$. Which is a contradiction. Hence $X\times Y$ is not $\aS$.
\end{Ex}

By \cite[Theorem 3.9]{coc2} and \cite[Theorem 2.5]{FPM}, there exist two sets of reals $X$ and $Y$ such that both of them are Scheepers but their product $X\times Y$ is not Menger (hence not $\aS$, see Theorem~\ref{T26}). By Corollary~\ref{C26}, we can say that the property $\wS$ ($\wSk$) is preserved under finite products (in the case of sets of reals). But for arbitrary spaces $\wS$ property is not preserved under finite products (see Example~\ref{E10}). We need the following observations on the Pixley-Roy spaces (from \cite{WCPSP,PRS}).

\begin{Lemma}[\!{\cite[Proposition 4]{WCPSP}}]
\label{L1}
For any two spaces $X$ and $Y$, $\PR(X)\times \PR(Y)$ is homeomorphic to $\PR(X\oplus Y)$.
\end{Lemma}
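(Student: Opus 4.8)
The plan is to produce an explicit bijection and to show that it and its inverse carry the canonical Pixley-Roy bases to one another. Regarding $X$ and $Y$ as disjoint clopen subsets of the topological sum $X\oplus Y$, recall that the open subsets of $X\oplus Y$ are precisely the sets $U\cup V$ with $U$ open in $X$ and $V$ open in $Y$. Define $\Phi:\PR(X)\times\PR(Y)\to\PR(X\oplus Y)$ by $\Phi(A,B)=A\cup B$; since $A\subseteq X$ and $B\subseteq Y$ are disjoint finite sets, $A\cup B$ is a finite subset of $X\oplus Y$, so $\Phi$ is well defined.

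First I would verify that $\Phi$ is a bijection. Because $X$ and $Y$ are disjoint in $X\oplus Y$, any $C=A\cup B$ in the image determines its summands by $A=C\cap X$ and $B=C\cap Y$, which gives injectivity at once; conversely every $C\in\PR(X\oplus Y)$ splits as $C=(C\cap X)\cup(C\cap Y)$, yielding surjectivity. The decomposition into the clopen pieces $X$ and $Y$ is exactly the feature that makes this assignment invertible.

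Next I would match the topologies by chasing basic open sets through $\Phi$ in both directions. For continuity of $\Phi$, take a basic set $[C,U\cup V]$ of $\PR(X\oplus Y)$ and compute its preimage: writing $C_X=C\cap X$ and $C_Y=C\cap Y$, the condition $C\subseteq A\cup B\subseteq U\cup V$ is equivalent to $C_X\subseteq A\subseteq U$ together with $C_Y\subseteq B\subseteq V$, so $\Phi^{-1}([C,U\cup V])=[C_X,U]\times[C_Y,V]$, a basic box in the product. For openness of $\Phi$, take a basic box $[A,U]\times[B,V]$ and observe that its image is $\{A'\cup B':A\subseteq A'\subseteq U,\ B\subseteq B'\subseteq V\}=[A\cup B,\,U\cup V]$, a basic Pixley-Roy open set. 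These two computations together show that $\Phi$ is a homeomorphism.

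The routine part is the two base computations; the point that needs the most care---and the main obstacle to a clean write-up---is the disjoint-sum bookkeeping. One must consistently exploit that $X$ and $Y$ are clopen in $X\oplus Y$, so that every open set splits uniquely as $U\cup V$ and every finite subset splits uniquely as $(C\cap X)\cup(C\cap Y)$, while keeping the nonempty-finite constraints of the two Pixley-Roy factors synchronized across the identification $\Phi^{-1}([C,U\cup V])=[C_X,U]\times[C_Y,V]$. Once this is handled, both inclusions reduce to the displayed set equalities and no further analysis is required.
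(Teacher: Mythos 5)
The paper does not prove this lemma itself; it imports it from \cite[Proposition 4]{WCPSP}, so there is no in-house argument to compare against. Your map $\Phi(A,B)=A\cup B$ together with the two base computations $\Phi^{-1}([C,U\cup V])=[C\cap X,U]\times[C\cap Y,V]$ and $\Phi([A,U]\times[B,V])=[A\cup B,\,U\cup V]$ is exactly the standard route, and both computations are correct as stated.

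There is, however, a genuine gap in the bijectivity step relative to the definition this paper actually uses: here $\PR(Z)$ is the family of \emph{nonempty} finite subsets of $Z$. If $C\in\PR(X\oplus Y)$ satisfies $C\subseteq X$, then $C\cap Y=\emptyset\notin\PR(Y)$, so $C$ is not in the image of $\Phi$; surjectivity fails, and $\Phi$ only identifies $\PR(X)\times\PR(Y)$ with the clopen piece $\{C : C\cap X\neq\emptyset\neq C\cap Y\}$ of $\PR(X\oplus Y)$, the two remaining clopen pieces being copies of $\PR(X)$ and $\PR(Y)$. With the nonempty convention the statement can even fail literally (take $X$ and $Y$ to be singletons: one side is a single point, the other a three-point discrete space), so this is not a defect you can argue away. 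The repair is to work with the convention of the cited source, under which $\emptyset$ is admitted as an (isolated) point of each Pixley--Roy space; then $\Phi$ is a genuine bijection and your two displayed identities finish the proof verbatim. You flagged the ``nonempty-finite constraints'' as the delicate point, but you did not resolve it, and surjectivity as you assert it is false under the paper's own definition; this needs to be addressed explicitly rather than treated as bookkeeping.
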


\begin{Lemma}[\!{\cite[Proposition 5]{WCPSP}}]
\label{L3}
Assume CH. There exist separable metrizable spaces $X$ and $Y$ such that both have the property $\Sf(\Omega,\Omega)$ but $X\oplus Y$ does not have the Menger property.
\end{Lemma}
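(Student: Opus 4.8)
The plan is to realise $X$ and $Y$ as sets of reals, i.e. as subspaces of $\mathbb{N}^{\mathbb{N}}$ (equivalently of the irrationals), so that separability and metrizability come for free, and to govern the covering properties through Hurewicz's combinatorial dictionary. I take as known two facts: a set of reals $Z$ has the Menger property if and only if every continuous image of $Z$ in $\mathbb{N}^{\mathbb{N}}$ fails to be dominating, and $\Sf(\Omega,\Omega)$ is equivalent to the assertion that every finite power of the space is Menger. Accordingly, to secure $\Sf(\Omega,\Omega)$ for $X$ (and symmetrically for $Y$) I must arrange that for every $k$ and every continuous $f\colon X^{k}\to\mathbb{N}^{\mathbb{N}}$ the image $f(X^{k})$ is not dominating, while to break the Menger property of the combination it suffices to exhibit one continuous map on $X\times Y$ whose image dominates $\mathbb{N}^{\mathbb{N}}$.

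Under CH I would run a transfinite recursion of length $\omega_{1}=\mathfrak{c}$. Fix first a $\leq^{*}$-increasing dominating scale $\{h_{\alpha}:\alpha<\omega_{1}\}$ in $\mathbb{N}^{\mathbb{N}}$ and, using CH, a bookkeeping list enumerating in type $\omega_{1}$ all potential witnesses to a dominating image coming from finite powers of $X$ and of $Y$ (each coded by countably much data, so there are only $\mathfrak{c}=\aleph_{1}$ of them). At stage $\alpha$ I introduce a pair $x_{\alpha}\in X$, $y_{\alpha}\in Y$ whose coordinatewise maximum equals $h_{\alpha}$; this forces the image of $X\times Y$ under the $\max$ map to contain the scale $\{h_{\alpha}\}$, hence to be dominating, so $X\times Y$ is not Menger. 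Simultaneously $x_{\alpha},y_{\alpha}$ are chosen to respect the finitely many constraints currently active on the list, so that at the end every continuous image of each finite power $X^{k}$ and $Y^{k}$ is pushed below a fixed function and remains non-dominating; here one uses that the pointwise-$\max$ closure of a set that is $\leq^{*}$-bounded in all finite powers stays bounded, which is what keeps the single factors Menger in every power.

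The main obstacle is exactly this simultaneous diagonalisation: $\Sf(\Omega,\Omega)$ is a demand on all finite powers of each factor at once, and it pulls against the requirement that the two factors jointly manufacture a dominating family through $\max$. The reconciliation rests on the observation that a dominating family is produced here only by mixing a coordinate block from $X$ with one from $Y$ — the $\max$ of $x_{\alpha}$ with $y_{\alpha}$ — whereas inside a single factor one ever only takes $\max$es of boundedly many members already committed to that factor, and such a set can be kept non-dominating in every finite power. Verifying that the finitely many new constraints at each of the $\aleph_{1}$ stages are always satisfiable (there being fewer than $\mathfrak{c}$ commitments so far) is the technical heart of the recursion.

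Finally, $X\times Y$ is homeomorphic to a clopen subspace of the square $(X\oplus Y)^{2}$, and the Menger property is inherited by clopen subspaces; hence $(X\oplus Y)^{2}$ is not Menger, so $X\oplus Y$ does not have the Menger property in all of its finite powers, i.e. $X\oplus Y\notin\Sf(\Omega,\Omega)$. This is precisely the non-Menger behaviour of the sum that drives the Pixley--Roy application via Lemma~\ref{L1}: by the correspondence recorded in Example~\ref{E9}, the failure of the Menger property in a finite power of $X\oplus Y$ prevents $\PR(X\oplus Y)\cong\PR(X)\times\PR(Y)$ from being $\wM$, whereas each factor $\PR(X)$, $\PR(Y)$ is Menger because $X$ and $Y$ satisfy $\Sf(\Omega,\Omega)$.
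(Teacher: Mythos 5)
First, a point about the target itself: the paper contains no proof of Lemma~\ref{L3} --- it is imported verbatim from \cite[Proposition 5]{WCPSP} --- and, as transcribed in the paper, the statement is literally false, so no blind proof of it could succeed. Indeed $\Sf(\Omega,\Omega)$ implies the Menger property (\cite[Theorem 3.9]{coc2}), and the Menger property is preserved by finite (even countable) unions: given a sequence of open covers of $X\oplus Y$, split it into two subsequences and apply the Menger property of $X$ to one and of $Y$ to the other. Hence $X\oplus Y$ is automatically Menger whenever $X$ and $Y$ are. What is true, what \cite{WCPSP} actually asserts, and what Example~\ref{E10} actually needs (via Theorem~\ref{T34} and Lemma~\ref{L1}), is that $X\oplus Y$ fails the Menger property \emph{in some finite power}, i.e.\ fails $\Sf(\Omega,\Omega)$; equivalently, $X\times Y$ is not Menger. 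Your closing paragraph proves exactly this amended assertion, through the clopen copy of $X\times Y$ inside $(X\oplus Y)^2$, and that is the right target --- but you should state explicitly that you are amending the statement rather than silently reinterpreting ``does not have the Menger property'' as ``does not have the Menger property in all finite powers.''

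Second, the genuine gap: the core of your recursion is asserted, not proved, and the heuristics offered in its place would fail. The constraint $\max(x_\alpha,y_\alpha)=h_\alpha$ confines $(x_\alpha,y_\alpha)$ to a compact, hence nowhere dense, subset of the plane, so no genericity or ``dodge finitely many requirements'' argument applies as stated; whether the active requirements can be met \emph{inside} that compact set is precisely the content of the theorem. Moreover, keeping $X$ Menger (let alone all its finite powers) means keeping \emph{every} continuous image of $X$ in $\mathbb{N}^{\mathbb{N}}$ non-dominating, not merely images under pointwise maxima; $\leq^*$-boundedness is not a topological invariant, so your principle that ``the max-closure of a bounded set stays bounded'' gives nothing here. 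A concrete failure mode: any solution of $\max(x_\alpha,y_\alpha)=h_\alpha$ forces $x_\alpha$ to agree with $h_\alpha$ on a set $A_\alpha$ (namely where $y_\alpha<h_\alpha$) and to be small off it if $X$ is to avoid containing a dominating family. If some infinite $S$ satisfies $S\subseteq^* A_\alpha$ for cofinally many $\alpha$, then the coordinate-selection map $x\mapsto x\circ s$ ($s$ the increasing enumeration of $S$) is continuous and sends $X$ onto a dominating set, so $X$ is already non-Menger in its first power. Preventing this imposes delicate combinatorial demands on the supports $A_\alpha$ (and, dually, on their complements for $Y$), and after that one must still control arbitrary continuous maps on all finite powers, defined on $G_\delta$ supersets, with witnesses fixed by bookkeeping. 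This simultaneous satisfiability is exactly the nontrivial content of \cite[Proposition 5]{WCPSP} and of the CH constructions in \cite{coc2} on which it rests; your sketch names it ``the technical heart'' and then passes over it, so the proof as written does not go through.
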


\begin{Th}[\!{\cite[Theorem 2A]{PRS}}]
\label{T34}
If $\PR(X)$ is $\wM$, then every finite power of $X$ is Menger.
\end{Th}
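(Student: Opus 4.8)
The plan is to factor the implication through the selection principle $\Sf(\Omega,\Omega)$ on $X$, using the well-known theorem that a space satisfies $\Sf(\Omega,\Omega)$ if and only if each of its finite powers is Menger (see \cite{coc2}). Thus it suffices to prove that if $\PR(X)$ is $\wM$, then $X$ satisfies $\Sf(\Omega,\Omega)$, and the finite-powers conclusion follows at once. This avoids arguing about each $X^n$ separately and isolates the only place where the Pixley--Roy structure is really needed.

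The bridge is the standard correspondence between $\omega$-covers of $X$ and open covers of $\PR(X)$. For an open set $U\subseteq X$ I would put $U^{+}=\{F\in\PR(X):F\subseteq U\}$ and first record two elementary facts: (i) $U^{+}$ is open in $\PR(X)$, since for every $F\in U^{+}$ the basic set $[F,U]$ is contained in $U^{+}$; and (ii) if $\mathcal{U}$ is an $\omega$-cover of $X$, then $\{U^{+}:U\in\mathcal{U}\}$ is an open cover of $\PR(X)$, because every nonempty finite $F\subseteq X$ lies in some $U\in\mathcal{U}$ and hence $F\in U^{+}$. The essential structural feature to keep in mind is that each $U^{+}$ is downward closed under $\subseteq$.

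With this in hand the core argument runs as follows. Let $(\mathcal{U}_n)$ be a sequence of $\omega$-covers of $X$; for each $n$ form the open cover $\mathcal{U}_n^{+}=\{U^{+}:U\in\mathcal{U}_n\}$ of $\PR(X)$ and apply the $\wM$ property of $\PR(X)$ to $(\mathcal{U}_n^{+})$. Setting $\mathcal{V}_n=\{U\in\mathcal{U}_n:U^{+}\text{ is selected at stage }n\}$, this produces finite subfamilies $\mathcal{V}_n\subseteq\mathcal{U}_n$ such that $G=\bigcup_{n}\bigcup_{U\in\mathcal{V}_n}U^{+}$ is dense in $\PR(X)$. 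I claim $\mathcal{W}=\bigcup_{n}\mathcal{V}_n$ is an $\omega$-cover of $X$, which is exactly the selection demanded by $\Sf(\Omega,\Omega)$. To verify this, fix a nonempty finite $F\subseteq X$ and consider the nonempty basic open set $[F,X]=\{B\in\PR(X):F\subseteq B\}$. By density of $G$ there is some $B\in[F,X]\cap G$; membership in $G$ gives $B\subseteq U$ for some $U\in\mathcal{W}$, while membership in $[F,X]$ gives $F\subseteq B$, whence $F\subseteq U$. As $F$ was arbitrary and no member of $\mathcal{W}$ equals $X$, the family $\mathcal{W}$ is an $\omega$-cover (the routine infiniteness requirement is arranged as usual), so $X$ satisfies $\Sf(\Omega,\Omega)$.

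I expect the main obstacle to be precisely this last density-to-covering step: the $\wM$ property supplies only a selection whose union is \emph{dense} in $\PR(X)$, not a covering one, and a priori density in the hyperspace need not reflect the $\omega$-cover condition downstairs. The resolution exploits the downward closure of each $U^{+}$: testing density against the set $[F,X]$ of all supersets of $F$ yields a witness $B\supseteq F$ sitting inside a \emph{single} selected $U$, which forces $F\subseteq U$. Once this is secured, the only external input is the finite-powers characterization of $\Sf(\Omega,\Omega)$, and the proof is complete.
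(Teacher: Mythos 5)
Your argument is correct. Note that the paper does not prove this statement at all --- it is quoted verbatim as Daniels' Theorem 2A from \cite{PRS} --- and your proof (reducing to $\Sf(\Omega,\Omega)$ via the correspondence $U\mapsto U^{+}$ and testing density of the selection against the basic open set $[F,X]$, then invoking the finite-powers characterization of $\Sf(\Omega,\Omega)$ from \cite{coc2}) is precisely the standard argument behind that cited result, so there is nothing to object to beyond the routine details you already flag as routine.
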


\begin{Th}[\!{\cite[Theorem 2B]{PRS}}]
\label{T33}
If $X$ is a metrizable space such that every finite power of $X$ is Menger, then $\PR(X)^\kappa$ is $\wM$ for each cardinal $\kappa$.
\end{Th}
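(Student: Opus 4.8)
The plan is to establish weak Mengerness of $\PR(X)^\kappa$ by first stripping the hypotheses down to their essential content, then reducing the cardinal $\kappa$ to the countable case, and finally running a finite-power Menger argument in the spirit of Theorem~\ref{T5} and Theorem~\ref{T43}.

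First I would record the standing reductions. Since $X$ is metrizable and Menger it is Lindel\"{o}f, hence second countable; thus $X$ (and each finite power $X^m$) is separable metrizable, regular and cosmic, and every $X^m$ is Menger. Next I would observe that $\PR(X)^\kappa$ satisfies the CCC: by Lemma~\ref{L1} each finite power $\PR(X)^n$ is homeomorphic to $\PR(X\oplus\cdots\oplus X)$, and $X\oplus\cdots\oplus X$ is again regular cosmic, so $\PR(X)^n$ is CCC; since a product is CCC exactly when all of its finite subproducts are, $\PR(X)^\kappa$ is CCC and therefore $\wL$. The point of the CCC is that, in checking the density required for $\wM$, it suffices to meet each member of a fixed countable family of nonempty basic open sets; as each basic set constrains only finitely many coordinates, such a countable family involves only countably many coordinates $C\subseteq\kappa$. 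Projecting to $\PR(X)^{C}\cong\PR(X)^{\omega}$ then reduces the whole problem to the case $\kappa\le\omega$, provided the reduction is carried out compatibly with the finite-per-cover selection and not merely a countable one.

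For the essential case I would argue exactly as in Theorem~\ref{T5} and Theorem~\ref{T43}. Let $(\mathcal U_n)$ be a sequence of open covers of $\PR(X)^{\omega}$, which we may take to consist of basic sets $\prod_{\alpha\in T}[A_\alpha,U_\alpha]$ with $T\subseteq\omega$ finite. Using CCC, fix a countable family $\{\,T_\ell=\prod_{\alpha\in S_\ell}[A_\alpha^{\ell},U_\alpha^{\ell}]\,\}$ of target basic sets whose meeting guarantees density. Partition $\mathbb N$ into infinitely many infinite pieces $N_{m,p}$ indexed by the possible sizes $m$ of the finite lower sets and numbers $p$ of constrained coordinates. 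For $n\in N_{m,p}$ I would convert $\mathcal U_n$ into an open cover of the finite power $X^{mp}$ (each point of $X^{mp}$ encoding $p$ constrained coordinates carrying an $m$-point finite subset of $X$, read off via a fixed countable base of $X$), apply the Menger property of $X^{mp}$ to the resulting subsequence to select finite subfamilies whose union covers $X^{mp}$, and translate these back to finite subfamilies $\mathcal V_n\subseteq\mathcal U_n$. Because the Menger selection covers all relevant points, the union $\bigcup_n\bigcup\mathcal V_n$ meets every target $T_\ell$, hence is dense, witnessing $\wM$.

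The hard part will be the bookkeeping at the interface of the Pixley--Roy topology and the finite powers. A basic set $[A,U]$ carries both a finite lower set $A$ and an open upper set $U$, and ``$T_\ell$ is met by a selected $[B,V]$'' unwinds to the two conditions $A^{\ell}\subseteq V$ and $B\subseteq U^{\ell}$; the first is a genuine neighbourhood-covering condition on the finite set $A^{\ell}$ (this is what lives in $X^{mp}$ and is handled by the Menger property), while the second must be arranged simultaneously. Equally delicate is that the lower sets $A$ range over \emph{all} finite subsets of $X$, an a priori uncountable family, so the whole scheme depends on the CCC reduction to countably many targets being compatible with the coordinate reduction to $\kappa\le\omega$. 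Making these two reductions cohere, so that a single Menger selection on the finite powers discharges all the density requirements of the uncountable product, is the main obstacle; the comparison with Theorem~\ref{T34} (where the converse direction forces all finite powers of $X$ to be Menger) indicates that the finite-power hypothesis is being used in full strength and cannot be weakened to mere separability.
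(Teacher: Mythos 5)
The paper does not actually prove this statement: it is quoted verbatim from Daniels \cite[Theorem 2B]{PRS}, so your attempt has to be judged on its own merits rather than against an in-paper argument. Judged that way, the central reduction is broken. You claim that, because $\PR(X)^\kappa$ is CCC, ``it suffices to meet each member of a fixed countable family of nonempty basic open sets'' in order to certify density, and you then use this to cut down to countably many coordinates and to the case $\kappa\le\omega$. CCC does not give such a family; what you are really asserting is that $\PR(X)^\kappa$ has countable $\pi$-weight, and that is false whenever $X$ is uncountable. Indeed, if a countable family $\{T_\ell\}$ of nonempty open sets had the property that any set meeting every $T_\ell$ is dense, then choosing one point from each $T_\ell$ would produce a countable dense set, so the space would be separable; but every dense subset of $\PR(X)$ must, for each $x\in X$, contain a finite set with $x$ as an element (consider $[\{x\},U]$), so $\PR(X)$ has density at least $|X|$. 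The theorem is supposed to apply, for instance, to $X=\mathbb{R}$ (all finite powers are $\sigma$-compact, hence Menger), where $\PR(\mathbb{R})$ is CCC but has density $\mathfrak{c}$. So the reduction to countably many targets, and with it the reduction of $\kappa$ to $\omega$, collapses already for $\kappa=1$.

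A second, acknowledged but unresolved, gap sits at exactly the point you call ``the main obstacle.'' Since $[A,U]\cap[B,W]\neq\emptyset$ if and only if $A\subseteq W$ \emph{and} $B\subseteq U$, the selected basic sets must simultaneously have small lower parts (landing inside the arbitrary upper set $W$ of the target) and large upper parts (swallowing the arbitrary lower set $B$ of the target); your plan handles only the second half via covers of $X^{mp}$ and defers the first. Daniels' actual argument does not pass through any countable family of targets: it encodes both the lower and the upper data of the basic sets into $\omega$-covers (equivalently, open covers of the finite powers) of $X$, uses that all finite powers of $X$ being Menger yields $\Sf(\Omega,\Omega)$, and verifies density against \emph{all} targets and all finite sets of coordinates of $\kappa$ at once. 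Without that mechanism, and with the false $\pi$-base step removed, the proposal does not yield the theorem even for $\kappa=1$.
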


\begin{Ex}
\label{E10}
Assume CH. There are $\wS$ spaces $X$ and $Y$ such that $X\times Y$ is not $\wM$.\\
By Lemma~\ref{L3}, there are two separable metrizable spaces $X$ and $Y$ such that both $X$ and $Y$ satisfy $\Sf(\Omega,\Omega)$ but $X\oplus Y$ does not satisfy the Menger property. Now every finite power of $X$ and also of $Y$ is Menger (see \cite[Theorem 3.9]{coc2}). By Theorem~\ref{T33}, every finite power of $\PR(X)$ and also of $\PR(Y)$ is $\wM$. It follows that both $\PR(X)$ and $\PR(Y)$ are $\wS$ (see Theorem~\ref{T5}). Also by Theorem~\ref{T34}, $\PR(X\oplus Y)$ is not $\wM$ and $\PR(X)\times\PR(Y)$ is not so (see Lemma~\ref{L1}).
\end{Ex}

\begin{figure}[h]
\begin{adjustbox}{max width=\textwidth,max height=\textheight,keepaspectratio,center}
\begin{tikzcd}[column sep=2ex,row sep=5ex,arrows={crossing over}]
&&&\wHk\arrow[rd]&
\\
\text{\sf H}\arrow[rr]\arrow[rrru]&&
\aH\arrow[rr]&&
\wH
\\
&&&\S1(\mathcal{G}_K,\mathcal{G}_{\Gamma_D})\arrow[rd]\arrow[uu]&
\\
\S1(\mathcal{G}_K,\mathcal{G}_\Gamma)\arrow[rr]\arrow[uu]\arrow[rrru]\arrow[d]&&
\S1(\mathcal{G}_K,\mathcal{G}_{\overline{\Gamma}})\arrow[rr]\arrow[uu]\arrow[d]&&
\S1(\mathcal{G}_K,\mathcal{G}_{D_\Gamma})\arrow[uu]\arrow[d]
\\
\S1(\mathcal{G}_K,\mathcal{G})\arrow[rr]\arrow[d]&&
\S1(\mathcal{G}_K,\overline{\mathcal{G}})\arrow[rr]\arrow[d]&&
\S1(\mathcal{G}_K,\mathcal{G}_D)\arrow[d]
\\
\text{\sf S}\arrow[rr]&&
\aS\arrow[rr]&&
\wS
\end{tikzcd}
\end{adjustbox}
\caption{Weaker forms of the Alster, Hurewicz and Scheepers properties}
\label{dig2}
\end{figure}
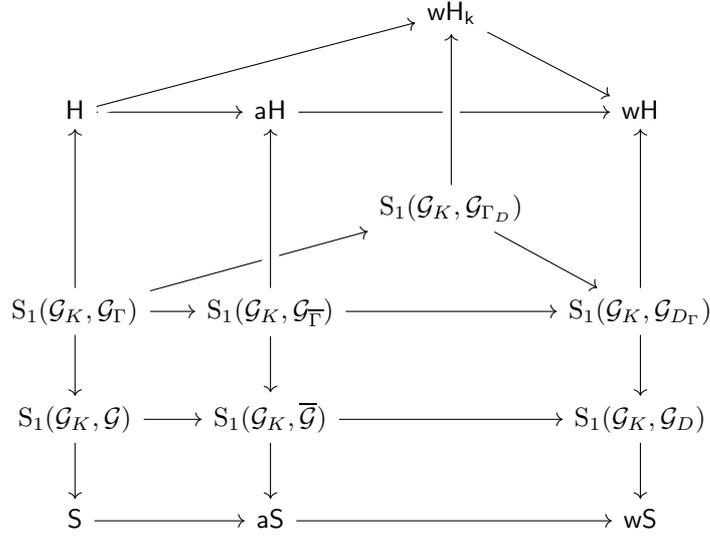

\subsection{Preservation under certain mappings}
We begin this section by recalling the following definitions of certain mappings which will come to our use. Let $X$ and $Y$ be two spaces. A mapping $f:X\to Y$ is called
\begin{enumerate}[wide=0pt,leftmargin=*]
  \item $\theta$-continuous \cite{ETS1} (respectively, strongly $\theta$-continuous \cite{STF}) if for each $x\in X$, and each open set $V$ in $Y$ containing $f(x)$ there is an open set $U$ in $X$ containing $x$ such that $f(\overline{U})\subseteq\overline{V}$ (respectively, $f(\overline{U})\subseteq V$).
  \item contra-continuous \cite{CCF} if for each open set $V$ in $Y$, $f^{-1}(V)$ is closed in $X$.
  \item precontinuous \cite{PWPM} if $f^{-1}(V)\subseteq\Int(\overline{f^{-1}(V)})$ whenever $V$ is open in $Y$.
  \item almost continuous \cite{ACM} if for each regular-open set $V$ in $Y$, $f^{-1}(V)$ is open in $X$.
  \item weakly continuous \cite{DCTS} if for each $x\in X$ and each open set $V$ of $Y$ containing $f(x)$ there is an open set $U$ of $X$ containing $x$ such that $f(U)\subseteq\overline{V}$.
  \item $\theta$-open \cite{Wilansky} if for each open set $V$ of $Y$, $f^{-1}(\overline{V})\subseteq\overline{f^{-1}(V)}$.
  \item almost open \cite{Wilansky} if for each $x\in X$ and each open set $U$ of $X$ containing $x$, $\overline{f(U)}$ is a neighbourhood of $f(x)$.
\end{enumerate}

It is immediate that every open map is $\theta$-open and every $\theta$-open map is almost open. Also one can verify the following implication diagram (Figure~\ref{dig3}).
\begin{figure}[h]
\begin{adjustbox}{max width=\textwidth,max height=\textheight,keepaspectratio,center}
\begin{tikzcd}[column sep=1.5ex,row sep=4ex,arrows={crossing over}]
\text{\small almost continuous}\arrow[r] & \text{\small $\theta$-continuous}\arrow[r] & \text{\small weakly continuous}
\\
\text{\small continuous}\arrow[r]\arrow[u] & \text{\small strongly $\theta$-continuous}\arrow[u] &
\\
\text{\small contra-continuous} + \text{\small precontinuous}\arrow[u] &&
\end{tikzcd}
\end{adjustbox}
\caption{Weaker forms of continuous mapping}
\label{dig3}
\end{figure}
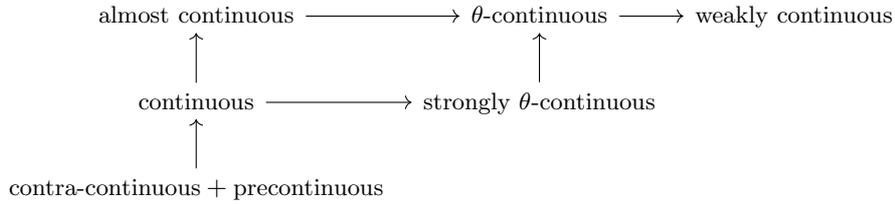

\begin{Th}
\label{T7}
If $f:X\to Y$ is a $\theta$-continuous mapping from a $\aS$ space $X$ onto a space $Y$, then $Y$ is also $\aS$.
\end{Th}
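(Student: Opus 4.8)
The plan is to pull each open cover of $Y$ back to an open cover of $X$ via $\theta$-continuity, apply the $\aS$ property of $X$, and then push the selected finite subfamilies forward along $f$. First I would fix a sequence $(\mathcal{W}_n)$ of open covers of $Y$. For each $n$ and each $x\in X$, since $\mathcal{W}_n$ covers $Y$ I can choose $W_x^{(n)}\in\mathcal{W}_n$ with $f(x)\in W_x^{(n)}$; then $\theta$-continuity supplies an open set $U_x^{(n)}\ni x$ in $X$ with $f(\overline{U_x^{(n)}})\subseteq\overline{W_x^{(n)}}$. For each $n$ the family $\mathcal{U}_n=\{U_x^{(n)} : x\in X\}$ is an open cover of $X$.

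Applying the $\aS$ property of $X$ to $(\mathcal{U}_n)$, in the equivalent ``infinitely many $n$'' formulation, yields finite sets $F_n\subseteq X$ so that, writing $\mathcal{V}_n=\{U_x^{(n)} : x\in F_n\}$, every finite $F\subseteq X$ satisfies $F\subseteq\overline{\cup\mathcal{V}_n}$ for infinitely many $n$. I would then set $\mathcal{G}_n=\{W_x^{(n)} : x\in F_n\}$, a finite subset of $\mathcal{W}_n$, and claim that $(\mathcal{G}_n)$ witnesses $\aS$ for $Y$. The crux is the inclusion $f(\overline{\cup\mathcal{V}_n})\subseteq\overline{\cup\mathcal{G}_n}$: since $F_n$ is finite, closure commutes with the union, so $\overline{\cup\mathcal{V}_n}=\cup_{x\in F_n}\overline{U_x^{(n)}}$, whence $f(\overline{\cup\mathcal{V}_n})=\cup_{x\in F_n}f(\overline{U_x^{(n)}})\subseteq\cup_{x\in F_n}\overline{W_x^{(n)}}=\overline{\cup\mathcal{G}_n}$, the last equality again using that a finite union of closures is the closure of the union.

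To verify the witness I would take an arbitrary finite $K\subseteq Y$ and, using surjectivity of $f$, pick one preimage for each point of $K$ to form a finite set $F\subseteq X$ with $f(F)=K$. For the infinitely many $n$ with $F\subseteq\overline{\cup\mathcal{V}_n}$ we then obtain $K=f(F)\subseteq f(\overline{\cup\mathcal{V}_n})\subseteq\overline{\cup\mathcal{G}_n}$, which is exactly the requirement that $\{\cup\mathcal{G}_n : n\in\mathbb{N}\}\in\overline{\Omega}$ for $Y$. The step I expect to be the main obstacle is the interaction between $\theta$-continuity and closures: $\theta$-continuity controls $f(\overline{U})$ rather than $\overline{f(U)}$, so the estimate must be routed through images of closures, and it depends essentially on the selected subfamilies being \emph{finite}, which is what lets the closure distribute over the union; surjectivity of $f$ is used only at the final step to realize $K$ as $f(F)$.
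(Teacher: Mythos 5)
Your proposal is correct and follows essentially the same route as the paper's proof: pull the covers of $Y$ back via $\theta$-continuity, apply the $\aS$ property of $X$, push the finite selections forward, and use surjectivity to lift a finite subset of $Y$ to $X$. The only difference is that you spell out the closure-commutes-with-finite-unions step that the paper leaves implicit.
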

\begin{proof}
Let $(\mathcal{U}_n)$ be a sequence of open covers of $Y$. Let $x\in X$. For each $n$ choose a $V_x^{(n)}\in\mathcal{U}_n$ containing $f(x)$. Since $f$ is $\theta$-continuous, there is an open set $U_x^{(n)}$ in $X$ containing $x$ such that $f\left(\overline{U_x^{(n)}}\right)
\subseteq\overline{V_x^{(n)}}$. For each $n$ consider the open cover $\mathcal{W}_n=\{U_x^{(n)} : x\in X\}$ of $X$. Using the $\aS$ property of $X$ we choose a sequence $(\mathcal{H}_n)$ such that for each $n$ $\mathcal{H}_n=\{U_{x_i}^{(n)} : 1\leq i\leq k_n\}$ is a finite subset of $\mathcal{W}_n$ and for each finite set $F\subseteq X$ there is a $n$ such that $F\subseteq\overline{\cup\mathcal{H}_n}$. For each $n$ we consider the finite subset $\mathcal{V}_n=\{V_{x_i}^{(n)} : 1\leq i\leq k_n\}$ of $\mathcal{U}_n$. Let $F$ be a finite subset of $Y$. Next we pick a finite set $F^\prime\subseteq X$ such that $f(F^\prime)=F$. Now choose a $n_0\in\mathbb{N}$ such that $F^\prime\subseteq\overline{\cup\mathcal{H}_{n_0}}$. Since $f\left(\overline{U_{x_i}^{(n_0)}}\right)
\subseteq\overline{V_{x_i}^{(n_0)}}$ for each $1\leq i\leq k_{n_0}$, we have $F\subseteq\overline{\cup\mathcal{V}_{n_0}}$. Consequently $Y$ is $\aS$.
\end{proof}

\begin{Cor}
\label{C4}
The almost continuous image of a $\aS$ space is $\aS$.
\end{Cor}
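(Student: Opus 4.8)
The plan is to reduce the statement to Theorem~\ref{T7}. An almost continuous map is in particular $\theta$-continuous, as recorded in Figure~\ref{dig3}; once this is in hand the corollary is immediate, for if $f\colon X\to Y$ is an almost continuous surjection and $X$ is $\aS$, then $f$ is a $\theta$-continuous surjection from a $\aS$ space, whence $Y$ is $\aS$ by Theorem~\ref{T7}. So the only thing that really needs justifying is the implication ``almost continuous $\Rightarrow$ $\theta$-continuous'', and I would include a short verification for completeness rather than lean solely on the diagram.

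First I would fix $x\in X$ and an open set $V\subseteq Y$ with $f(x)\in V$. Since $V$ is open, $V\subseteq\Int(\overline{V})$, so the regular-open set $\Int(\overline{V})$ contains $f(x)$; by almost continuity $U:=f^{-1}(\Int(\overline{V}))$ is an open set in $X$ containing $x$. I would then aim to show $f(\overline{U})\subseteq\overline{V}$, which exhibits $U$ as the neighbourhood witnessing $\theta$-continuity at $x$, since $f(\overline{U})\subseteq\overline{V}$ is exactly the required condition $f(\overline{U})\subseteq\overline{V}$ from the definition in Figure~\ref{dig3}.

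The step needing a little care — and which I expect to be the main (though mild) obstacle — is precisely this inclusion $f(\overline{U})\subseteq\overline{V}$, i.e. controlling $f$ on the \emph{closure} of $U$ and not merely on $U$ (on $U$ one only gets $f(U)\subseteq\Int(\overline{V})\subseteq\overline{V}$ for free). I would argue by contradiction: if some $x'\in\overline{U}$ had $f(x')\notin\overline{V}$, then $W:=Y\setminus\overline{V}$ is an open neighbourhood of $f(x')$ with $W\cap\overline{V}=\emptyset$, so $\Int(\overline{W})$ is regular-open and contains $f(x')$. By almost continuity $f^{-1}(\Int(\overline{W}))$ is an open neighbourhood of $x'$, and since $x'\in\overline{U}$ it meets $U$, producing a point $z$ with $f(z)\in\Int(\overline{W})\cap\Int(\overline{V})$. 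But $W\cap\overline{V}=\emptyset$ gives $\overline{V}\subseteq Y\setminus W$, hence $\Int(\overline{V})\subseteq Y\setminus\overline{W}$, so $\Int(\overline{W})\cap\Int(\overline{V})=\emptyset$ — a contradiction. This completes the verification, and the corollary then follows from Theorem~\ref{T7} exactly as in the first paragraph.
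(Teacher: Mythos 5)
Your proposal is correct and follows exactly the paper's intended route: the corollary is derived from Theorem~\ref{T7} via the implication ``almost continuous $\Rightarrow$ $\theta$-continuous'' recorded in Figure~\ref{dig3}, which the paper leaves unverified and which your argument with the regular-open sets $\Int(\overline{V})$ and $\Int(\overline{W})$ establishes correctly.
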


Similarly we obtain the following.
\begin{Th}
\label{T10}
If $f:X\to Y$ is a weakly continuous mapping from a Scheepers space $X$ onto a space $Y$, then $Y$ is $\aS$.
\end{Th}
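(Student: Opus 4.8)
The plan is to mirror the argument of Theorem~\ref{T7}, but with two coordinated changes: weaken $\theta$-continuity to weak continuity on the map, and strengthen the hypothesis on the domain from $\aS$ to the full Scheepers property. These two substitutions balance each other exactly, so the same bookkeeping goes through.

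First I would fix a sequence $(\mathcal{U}_n)$ of open covers of $Y$. For each $x\in X$ and each $n$, since $\mathcal{U}_n$ covers $Y$ I can select $V_x^{(n)}\in\mathcal{U}_n$ with $f(x)\in V_x^{(n)}$. Weak continuity of $f$ then supplies an open set $U_x^{(n)}$ in $X$ with $x\in U_x^{(n)}$ and $f(U_x^{(n)})\subseteq\overline{V_x^{(n)}}$. For each $n$ the family $\mathcal{W}_n=\{U_x^{(n)} : x\in X\}$ is an open cover of $X$. The crucial point is that weak continuity only yields $f(U)\subseteq\overline{V}$ (with no closure on $U$), which is weaker than the $f(\overline{U})\subseteq\overline{V}$ used in Theorem~\ref{T7}; this is precisely why I must demand the Scheepers property on $X$ rather than merely $\aS$.

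Next I would apply the Scheepers property $\Uf(\mathcal{O},\Omega)$ of $X$ to $(\mathcal{W}_n)$, obtaining for each $n$ a finite set $\mathcal{H}_n=\{U_{x_i}^{(n)} : 1\leq i\leq k_n\}\subseteq\mathcal{W}_n$ such that $\{\cup\mathcal{H}_n : n\in\mathbb{N}\}$ is an $\omega$-cover of $X$; that is, every finite $F^\prime\subseteq X$ satisfies $F^\prime\subseteq\cup\mathcal{H}_{n_0}$ for some $n_0$. I then set $\mathcal{V}_n=\{V_{x_i}^{(n)} : 1\leq i\leq k_n\}$, which is a finite subset of $\mathcal{U}_n$. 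To verify that $(\mathcal{V}_n)$ witnesses $\aS$ for $Y$, let $F\subseteq Y$ be finite. Using that $f$ is onto, pick a finite $F^\prime\subseteq X$ with $f(F^\prime)=F$. Because $\{\cup\mathcal{H}_n\}$ is an $\omega$-cover, there is an $n_0$ with $F^\prime\subseteq\cup\mathcal{H}_{n_0}$, so each $x^\prime\in F^\prime$ lies in some $U_{x_i}^{(n_0)}$, whence $f(x^\prime)\in f(U_{x_i}^{(n_0)})\subseteq\overline{V_{x_i}^{(n_0)}}\subseteq\overline{\cup\mathcal{V}_{n_0}}$. Therefore $F=f(F^\prime)\subseteq\overline{\cup\mathcal{V}_{n_0}}$, which is exactly $\{\cup\mathcal{V}_n : n\in\mathbb{N}\}\in\overline{\Omega}$.

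There is no deep obstacle here; the proof is essentially a routine transfer. The two subtle points worth flagging are: surjectivity of $f$ is genuinely needed, since it is what lets me realize an arbitrary finite $F\subseteq Y$ as $f(F^\prime)$ for a finite $F^\prime\subseteq X$; and the reason weak continuity is enough is that the Scheepers property delivers a true $\omega$-cover on the domain (with $F^\prime\subseteq\cup\mathcal{H}_{n_0}$ and no closure on the $X$-side), so the single closure hidden in $f(U)\subseteq\overline{V}$ is exactly the closure that reappears in $\overline{\cup\mathcal{V}_{n_0}}$ on the target.
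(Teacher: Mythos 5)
Your proof is correct and is exactly the argument the paper intends: the paper gives no separate proof of Theorem~\ref{T10}, merely stating that it follows ``similarly'' to Theorem~\ref{T7}, and your two coordinated substitutions (weak continuity in place of $\theta$-continuity, compensated by upgrading the domain hypothesis from $\aS$ to Scheepers so that $F^\prime\subseteq\cup\mathcal{H}_{n_0}$ holds without a closure) are precisely the necessary modifications. The bookkeeping with $\mathcal{V}_n=\{V_{x_i}^{(n)} : 1\leq i\leq k_n\}$ and the final inclusion $F\subseteq\overline{\cup\mathcal{V}_{n_0}}$ matches the template of the paper's proof of Theorem~\ref{T7}.
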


\begin{Th}
\label{T15}
Let $f:X\to Y$ be a surjective strongly $\theta$-continuous mapping.
\begin{enumerate}[wide=0pt,label={\upshape(\arabic*)}]
\item If $X$ is $\aS$, then $Y$ is Scheepers.
\item If $X$ is $\wS$, then $Y$ is also $\wS$.
\item If $X$ is $\wSk$, then $Y$ is also $\wSk$.
\end{enumerate}
\end{Th}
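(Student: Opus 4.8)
\emph{Proof sketch.} The plan is to run, for all three parts, the same construction used in the proof of Theorem~\ref{T7}, and then to exploit the stronger inclusion $f(\overline{U})\subseteq V$ granted by strong $\theta$-continuity (as opposed to only $f(\overline{U})\subseteq\overline{V}$) in the verification step. Starting from a sequence $(\mathcal{U}_n)$ of open covers of $Y$, for each $x\in X$ and each $n$ I would select $V_x^{(n)}\in\mathcal{U}_n$ with $f(x)\in V_x^{(n)}$ and, using strong $\theta$-continuity, an open set $U_x^{(n)}\ni x$ with $f(\overline{U_x^{(n)}})\subseteq V_x^{(n)}$. The families $\mathcal{W}_n=\{U_x^{(n)} : x\in X\}$ are open covers of $X$, to which I apply the relevant selection property of $X$; transcribing the chosen finite subfamilies $\mathcal{H}_n=\{U_{x_i}^{(n)} : 1\le i\le k_n\}$ back to $\mathcal{V}_n=\{V_{x_i}^{(n)} : 1\le i\le k_n\}$ produces finite subsets of $\mathcal{U}_n$.

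For $(1)$ I would use the $\aS$ property of $X$ to arrange that every finite $F'\subseteq X$ lies in $\overline{\cup\mathcal{H}_n}$ for some $n$. Given a finite $F\subseteq Y$, surjectivity provides a finite $F'\subseteq X$ with $f(F')=F$; picking $n_0$ with $F'\subseteq\overline{\cup\mathcal{H}_{n_0}}=\bigcup_i\overline{U_{x_i}^{(n_0)}}$ (closure commutes with finite unions) and applying $f(\overline{U_{x_i}^{(n_0)}})\subseteq V_{x_i}^{(n_0)}$ yields $F=f(F')\subseteq\cup\mathcal{V}_{n_0}$. Since the closure on the target side has disappeared, $\{\cup\mathcal{V}_n : n\in\mathbb{N}\}$ is an $\omega$-cover of $Y$, i.e. $Y$ is Scheepers. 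This upgrade from $\overline{\Omega}$ to $\Omega$ is precisely where strong $\theta$-continuity, rather than mere $\theta$-continuity, is essential.

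For $(2)$ and $(3)$ the verification must be recast in terms of open sets and density, and I expect the main obstacle to be manufacturing open sets in the domain from open sets in the range without assuming that preimages are open. I would circumvent this directly: given a nonempty open $O\subseteq Y$, choose $y\in O$, a preimage $x\in f^{-1}(y)$ by surjectivity, and then an open $G\ni x$ with $f(\overline{G})\subseteq O$ by strong $\theta$-continuity, so that $G$ is a nonempty open subset of $X$ with $f(G)\subseteq O$. For $(2)$, applying the $\wS$ property of $X$ to the sets $\{G_1,\dots,G_p\}$ obtained this way from a finite family $\{O_1,\dots,O_p\}$ of nonempty open subsets of $Y$ gives infinitely many $n$ with $G_j\cap(\cup\mathcal{H}_n)\neq\emptyset$ for all $j$; since $f(G_j)\subseteq O_j$ and $f(\cup\mathcal{H}_n)\subseteq\cup\mathcal{V}_n$, each such $n$ satisfies $O_j\cap(\cup\mathcal{V}_n)\neq\emptyset$ for all $j$, so $Y$ is $\wS$. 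For $(3)$, if $D$ is the dense subset of $X$ furnished by the $\wSk$ property, the same device shows $f(D)$ is dense in $Y$ (every nonempty open $O$ meets $f(D)$ at $f(d)$ for some $d\in G\cap D$); then for a finite $F\subseteq f(D)$ I would lift it to a finite $F'\subseteq D$ by surjectivity, use that $F'\subseteq\cup\mathcal{H}_n$ for infinitely many $n$, and push forward via $f(U_{x_i}^{(n)})\subseteq V_{x_i}^{(n)}$ to obtain $F\subseteq\cup\mathcal{V}_n$ for infinitely many $n$, witnessing $\wSk$ with the dense set $f(D)$.
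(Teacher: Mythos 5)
Your proposal is correct and follows essentially the same route as the paper: pull the covers of $Y$ back to covers $\mathcal{W}_n$ of $X$ via strong $\theta$-continuity, apply the relevant selection property of $X$, and push the chosen finite families forward using $f(\overline{U_x^{(n)}})\subseteq V_x^{(n)}$, which is exactly how the closure is shed in part (1) and how density of the image of the dense set is obtained in part (3). The paper only sketches part (3) and asserts density of $f(Z)$ without comment; your explicit verification of that point and your write-ups of (1) and (2) are the natural completions of the same argument.
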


\begin{proof}
We only sketch the proof of (3). Let $(\mathcal{U}_n)$ be a sequence of open covers of $Y$. Let $x\in X$. For each $n$ choose a $V_x^{(n)}\in\mathcal{U}_n$ containing $f(x)$. Subsequently for each $n$ we obtain an open set $U_x^{(n)}$ in $X$ containing $x$ such that $f\left(\overline{U_x^{(n)}}\right)\subseteq V_x^{(n)}$. We now apply the $\wSk$ property of $X$ to the sequence $(\mathcal{W}_n)$, where for each $n$ $\mathcal{W}_n=\{U_x^{(n)} : x\in X\}$. Choose a dense set $Z\subseteq X$ and a sequence $(\mathcal{H}_n)$ such that for each $n$ $\mathcal{H}_n=\{U_{x_i}^{(n)} : 1\leq i\leq k_n\}$ is a finite subset of $\mathcal{W}_n$ and for each finite set $F\subseteq Z$ there is a $n$ such that $F\subseteq\cup\mathcal{H}_n$. Clearly $f(Z)$ is dense in $Y$. Let $\mathcal{V}_n=\{V_{x_i}^{(n)} : 1\leq i\leq k_n\}$ for each $n$. It is now easy to verify that $f(Z)$ and $(\mathcal{V}_n)$ witness that $Y$ is $\wSk$.
\end{proof}

\begin{Cor}
\label{C12}
Continuous image of a $\aS$ (respectively, $\wS$, $\wSk$) space is Scheepers (respectively, $\wS$, $\wSk$). The same is also true for contra-continuous precontinuous mappings.
\end{Cor}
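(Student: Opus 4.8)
The plan is to derive every assertion directly from Theorem~\ref{T15}, using only the implications recorded in the diagram of Figure~\ref{dig3}. First I would reduce to a surjection: given a continuous map $f:X\to Y$, the phrase ``continuous image'' refers to $f(X)$, so I replace $Y$ by $f(X)$ and regard $f:X\to f(X)$ as a continuous surjection. By Figure~\ref{dig3}, every continuous map is in particular strongly $\theta$-continuous, so $f$ is a surjective strongly $\theta$-continuous mapping and Theorem~\ref{T15} applies verbatim. Part~(1) then gives that $f(X)$ is Scheepers whenever $X$ is $\aS$, part~(2) gives that $f(X)$ is $\wS$ whenever $X$ is $\wS$, and part~(3) gives that $f(X)$ is $\wSk$ whenever $X$ is $\wSk$. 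This disposes of the three ``respectively'' clauses of the first sentence.

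For the second sentence I would appeal to the bottom implication of Figure~\ref{dig3}: a mapping that is simultaneously contra-continuous and precontinuous is continuous. Hence a surjective contra-continuous precontinuous map is a continuous surjection, and the conclusion follows immediately from the case already treated, with no further argument required.

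There is essentially no obstacle here, since all of the work has been absorbed into Theorem~\ref{T15} and the two implications ``continuous $\Rightarrow$ strongly $\theta$-continuous'' and ``contra-continuous $+$ precontinuous $\Rightarrow$ continuous'' already summarised in Figure~\ref{dig3}. The only points that deserve a word of care are the harmless reduction to a surjection (so that Theorem~\ref{T15}, stated for surjective maps, becomes applicable) and the observation that continuity \emph{upgrades} the $\aS$ conclusion to the full Scheepers property, which is exactly the extra strength built into Theorem~\ref{T15}(1) for strongly $\theta$-continuous maps.
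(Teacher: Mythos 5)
Your proposal is correct and is exactly the argument the paper intends: the corollary is derived from Theorem~\ref{T15} via the implications ``continuous $\Rightarrow$ strongly $\theta$-continuous'' and ``contra-continuous $+$ precontinuous $\Rightarrow$ continuous'' recorded in Figure~\ref{dig3}, including the upgrade from $\aS$ to the full Scheepers property in the image. No gaps.
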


\begin{Th}
\label{T11}
Let $f:X\to Y$ be a surjective, $\theta$-open and closed mapping such that $f^{-1}(y)$ is compact in $X$ for each $y\in Y$. If $Y$ is $\aS$, then $X$ is also $\aS$.
\end{Th}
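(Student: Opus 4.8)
The plan is to transfer the problem from $X$ down to $Y$ by a tube-lemma argument that exploits the compactness of the fibres together with the closedness of $f$, then apply the $\aS$ property of $Y$, and finally pull the resulting finite selections back to $X$, using $\theta$-openness to recover the required closure condition upstairs.

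First I would fix a sequence $(\mathcal{U}_n)$ of open covers of $X$ and build, for each $n$, an associated open cover of $Y$. For each $n$ and each $y\in Y$, the fibre $f^{-1}(y)$ is compact, so it is covered by a finite subfamily $\mathcal{V}_{n,y}\subseteq\mathcal{U}_n$; write $W_{n,y}=\cup\mathcal{V}_{n,y}$, an open set in $X$ with $f^{-1}(y)\subseteq W_{n,y}$. Since $f$ is closed, $f(X\setminus W_{n,y})$ is closed in $Y$ and, because $f^{-1}(y)\subseteq W_{n,y}$, it misses $y$. Hence $V_{n,y}=Y\setminus f(X\setminus W_{n,y})$ is an open neighbourhood of $y$, and a direct check shows $f^{-1}(V_{n,y})\subseteq W_{n,y}$ (if $f(x)\in V_{n,y}$ then $f(x)\notin f(X\setminus W_{n,y})$, forcing $x\in W_{n,y}$). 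Thus $\mathcal{O}_n=\{V_{n,y} : y\in Y\}$ is an open cover of $Y$ for each $n$.

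Next I would apply the $\aS$ property of $Y$ to the sequence $(\mathcal{O}_n)$ to obtain finite sets $F_n\subseteq Y$ such that the subfamilies $\mathcal{H}_n=\{V_{n,y} : y\in F_n\}$ satisfy: every finite $G\subseteq Y$ is contained in $\overline{\cup\mathcal{H}_n}$ for some $n$. Setting $\mathcal{V}_n=\cup_{y\in F_n}\mathcal{V}_{n,y}$, which is a finite subset of $\mathcal{U}_n$, and $V_n^{*}=\cup\mathcal{H}_n$, the inclusions $f^{-1}(V_{n,y})\subseteq W_{n,y}$ assemble to give $f^{-1}(V_n^{*})\subseteq\cup\mathcal{V}_n$.

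To finish, given a finite $F\subseteq X$ I put $G=f(F)$ and choose $n$ with $G\subseteq\overline{V_n^{*}}$. Then $F\subseteq f^{-1}(f(F))\subseteq f^{-1}(\overline{V_n^{*}})$, and this is where $\theta$-openness is essential: by definition $f^{-1}(\overline{V_n^{*}})\subseteq\overline{f^{-1}(V_n^{*})}\subseteq\overline{\cup\mathcal{V}_n}$, so $F\subseteq\overline{\cup\mathcal{V}_n}$ and $(\mathcal{V}_n)$ witnesses that $X$ is $\aS$. The hard part is precisely this last step: one must arrange the selections in $Y$ so that their closures, after being pulled back, still absorb arbitrary finite subsets of $X$, and it is the condition $f^{-1}(\overline V)\subseteq\overline{f^{-1}(V)}$ that converts a ``closure cover'' downstairs into one upstairs. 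Notably, continuity of $f$ is never used; closedness plus compact fibres drive the tube argument, while $\theta$-openness alone handles the closure transfer.
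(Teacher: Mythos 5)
Your proof is correct and follows essentially the same route as the paper's: a tube-lemma step using compact fibres and closedness of $f$ to push the covers down to $Y$, an application of the $\aS$ property of $Y$, and a pull-back of the selections using $\theta$-openness. If anything, your final step is slightly cleaner, since you apply the $\theta$-openness condition $f^{-1}(\overline{V})\subseteq\overline{f^{-1}(V)}$ directly to the open union $\cup\mathcal{H}_n$, whereas the paper first distributes the closure over the finite union termwise.
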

\begin{proof}
Let $(\mathcal{U}_n)$ be a sequence of open covers of $X$ and $y\in Y$. Since $f^{-1}(y)$ is compact, for each $n$ there is a finite subset $\mathcal{V}_n^y$ of $\mathcal{U}_n$ such that $f^{-1}(y)\subseteq\cup\mathcal{V}_n^y$. Also there is an open set $U_y^{(n)}$ in $Y$ containing $y$ such that $f^{-1}(U_y^{(n)})\subseteq\cup\mathcal{V}_n^y$. For each $n$ $\mathcal{W}_n=\{U_y^{(n)} : y\in Y\}$ is an open cover of $Y$. Apply the $\aS$ property of $Y$ to $(\mathcal{W}_n)$ to obtain a sequence $(\mathcal{H}_n)$ such that for each $n$ $\mathcal{H}_n=\{U_{y_i}^{(n)} : 1\leq i\leq k_n\}$ is a finite subset of $\mathcal{W}_n$ and for each finite set $F\subseteq Y$ there is a $n$ such that $F\subseteq\overline{\cup\mathcal{H}_n}$. For each $n$ choose $\mathcal{V}_n=\cup_{1\leq i\leq k_n}\mathcal{V}_n^{y_i}$. Let $F$ be a finite subset of $X$. Then there is a $n_0\in\mathbb{N}$ such that $f(F)\subseteq\overline{\cup\mathcal{H}_{n_0}}$ i.e. $F\subseteq\cup_{1\leq i\leq k_{n_0}}f^{-1}\left(\overline{U_{y_i}^{(n_0)}}\right)$. Since $f$ is $\theta$-open, it follows that $F\subseteq\cup_{1\leq i\leq k_{n_0}}\overline{f^{-1}(U_{y_i}^{(n_0)})}$ and hence $F\subseteq\cup_{1\leq i\leq k_{n_0}}\overline{\cup\mathcal{V}_{n_0}^{y_i}}=
\overline{\cup\mathcal{V}_{n_0}}$. Thus $X$ is $\aS$.
\end{proof}

\begin{Cor}
\label{C8}
\hfill
\begin{enumerate}[wide=0pt,label={\upshape(\arabic*)},
leftmargin=*]
  \item If $f:X\to Y$ is a $\theta$-open (respectively, open), perfect mapping from a space $X$ onto a $\aS$ space $Y$, then $X$ is also $\aS$.
  \item  Let $f:X\to Y$ be a surjective, open and closed mapping such that $f^{-1}(y)$ is compact in $X$ for each $y\in Y$. If $Y$ is $\aS$, then $X$ is also $\aS$.
\end{enumerate}
\end{Cor}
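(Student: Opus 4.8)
The plan is to deduce both assertions directly from Theorem~\ref{T11}, since the hypotheses listed here are in each case either identical to, or stronger than, those of that theorem. The only conceptual work is to unravel the definition of a perfect mapping and to invoke the elementary implication \emph{open} $\Rightarrow$ \emph{$\theta$-open} recorded just before Figure~\ref{dig3}.

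For the first assertion I would proceed as follows. Recall that a perfect mapping is, by definition, a continuous closed surjection $f\colon X\to Y$ all of whose fibres $f^{-1}(y)$ are compact. Hence a $\theta$-open perfect map is precisely a surjective, $\theta$-open, closed map with compact fibres, which is exactly the hypothesis of Theorem~\ref{T11}; continuity is an additional assumption that Theorem~\ref{T11} simply does not require. Thus, with $Y$ being $\aS$, Theorem~\ref{T11} immediately yields that $X$ is $\aS$. For the parenthetical ``open'' version I would observe that every open map is $\theta$-open, so an open perfect map is in particular a $\theta$-open perfect map, and the conclusion follows from the $\theta$-open case just established.

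For the second assertion the map $f$ is assumed surjective, open, closed, and with compact fibres. Using once more that every open map is $\theta$-open, $f$ is a surjective, $\theta$-open, closed mapping with compact fibres, so Theorem~\ref{T11} applies verbatim and gives that $X$ is $\aS$ whenever $Y$ is. I expect no genuine obstacle here: the entire content resides in Theorem~\ref{T11}, and the present statement merely repackages its hypotheses. The one point requiring a moment's care is the bookkeeping, namely confirming that ``perfect'' supplies exactly the ``closed with compact fibres'' part of the hypothesis and that the open/$\theta$-open reduction is legitimate, after which both items are immediate.
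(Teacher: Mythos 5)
Your derivation is correct and is exactly what the paper intends: the corollary is stated immediately after Theorem~\ref{T11} with no separate proof, precisely because a perfect surjection is a closed surjection with compact fibres and the implication open $\Rightarrow$ $\theta$-open is recorded just before Figure~\ref{dig3}. Both items therefore reduce verbatim to Theorem~\ref{T11}, as you say.
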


\begin{Th}
\label{T12}
Let $f:X\to Y$ be a surjective, almost open and closed mapping such that $f^{-1}(y)$ is compact in $X$ for each $y\in Y$. If $Y$ is $\wS$, then $X$ is also $\wS$.
\end{Th}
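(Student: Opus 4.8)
The plan is to mimic the proof of Theorem~\ref{T11}, replacing the role of $\theta$-openness (used there to pull closures of open sets back along $f$) by almost openness (used here to push nonempty open sets forward), and to work throughout with the ``meeting nonempty open sets'' reformulation of $\wS$ recorded after Definition~\ref{D3}.

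First I would set up the fibrewise data exactly as in Theorem~\ref{T11}. Given a sequence $(\mathcal{U}_n)$ of open covers of $X$, for each $y\in Y$ and each $n$ the compactness of $f^{-1}(y)$ yields a finite $\mathcal{V}_n^y\subseteq\mathcal{U}_n$ with $f^{-1}(y)\subseteq\cup\mathcal{V}_n^y$, and since $f$ is closed there is an open $U_y^{(n)}\ni y$ in $Y$ with $f^{-1}(U_y^{(n)})\subseteq\cup\mathcal{V}_n^y$ (the usual tube-lemma argument). Then $\mathcal{W}_n=\{U_y^{(n)}:y\in Y\}$ is an open cover of $Y$; applying the $\wS$ property of $Y$ produces finite sets $\mathcal{H}_n=\{U_{y_i}^{(n)}:1\le i\le k_n\}\subseteq\mathcal{W}_n$ so that for every finite family of nonempty open subsets of $Y$ there are infinitely many $n$ for which $\cup\mathcal{H}_n$ meets every member of that family. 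I would set $\mathcal{V}_n=\cup_{1\le i\le k_n}\mathcal{V}_n^{y_i}$, a finite subset of $\mathcal{U}_n$, and record the key containment $f^{-1}(\cup\mathcal{H}_n)=\cup_i f^{-1}(U_{y_i}^{(n)})\subseteq\cup\mathcal{V}_n$.

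The heart of the argument is the translation of the meeting condition from $Y$ back to $X$, and this is where almost openness enters. Given a finite family $\mathcal{F}$ of nonempty open subsets of $X$, for each $O\in\mathcal{F}$ I pick $x\in O$; since $f$ is almost open, $\overline{f(O)}$ is a neighbourhood of $f(x)$, so $O^*:=\Int(\overline{f(O)})$ is a nonempty open subset of $Y$. Then $\{O^*:O\in\mathcal{F}\}$ is a finite family of nonempty open sets in $Y$, and the choice of $(\mathcal{H}_n)$ gives infinitely many $n$ with $(\cup\mathcal{H}_n)\cap O^*\ne\emptyset$ for all $O\in\mathcal{F}$ simultaneously. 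For such an $n$ and a fixed $O$, a point of $(\cup\mathcal{H}_n)\cap O^*$ lies in $\overline{f(O)}$ and has $(\cup\mathcal{H}_n)\cap O^*$ as an open neighbourhood, so by density of $f(O)$ in $\overline{f(O)}$ this neighbourhood meets $f(O)$; hence there is $x\in O$ with $f(x)\in\cup\mathcal{H}_n$, i.e. $x\in f^{-1}(\cup\mathcal{H}_n)\subseteq\cup\mathcal{V}_n$. Thus $O\cap(\cup\mathcal{V}_n)\ne\emptyset$ for every $O\in\mathcal{F}$, for infinitely many $n$, which is exactly what the reformulation of $\wS$ demands (taking singletons $\mathcal{F}=\{O\}$ also secures density of $\cup_n\cup\mathcal{V}_n$). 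Therefore $(\mathcal{V}_n)$ witnesses that $X$ is $\wS$.

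I expect the main obstacle to be precisely this forward--backward transfer: unlike in Theorem~\ref{T11}, where $\theta$-openness lets one pull $\overline{U_{y_i}^{(n)}}$ back inside $\overline{f^{-1}(U_{y_i}^{(n)})}$, here one has no control over closures under $f^{-1}$, and must instead manufacture the auxiliary open sets $O^*$ in $Y$ and rely on almost openness to guarantee they are nonempty and on the density of $f(O)$ in $\overline{f(O)}$ to convert a $Y$-witness into an $X$-witness. Verifying that the \emph{same} (infinitely many) indices $n$ work simultaneously for all of $\mathcal{F}$ is immediate once $\mathcal{F}$ is replaced by the finite family $\{O^*:O\in\mathcal{F}\}$, so no further difficulty arises.
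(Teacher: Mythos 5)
Your proof is correct and is exactly the intended adaptation of the argument for Theorem~\ref{T11}: the paper omits the proof of Theorem~\ref{T12}, presenting it as the analogue obtained by the same fibrewise tube-lemma construction, with almost openness replacing $\theta$-openness in the transfer step. Your handling of that transfer --- forming $O^*=\Int(\overline{f(O)})$, using almost openness to see it is nonempty, and using density of $f(O)$ in $\overline{f(O)}$ to pull a witness back into $O$ --- is precisely the right (and correctly executed) modification.
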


\begin{Cor}
\label{C9}
\hfill
\begin{enumerate}[wide=0pt,label={\upshape(\arabic*)},
leftmargin=*]
  \item If $f:X\to Y$ is an almost open (respectively, $\theta$-open, open), perfect mapping from a space $X$ onto a $\wS$ space $Y$, then $X$ is also $\wS$.
  \item Let $f:X\to Y$ be a surjective, $\theta$-open (respectively, open) and closed mapping such that $f^{-1}(y)$ is compact in $X$ for each $y\in Y$. If $Y$ is $\wS$, then $X$ is also $\wS$.
\end{enumerate}
\end{Cor}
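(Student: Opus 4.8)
The plan is to derive both assertions directly from Theorem~\ref{T12}, which already establishes the $\wS$ conclusion for a surjective, almost open, closed mapping with compact fibres. The one ingredient I would isolate first is the chain of implications recorded in the remark preceding Figure~\ref{dig3}: every open map is $\theta$-open and every $\theta$-open map is almost open. Thus for any mapping, open $\Rightarrow$ $\theta$-open $\Rightarrow$ almost open, so that in all of the "respectively" cases appearing in the statement the map is in particular almost open.

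For (1), I would begin by unwinding the word \emph{perfect}: a perfect mapping is by definition closed with compact fibres, and it is assumed here to act onto $Y$, hence to be surjective. In each of the three listed cases the map is moreover almost open---by hypothesis in the almost open case, and by the implication chain above in the $\theta$-open and open cases. Consequently $f$ satisfies every hypothesis of Theorem~\ref{T12}: it is a surjective, almost open, closed mapping with $f^{-1}(y)$ compact for each $y\in Y$. Since $Y$ is $\wS$, Theorem~\ref{T12} immediately yields that $X$ is $\wS$.

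For (2), the hypotheses themselves already supply surjectivity, closedness, and compactness of the fibres, so the only thing to check is that $f$ is almost open; but this follows at once from being $\theta$-open (respectively, open) via the same implications. Hence $f$ again meets the requirements of Theorem~\ref{T12}, and the conclusion that $X$ is $\wS$ follows.

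I do not expect any genuine obstacle: the entire content of the corollary is a reduction to Theorem~\ref{T12}, and the only auxiliary facts invoked are the implications among the openness conditions (open $\Rightarrow$ $\theta$-open $\Rightarrow$ almost open) together with the standard fact that a perfect map is closed with compact fibres. The sole point requiring any care is to make the identification of hypotheses explicit in each of the parenthetical cases, so that no continuity assumption is silently used where Theorem~\ref{T12} does not require one.
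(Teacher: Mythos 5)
Your proposal is correct and is exactly the intended derivation: the paper states Corollary~\ref{C9} without proof as an immediate consequence of Theorem~\ref{T12} together with the implications open $\Rightarrow$ $\theta$-open $\Rightarrow$ almost open recorded before Figure~\ref{dig3}. Your careful unwinding of ``perfect'' and of the parenthetical cases matches what the paper leaves implicit.
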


We also obtain a similar observation for the $\wSk$ property.
\begin{Th}
\label{T13}
Let $f:X\to Y$ be a surjective, open and closed mapping such that $f^{-1}(y)$ is compact in $X$ for each $y\in Y$. If $Y$ is $\wSk$, then $X$ is also $\wSk$.
\end{Th}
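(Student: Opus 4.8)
The plan is to adapt the argument of Theorem~\ref{T11} to the $\wSk$ setting, the essential novelty being the production of a suitable dense subset of $X$ out of one in $Y$. Given a sequence $(\mathcal{U}_n)$ of open covers of $X$, I would first exploit compactness of the fibres: for each $y\in Y$ and each $n$, choose a finite $\mathcal{V}_n^y\subseteq\mathcal{U}_n$ with $f^{-1}(y)\subseteq\cup\mathcal{V}_n^y$. Then, using the closedness of $f$ via the tube lemma (the set $f(X\setminus\cup\mathcal{V}_n^y)$ is closed and misses $y$ since $f^{-1}(y)\subseteq\cup\mathcal{V}_n^y$), I obtain an open $U_y^{(n)}\ni y$ in $Y$ with $f^{-1}(U_y^{(n)})\subseteq\cup\mathcal{V}_n^y$. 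This makes $\mathcal{W}_n=\{U_y^{(n)} : y\in Y\}$ an open cover of $Y$ for each $n$.

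Next I would apply the $\wSk$ property of $Y$ to $(\mathcal{W}_n)$. This yields a dense $Z\subseteq Y$ and finite sets $\mathcal{H}_n=\{U_{y_i}^{(n)} : 1\le i\le k_n\}\subseteq\mathcal{W}_n$ such that each finite $F\subseteq Z$ lies in $\cup\mathcal{H}_n$ for infinitely many $n$. Setting $\mathcal{V}_n=\cup_{1\le i\le k_n}\mathcal{V}_n^{y_i}$ produces finite subsets of $\mathcal{U}_n$. The candidate dense witness in $X$ is $D=f^{-1}(Z)$.

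The crucial step, and the point where this proof diverges from that of Theorem~\ref{T11}, is verifying that $D$ is dense in $X$; here I would use the openness of $f$. For any nonempty open $W\subseteq X$, the set $f(W)$ is nonempty and open, so $f(W)\cap Z\ne\emptyset$ by density of $Z$; any preimage of such a point lands in $W\cap f^{-1}(Z)$, whence $D$ is dense. In contrast to the $\aS$ case, no closures intervene in the selection condition for $\wSk$, so $\theta$-openness is not needed for the covering part and plain openness serves solely to secure this density.

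Finally I would verify the selection condition. Given a finite $F\subseteq D$, its image $f(F)$ is a finite subset of $Z$, hence $f(F)\subseteq\cup\mathcal{H}_n$ for infinitely many $n$. For each such $n$ and each $x\in F$, pick $i$ with $f(x)\in U_{y_i}^{(n)}$; then $x\in f^{-1}(U_{y_i}^{(n)})\subseteq\cup\mathcal{V}_n^{y_i}\subseteq\cup\mathcal{V}_n$, so $F\subseteq\cup\mathcal{V}_n$ for infinitely many $n$. Thus $D$ and $(\mathcal{V}_n)$ witness that $X$ is $\wSk$. I expect the only genuine obstacle to be the density of $f^{-1}(Z)$; the rest is a routine transcription of the perfect-map bookkeeping from Theorem~\ref{T11}, with ``for some $n$'' upgraded to ``for infinitely many $n$.''
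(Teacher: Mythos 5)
Your proof is correct and is exactly the adaptation of Theorem~\ref{T11} that the paper intends (the paper omits the proof of Theorem~\ref{T13} as analogous): the tube-lemma step from closedness and compact fibres, the transfer of the selection via $f^{-1}(U_{y_i}^{(n)})\subseteq\cup\mathcal{V}_n^{y_i}$, and the use of plain openness solely to make $f^{-1}(Z)$ dense all match the intended argument. The observation that no closures appear in the $\wSk$ selection condition, so $\theta$-openness can be weakened to openness here, is precisely why the hypothesis of Theorem~\ref{T13} differs from that of Theorem~\ref{T11}.
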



\begin{Cor}
\label{C10}
If $f:X\to Y$ is an open, perfect mapping from a space $X$ onto a $\wSk$ space $Y$, then $X$ is also $\wSk$.
\end{Cor}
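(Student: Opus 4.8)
The plan is to obtain this corollary as an immediate instance of Theorem~\ref{T13}. First I would unpack the term \emph{perfect mapping}: by definition a perfect map is a continuous closed surjection each of whose point-preimages $f^{-1}(y)$ is compact. Hence the assumption that $f$ is an open perfect map of $X$ onto $Y$ already supplies everything Theorem~\ref{T13} requires, namely that $f$ be a surjective, open and closed mapping with $f^{-1}(y)$ compact for every $y\in Y$. Since $Y$ is assumed to be $\wSk$, Theorem~\ref{T13} applies verbatim and yields that $X$ is $\wSk$.

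Should one prefer a self-contained argument in place of the citation, I would model it on the proof of Theorem~\ref{T11} (the $\aS$ analogue for $\theta$-open perfect maps), replacing the closure estimates there by their open-set counterparts. Concretely, starting from a sequence $(\mathcal{U}_n)$ of open covers of $X$ and a point $y\in Y$, compactness of $f^{-1}(y)$ gives a finite $\mathcal{V}_n^y\subseteq\mathcal{U}_n$ covering the fibre, and closedness of $f$ produces an open set $U_y^{(n)}\ni y$ with $f^{-1}(U_y^{(n)})\subseteq\cup\mathcal{V}_n^y$. The families $\mathcal{W}_n=\{U_y^{(n)} : y\in Y\}$ are open covers of $Y$, to which the $\wSk$ property of $Y$ is applied to extract a dense set $Z\subseteq Y$ together with finite selections $\mathcal{H}_n\subseteq\mathcal{W}_n$ such that each finite $F\subseteq Z$ lies in $\cup\mathcal{H}_n$ for infinitely many $n$.

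The one point needing verification is that pulling $Z$ back along $f$ produces a dense witnessing set in $X$: since $f$ is open, for any nonempty open $U\subseteq X$ the image $f(U)$ is nonempty open and hence meets $Z$, so $f^{-1}(Z)$ is dense in $X$. Setting $\mathcal{V}_n=\cup_i\mathcal{V}_n^{y_i}$, where the $y_i$ index $\mathcal{H}_n$, any finite $F\subseteq f^{-1}(Z)$ has $f(F)\subseteq Z$ finite, hence $f(F)\subseteq\cup\mathcal{H}_n$ for infinitely many $n$, whence $F\subseteq f^{-1}(\cup\mathcal{H}_n)\subseteq\cup\mathcal{V}_n$ for those same $n$. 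Thus $f^{-1}(Z)$ and $(\mathcal{V}_n)$ witness $\wSk$ for $X$. There is essentially no obstacle beyond recording that openness of $f$ lets the fibre-covering families be pulled back directly, without passing to closures, which is precisely why the plain ``open'' hypothesis (rather than ``$\theta$-open'') already suffices for the $\wSk$ conclusion.
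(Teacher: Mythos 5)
Your proposal is correct and matches the paper's route exactly: the corollary is obtained as an immediate instance of Theorem~\ref{T13}, since an open perfect map is in particular a surjective, open and closed map with compact fibres. The optional self-contained argument you sketch (pulling the fibre covers back via closedness, and using openness of $f$ to see that $f^{-1}(Z)$ is dense) is also a correct rendering of how the paper's Theorem~\ref{T13} would be proved by adapting Theorem~\ref{T11}.
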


A preimage of a $\aS$ (respectively, $\wS$, $\wSk$) space under a closed 2-to-1 continuous mapping may not be $\aS$ (respectively, $\wS$, $\wSk$). The space $X$ as in Example~\ref{E1} is $\aS$ but $AD(X)$ is not $\aS$. Also if we consider $X$ as in Example~\ref{E2}, then $X$ is a $\wSk$ (and hence $\wS$) space but $AD(X)$ is not $\wS$ (and hence not $\wSk$). In both cases it can be seen that the projection mapping $f:AD(X)\to X$ is closed 2-to-1 continuous.

\begin{Prop}
\label{P7}
Let $X=\cup_{m\in\mathbb{N}}X_m$, where $X_m\subseteq X_{m+1}$ for each $m$.
\begin{enumerate}[wide=0pt,label={\upshape(\arabic*)}]
\item If each $X_m$ is $\aS$, then $X$ is also $\aS$.
\item If each $X_m$ is $\wS$, then $X$ is also $\wS$.
\item If each $X_m$ is  $\wSk$, then $X$ is also $\wSk$.
    \end{enumerate}
\end{Prop}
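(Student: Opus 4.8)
The plan is to use the standard device for countable increasing unions: fix a partition $\mathbb{N}=\bigcup_{k\in\mathbb{N}}N_k$ into pairwise disjoint infinite sets, and for each $k$ run the relevant selection property of $X_k$ along the block $N_k$. Given a sequence $(\mathcal{U}_n)$ of open covers of $X$, for each $k$ the traces $\{U\cap X_k : U\in\mathcal{U}_n\}$ ($n\in N_k$) form a sequence of open covers of the subspace $X_k$; applying the property of $X_k$ yields finite selections which I lift back, by choosing one $U\in\mathcal{U}_n$ for each chosen trace, to finite sets $\mathcal{V}_n\subseteq\mathcal{U}_n$ whose trace $(\cup\mathcal{V}_n)\cap X_k$ is the union of the chosen traces. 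The single feature of the increasing structure that I will exploit throughout is that every finite subset of $X$, being finite, already lies in some single $X_K$.

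For (1), after the lifting I only need that subspace closures sit inside ambient closures, i.e. $\overline{A}^{X_k}\subseteq\overline{A}^{X}$. Hence if $F\subseteq X$ is finite, I pick $K$ with $F\subseteq X_K$; the $\aS$ property of $X_K$ gives $F\subseteq\overline{\cup\mathcal{V}_n}^{X_K}\subseteq\overline{\cup\mathcal{V}_n}^{X}$ for infinitely many $n\in N_K$, which is exactly the $\overline{\Omega}$ witness for $X$. For (2) the same scaffolding works once finite families of open sets are replaced by finite sets of points: given nonempty open $U_1,\dots,U_p\subseteq X$, I choose $x_i\in U_i$, locate $K$ with $\{x_1,\dots,x_p\}\subseteq X_K$, and note that each $U_i\cap X_K$ is then a nonempty open subset of $X_K$. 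The $\wS$ witness for $X_K$ meets all of $U_1\cap X_K,\dots,U_p\cap X_K$ simultaneously for infinitely many $n\in N_K$, and meeting $U_i\cap X_K$ forces meeting $U_i$; so $X$ is $\wS$.

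The delicate case, and the one I expect to be the main obstacle, is (3). Applying $\wSk$ to each $X_k$ produces a dense set $Y_k\subseteq X_k$ together with selections along $N_k$ covering every finite $F\subseteq Y_k$ infinitely often, and $Y:=\bigcup_k Y_k$ is dense in $X$, since $\overline{Y_k}^{X}\supseteq\overline{Y_k}^{X_k}=X_k$ yields $\overline{Y}^{X}\supseteq\bigcup_k X_k=X$. The difficulty is that, unlike in (1) and (2), the test objects are finite subsets of the chosen dense set rather than of $X$, and ``covered for infinitely many $n$'' is \emph{not} closed under finite unions taken across different blocks $N_k$: a finite $F\subseteq Y$ splits among several $Y_k$, and although each piece is handled along its own block, there need be no single index at which all of $F$ is captured. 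The natural remedy is to arrange the witnessing dense sets coherently along the chain, i.e. to produce them nested $Y_1\subseteq Y_2\subseteq\cdots$, so that every finite $F\subseteq Y$ localizes into a single $Y_K$ and is then covered infinitely often along $N_K$. Securing such nested witnessing dense sets---equivalently, witnessing the $\wSk$ property of each $X_k$ on a prescribed dense subset inherited from the previous stage---is precisely the step that must be handled with care, and is where the increasing structure $X_m\subseteq X_{m+1}$ has to be used in an essential way rather than merely to localize finite sets.
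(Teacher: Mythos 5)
Parts (1) and (2) of your argument are correct and complete: the partition of $\mathbb{N}$ into infinite blocks $N_k$, the passage to traces on $X_k$, the lift back to finite subfamilies of $\mathcal{U}_n$, and the localization of the test object (a finite set of points, respectively a finite family of open sets via chosen points) into a single $X_K$ all work as you describe, and the inclusion $\overline{A}^{X_K}\subseteq\overline{A}^{X}$, respectively the implication that meeting $U_i\cap X_K$ forces meeting $U_i$, finishes each case.

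Part (3), however, you have not proved: you correctly isolate the obstruction, but you only gesture at a remedy without carrying it out, and the remedy you sketch is not available from the hypotheses. The $\wSk$ property of $X_{k+1}$ produces \emph{some} dense $Y_{k+1}\subseteq X_{k+1}$ with no control over which one, so you cannot demand $Y_k\subseteq Y_{k+1}$; nor can you enlarge a given witness to $Y_{k+1}\cup Y_k$ after the fact, since the covering condition is guaranteed only for finite subsets of the original $Y_{k+1}$, and $Y_k$, being dense merely in $X_k$, may well be nowhere dense in $X_{k+1}$. Hence a finite $F\subseteq\bigcup_k Y_k$ meeting several of the $Y_k$ genuinely escapes every block, and your argument stops there. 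Note that the paper itself treats exactly this assembly of per-piece dense witnesses as the crux in a parallel setting: Remark~\ref{R3} leaves the analogous union theorem for $\wSk$ open under a plain $\Omega$-wrapping, and Theorem~\ref{T55} obtains it only by strengthening the hypothesis to a \emph{strongly} $\Omega$-wrapping, i.e.\ by assuming outright that any choice of dense subsets of the pieces again localizes finite sets --- a condition an increasing chain does not automatically satisfy. So to complete (3) you need either a genuinely new idea for combining the dense witnesses across blocks, or an additional hypothesis of that kind; as it stands, assertion (3) carries an unmet proof obligation.
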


In combination with Proposition~\ref{P7}, Theorem~\ref{T11}, Theorem~\ref{T12} and Theorem~\ref{T13} we obtain the following.
\begin{Prop}
\label{P5}
Let $X$ be a $\sigma$-compact space. Then $X$ is
\begin{enumerate}[wide=0pt,label={\upshape(\arabic*)}]
\item productively $\aS$.
\item productively $\wS$.
\item productively $\wSk$.
\end{enumerate}
\end{Prop}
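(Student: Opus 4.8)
The plan is to reduce the product $X\times Y$ (where $Y$ carries the relevant property) to a countable increasing union whose members sit over the compact pieces of $X$, and then dispose of each member by the projection-mapping theorems already proved. The ingredients are exactly those named in the preceding sentence: Proposition~\ref{P7} for the countable union, and Theorem~\ref{T11}, Theorem~\ref{T12}, Theorem~\ref{T13} for the three properties respectively.

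First I would fix a space $Y$ having the property $\aS$ (respectively $\wS$, $\wSk$). Since $X$ is $\sigma$-compact, write $X=\cup_{m\in\mathbb{N}}K_m$, where each $K_m$ is compact and $K_m\subseteq K_{m+1}$. Then $X\times Y=\cup_{m\in\mathbb{N}}(K_m\times Y)$ and $K_m\times Y\subseteq K_{m+1}\times Y$ for each $m$, so the hypotheses of Proposition~\ref{P7} on the ambient product are in place once each member $K_m\times Y$ is shown to have the property in question.

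Next, for each $m$ consider the projection $\pi_m\colon K_m\times Y\to Y$. This map is continuous and surjective; it is open because coordinate projections are always open; it is closed because the factor $K_m$ is compact (the closed-projection theorem); and each fibre $\pi_m^{-1}(y)=K_m\times\{y\}$ is compact. Since every open map is $\theta$-open and hence almost open (as recorded before Figure~\ref{dig3}), $\pi_m$ meets the hypotheses of Theorem~\ref{T11} (respectively Theorem~\ref{T12}, Theorem~\ref{T13}). As $Y$ is $\aS$ (respectively $\wS$, $\wSk$), these theorems yield that $K_m\times Y$ is $\aS$ (respectively $\wS$, $\wSk$) for every $m$. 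Applying Proposition~\ref{P7} to the increasing union $X\times Y=\cup_{m}(K_m\times Y)$ then gives that $X\times Y$ itself is $\aS$ (respectively $\wS$, $\wSk$); since $Y$ was arbitrary with the respective property, $X$ is productively $\aS$ (respectively $\wS$, $\wSk$).

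The substantive points—and the natural place where a reader might look for a gap—are the verifications that $\pi_m$ is closed and that its fibres are compact, both of which rest entirely on the compactness of $K_m$. Everything else in the argument is a direct invocation of the mapping theorems and of the union proposition, so I expect no further obstacle.
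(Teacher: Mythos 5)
Your proposal is correct and is essentially the paper's own argument: the paper derives Proposition~\ref{P5} precisely by combining Proposition~\ref{P7} with Theorems~\ref{T11}, \ref{T12} and \ref{T13}, applied exactly as you do to the closed, open (hence $\theta$-open and almost open) projections $K_m\times Y\to Y$ with compact fibres. The only detail you fill in explicitly that the paper leaves tacit is the verification that projection along a compact factor is closed, which is standard.
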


\begin{Rem}
\label{R2}
Consider $X$ and $Y$ as in Example~\ref{E8}. Then $X$ is separable and $Y$ is $\aS$. Let $D$ be a countable dense subset of $X$. Since every $\sigma$-compact space is productively $\aS$ (by Proposition~\ref{P5}), $D$ is productively $\aS$. On the other hand, $X$ is not productively $\aS$ because $X\times Y$ is not $\aS$ (see Example~\ref{E8}).
\end{Rem}

\section{Few remarks}
The study of weaker forms of the Scheepers property may further be continued in the following ways. Throughout the section all covers are assumed to be countable.
\begin{Th}
\label{T28}
The following assertions are equivalent.
\begin{enumerate}[wide=0pt,label={\upshape(\arabic*)},leftmargin=*
,ref={\theTh(\arabic*)}]
  \item $X$ is $\aS$.
  \item $X$ satisfies $\Uf(\Gamma,\overline{\Omega})$.
  \item $X$ satisfies $\Sf(\Gamma,\overline{\Lambda^{wgp}})$.
  \item \label{T28-4} For each sequence $(\mathcal{U}_n)$ of $\gamma$-covers of $X$ there is a sequence $(\mathcal{V}_n)$ of pairwise disjoint finite sets such that for each $n$ $\mathcal{V}_n\subseteq\mathcal{U}_n$ and for each finite set $F\subseteq X$ there is a $n$ such that $F\subseteq\overline{\cup\mathcal{V}_n}$.
%

  \item $X$ satisfies $\Sf(\Gamma,\overline{\mathcal{O}^{wgp}})$.

  \item $X$ satisfies $\Uf(\mathcal{O},\overline{\mathcal{O}^{wgp}})$.
  \end{enumerate}
\end{Th}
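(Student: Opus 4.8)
The plan is to close the six conditions around a cycle anchored at Theorem~\ref{T45}, which already supplies $(1)\Leftrightarrow(6)$. It then suffices to prove $(1)\Rightarrow(2)$, the two branches $(2)\Rightarrow(3)$ and $(2)\Rightarrow(4)$, the mergers $(3)\Rightarrow(5)$ and $(4)\Rightarrow(5)$, and finally $(5)\Rightarrow(6)$; together with $(6)\Rightarrow(1)$ these force all six assertions to coincide. Here $(1)\Rightarrow(2)$ is immediate from $\Gamma\subseteq\mathcal{O}$, and $(3)\Rightarrow(5)$ is immediate from the inclusion $\overline{\Lambda^{wgp}}\subseteq\overline{\mathcal{O}^{wgp}}$. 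I would use throughout that, since each selected $\mathcal{V}_n$ is finite, $\overline{\cup\mathcal{V}_n}=\bigcup_{U\in\mathcal{V}_n}\overline{U}$; this splitting of the closure of a finite union is exactly what lets the ``almost'' (closure) versions imitate the classical groupable combinatorics.

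For $(2)\Rightarrow(4)$ I would first tag the given $\gamma$-covers so that the families $(\mathcal{U}_n)$ are pairwise disjoint, and then apply $(2)$ directly: this yields finite $\mathcal{V}_n\subseteq\mathcal{U}_n$ with $\{\cup\mathcal{V}_n:n\in\mathbb{N}\}\in\overline{\Omega}$, meaning each finite $F\subseteq X$ lies in $\overline{\cup\mathcal{V}_n}$ for some $n$; the blocks are automatically pairwise disjoint, which is precisely $(4)$ (the degenerate clause $\cup\mathcal{V}_n=X$ of $\Uf$ only makes the conclusion easier). The step $(4)\Rightarrow(5)$ is then a rereading of definitions: the family $\cup_n\mathcal{V}_n$, grouped by its pairwise disjoint finite blocks $\mathcal{V}_n$, meets the requirement defining $\overline{\mathcal{O}^{wgp}}$, and testing with singletons shows $\{\overline{U}:U\in\cup_n\mathcal{V}_n\}$ covers $X$.

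The crucial and most delicate step is $(2)\Rightarrow(3)$, where the \emph{large} condition $\overline{\Lambda}$ must be manufactured, and I would obtain it by a partition argument. Fix a partition $\mathbb{N}=\bigsqcup_{k}N_k$ into infinitely many infinite sets and (with the $\mathcal{U}_n$ tagged pairwise disjoint) apply $(2)$ to each subsequence $(\mathcal{U}_n:n\in N_k)$ to get finite $\mathcal{V}_n\subseteq\mathcal{U}_n$, $n\in N_k$, with $\{\cup\mathcal{V}_n:n\in N_k\}\in\overline{\Omega}$; since the $N_k$ partition $\mathbb{N}$ this assembles into one selection $(\mathcal{V}_n)_{n\in\mathbb{N}}$. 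The weakly groupable clause holds with the blocks $\mathcal{V}_n$ by testing finite sets against the $\overline{\Omega}$ property for a single $k$. For largeness, given $x\in X$ and any $k$, applying that $\overline{\Omega}$ property to $F=\{x\}$ gives some $n_k\in N_k$ with $x\in\overline{\cup\mathcal{V}_{n_k}}$, hence $x\in\overline{U}$ for some $U\in\mathcal{V}_{n_k}$; as $k$ ranges over $\mathbb{N}$ and the blocks are disjoint these $U$ are infinitely many, so $\cup_n\mathcal{V}_n\in\overline{\Lambda}$ and thus $\cup_n\mathcal{V}_n\in\overline{\Lambda^{wgp}}$. This extraction is the main obstacle: a bare application of $(2)$ only gives ``for some $n$'', and spreading the selection over infinitely many disjoint index blocks is exactly what upgrades this to ``for infinitely many $n$''.

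For $(5)\Rightarrow(6)$ I would use the finite-union trick available because all covers here are countable. Given open covers $(\mathcal{U}_n)$, enumerate $\mathcal{U}_n=\{U^{(n)}_m:m\in\mathbb{N}\}$ and set $\mathcal{W}_n=\{\bigcup_{i\le m}U^{(n)}_i:m\in\mathbb{N}\}$, a $\gamma$-cover. Applying $(5)$ gives finite $\mathcal{H}_n\subseteq\mathcal{W}_n$ with $\cup_n\mathcal{H}_n\in\overline{\mathcal{O}^{wgp}}$; replacing each $\bigcup_{i\le m}U^{(n)}_i\in\mathcal{H}_n$ by the finitely many $U^{(n)}_i$ with $i\le m$ defines finite $\mathcal{V}_n\subseteq\mathcal{U}_n$ whose point-set union equals that of $\cup\mathcal{H}_n$. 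Transporting the weakly groupable grouping of the family $\cup_n\mathcal{H}_n$ to the sets $\cup\mathcal{V}_n$ (each block is finite, hence meets only finitely many indices $n$) shows $\{\cup\mathcal{V}_n:n\in\mathbb{N}\}\in\overline{\mathcal{O}^{wgp}}$, that is, $\Uf(\mathcal{O},\overline{\mathcal{O}^{wgp}})$, which is $(6)$. Combined with Theorem~\ref{T45}, all six assertions are equivalent; beyond the largeness argument of $(2)\Rightarrow(3)$, the only care needed is the routine bookkeeping that closures of finite unions split, so nothing is lost in passing between the $\mathcal{W}_n$ and the $\mathcal{U}_n$.
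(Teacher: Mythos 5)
Your architecture is genuinely different from the paper's. The paper explicitly proves $(2)\Rightarrow(1)$, $(2)\Rightarrow(3)$, $(3)\Rightarrow(4)$, $(5)\Rightarrow(4)$ and $(6)\Rightarrow(5)$, closing the cycle with the trivial $(1)\Rightarrow(2)$, $(4)\Rightarrow(2)$ and Theorem~\ref{T45}; the heavy lifting sits in $(3)\Rightarrow(4)$ and $(5)\Rightarrow(4)$, which use the intersection covers $\{U^{(1)}_m\cap\dots\cap U^{(n)}_m\}$ and the delicate index extraction of \cite[Theorem 2]{cocVIII}. You bypass that argument entirely: you get $(4)$ for free from $(2)$ (after disjointifying the $\gamma$-covers, the selected blocks are automatically pairwise disjoint and $\overline{\Omega}$ is exactly the conclusion of $(4)$), and you shift the remaining work to $(2)\Rightarrow(3)$ and $(5)\Rightarrow(6)$. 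Your $(2)\Rightarrow(3)$ is in fact more careful than the paper's: the paper derives largeness from the stated inclusion $\overline{\Omega}\subseteq\overline{\Lambda}$, whereas your partition of $\mathbb{N}$ into infinitely many infinite blocks manufactures ``$x\in\overline{U}$ for infinitely many $U$'' explicitly, which is more robust. The cycle $(1)\Rightarrow(2)\Rightarrow(3)\Rightarrow(5)\Rightarrow(6)\Rightarrow(1)$ together with $(2)\Rightarrow(4)\Rightarrow(5)$ does make all six conditions equivalent.

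The one step that is under-argued is the ``transport'' in $(5)\Rightarrow(6)$. From $\cup_n\mathcal{H}_n\in\overline{\mathcal{O}^{wgp}}$ you get a partition of the \emph{family} $\cup_n\mathcal{H}_n$ into finite blocks $\mathcal{K}_j$; setting $A_j=\{n:\mathcal{K}_j\cap\mathcal{H}_n\neq\emptyset\}$ gives finite index sets with $\overline{\cup\mathcal{K}_j}\subseteq\overline{\cup_{n\in A_j}\cup\mathcal{V}_n}$, but these $A_j$ need not be pairwise disjoint (two blocks can each pick up a different element of the same $\mathcal{H}_n$), so they do not directly yield the required grouping of the indexed family $\{\cup\mathcal{V}_n:n\in\mathbb{N}\}$; repairing this by coarsening to intervals forces the two-partition parity argument that the paper uses in Theorem~\ref{T29}. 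The clean fix is available one line earlier: since each $\mathcal{W}_n$ is an increasing chain, $\cup\mathcal{H}_n$ equals its largest element, so one may assume each $\mathcal{H}_n$ is a singleton; then (after disjointifying the $\mathcal{W}_n$) the sets $A_j$ are automatically pairwise disjoint and partition $\mathbb{N}$, and the transport is immediate. With that observation inserted, your proof is complete and, on balance, shorter than the paper's.
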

\begin{proof}
$(2)\Rightarrow (1)$. Let $(\mathcal{U}_n)$ be a sequence of open covers of $X$. For each $n$ choose $\mathcal{U}_n=\{U_m^{(n)} : m\in\mathbb{N}\}$ and $\mathcal{W}_n=\{V_m^{(n)} : m\in\mathbb{N}\}$, where $V_m^{(n)}=\cup_{1\leq i\leq m}U_i^{(n)}$ for each $m$. Observe that $(\mathcal{W}_n)$ is a sequence of $\gamma$-covers of $X$. Since $X$ satisfies $\Uf(\Gamma,\overline{\Omega})$, there is a sequence $(V_{m_n}^{(n)})$ such that for each $n$ $V_{m_n}^{(n)}\in\mathcal{W}_n$ and $\{V_{m_n}^{(n)} : n\in\mathbb{N}\}\in\overline{\Omega}$. For each $n$ let $\mathcal{V}_n=\{U_i^{(n)} : 1\leq i\leq m_n\}$. Clearly $\cup\mathcal{V}_n=V_{m_n}^{(n)}$. It follows that $X$ satisfies $\Uf(\mathcal{O},\overline{\Omega})$.

$(2)\Rightarrow (3)$. Let $(\mathcal{U}_n)$ be a sequence of $\gamma$-covers of $X$. Without loss of generality we can assume that $\mathcal{U}_m\cap\mathcal{U}_n=\emptyset$ for $m\neq n$. Using the hypothesis we obtain a sequence $(\mathcal{V}_n)$ such that for each $n$ $\mathcal{V}_n$ is a finite subset of $\mathcal{U}_n$ and $\{\cup\mathcal{V}_n : n\in\mathbb{N}\}\in\overline{\Omega}$. It follows that $\{\cup\mathcal{V}_n : n\in\mathbb{N}\}\in\overline{\Lambda}$ and hence $\cup_{n\in\mathbb{N}}\mathcal{V}_n\in\overline{\Lambda}$. Now the partition $(\mathcal{V}_n)$ witnesses that $\cup_{n\in\mathbb{N}}\mathcal{V}_n\in\overline{\mathcal{O}^{wgp}}$. Thus $\cup_{n\in\mathbb{N}}\mathcal{V}_n\in\overline{\Lambda^{wgp}}$.

$(3)\Rightarrow (4)$. The proof for this implication is modelled in \cite[Theorem 2]{cocVIII}. Here we present complete proof for convenience of the reader. Let $(\mathcal{U}_n)$ be a sequence of $\gamma$-covers of $X$. Without loss of generality we can assume that $\mathcal{U}_m\cap\mathcal{U}_n=\emptyset$ for $m\neq n$. For each $n$ choose $\mathcal{U}_n=\{U_m^{(n)} : m\in\mathbb{N}\}$. Also for each $n$ we define $\mathcal{W}_n=\{U_m^{(1)}\cap\cdots\cap U_m^{(n)} : m\in\mathbb{N}\}\setminus\{\emptyset\}$. Clearly each $\mathcal{W}_n$ is a $\gamma$-cover of $X$. We can further assume that $\mathcal{W}_m\cap\mathcal{W}_n=\emptyset$ for $m\neq n$ by discarding elements, whenever required. We also in advance choose for each element of each $\mathcal{W}_n$ a representation as an intersection as in the definition. Since $X$ satisfies $\Sf(\Gamma,\overline{\Lambda^{wgp}})$, there is a sequence $(\mathcal{H}_n)$ such that for each $n$ $\mathcal{H}_n$ is a finite subset of $\mathcal{W}_n$ and $\cup_{n\in\mathbb{N}}\mathcal{H}_n\in\overline{\Lambda^{wgp}}$. Let $\cup_{n\in\mathbb{N}}\mathcal{H}_n=\cup_{n\in\mathbb{N}}\mathcal{K}_n$, where each $\mathcal{K}_n$ is finite and $\mathcal{K}_m\cap\mathcal{K}_n=\emptyset$ for $m\neq n$, and each finite set $F\subseteq X$ is contained in $\overline{\cup\mathcal{K}_n}$ for some $n$.

We now choose a sequence of positive integers $i_1<i_2<\dotsb$ as follows. Let $i_1\geq 1$ be so small such that  $\mathcal{H}_1\cap\mathcal{K}_j=\emptyset$ for all $j>i_1$.
Now choose $\mathcal{V}_1$ as the set of $U\in\mathcal{U}_1$ that appear as terms (if exist) in the representations of elements of $\mathcal{K}_j, j\leq i_1$.

Next take $i_2>i_1$ so small such that $\mathcal{H}_2\cap\mathcal{K}_j=\emptyset$ for all $j>i_2$. Choose $\mathcal{V}_2$ as the set of $U\in\mathcal{U}_2$ that appear as terms (if exist) in the representations of elements of $\mathcal{K}_j, j\leq i_2$.

Proceeding similarly we obtain a sequence $(\mathcal{V}_n)$ such that for each $n$ $\mathcal{V}_n$ is a finite subset of $\mathcal{U}_n$ and $\mathcal{V}_m\cap\mathcal{V}_n=\emptyset$ for $m\neq n$. Let $F$ be a finite subset of $X$. Then there exists a $n_0\in\mathbb{N}$ such that $F\subseteq\overline{\cup\mathcal{K}_{n_0}}$. Let $k_0\in\mathbb{N}$ be the least such that $n_0\leq i_{k_0}$. It is easy to see that $(\mathcal{H}_1\cup\mathcal{H}_2\cup\cdots\cup\mathcal{H}_{k_0-1})
\cap\mathcal{K}_{n_0}=\emptyset$. This implies that for each $V\in\mathcal{K}_{n_0}$ there exists a $U_V\in\mathcal{U}_{k_0}$ such that $U_V$ is a term in the representation of $V$. If $\mathcal{V}=\{U_V : V\in\mathcal{K}_{n_0}\}$, then $\cup\mathcal{K}_{n_0}\subseteq\cup\mathcal{V}\subseteq\cup\mathcal{V}_{k_0}$. Hence $X$ satisfies $(4)$.

$(5)\Rightarrow (4)$. The proof is similar to the preceding argument with necessary modifications.

$(6)\Rightarrow (5)$. Let $(\mathcal{U}_n)$ be a sequence of $\gamma$-covers of $X$. We can assume that $\mathcal{U}_m\cap\mathcal{U}_n=\emptyset$ for $m\neq n$. Since $X$ satisfies $\Uf(\mathcal{O},\overline{\mathcal{O}^{wgp}})$, there is a sequence $(\mathcal{V}_n)$ such that for each $n$ $\mathcal{V}_n$ is a finite subset $\mathcal{U}_n$ and $\{\cup\mathcal{V}_n : n\in\mathbb{N}\}\in\overline{\mathcal{O}^{wgp}}$. Clearly $\cup_{n\in\mathbb{N}}\mathcal{V}_n\in\overline{\mathcal{O}}$. Now choose $\{\cup\mathcal{V}_n : n\in\mathbb{N}\}=\cup_{n\in\mathbb{N}}\mathcal{H}_n$, where $(\mathcal{H}_n)$ is a sequence of pairwise disjoint finite sets. Using the sequence $(\mathcal{H}_n)$ we can find a sequence $(\mathcal{F}_n)$ of pairwise disjoint finite sets such that for each $n$ $\mathcal{F}_n\subseteq\cup_{n\in\mathbb{N}}\mathcal{V}_n$ with $\cup_{n\in\mathbb{N}}\mathcal{V}_n=\cup_{n\in\mathbb{N}}
\mathcal{F}_n$ and also $\cup\mathcal{F}_n=\cup\mathcal{H}_n$. Let $F$ be a finite subset of $X$. Choose a $n_0\in\mathbb{N}$ such that $F\subseteq\overline{\cup\mathcal{H}_{n_0}}=\overline{\cup
\mathcal{F}_{n_0}}$. It follows that $\cup_{n\in\mathbb{N}}\mathcal{V}_n\in\overline{\mathcal{O}^{wgp}}$ and consequently $X$ satisfies $\Sf(\Gamma,\overline{\mathcal{O}^{wgp}})$.
\end{proof}


Next result is in line (of the implication $(3)\Rightarrow (4)$) of \cite[Theorem 2]{cocVIII} with necessary modifications.
\begin{Th}
\label{T29}
If $X$ is $\aS$, then $X$ satisfies $\Sf(\Gamma,\overline{\Lambda})$ and each large cover of $X$ is a member of $\overline{\mathcal{O}^{wgp}}$.
\end{Th}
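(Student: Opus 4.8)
The plan is to treat the two conclusions separately: the first drops out of the equivalences already recorded in Theorem~\ref{T28}, while the genuine combinatorial work is reserved for the second.

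For the statement $\Sf(\Gamma,\overline{\Lambda})$, recall that by Theorem~\ref{T28} the property $\aS$ is equivalent to $\Sf(\Gamma,\overline{\Lambda^{wgp}})$. Since $\overline{\Lambda^{wgp}}\subseteq\overline{\Lambda}$ by definition, any sequence $(\mathcal{V}_n)$ of finite selections witnessing $\cup_{n}\mathcal{V}_n\in\overline{\Lambda^{wgp}}$ already witnesses $\cup_n\mathcal{V}_n\in\overline{\Lambda}$. Hence $X$ satisfies $\Sf(\Gamma,\overline{\Lambda})$ with no further work.

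For the second assertion, let $\mathcal{U}=\{U_n:n\in\mathbb{N}\}$ be a (countable) large cover; I must show $\mathcal{U}\in\overline{\mathcal{O}^{wgp}}$. First, $\mathcal{U}\in\overline{\mathcal{O}}$ is automatic, as $U\subseteq\overline{U}$ and $\mathcal{U}$ already covers $X$. The idea is to manufacture a sequence of $\gamma$-covers out of $\mathcal{U}$ whose finite selections can be read back as a weak grouping of $\mathcal{U}$. For each $k$ put $G^{k}_{m}=\bigcup_{i=k}^{m}U_i$ for $m\geq k$ and $\mathcal{G}_k=\{G^k_m:m\geq k\}$. Because $\mathcal{U}$ is large, every $x$ lies in some $U_i$ with $i\geq k$, so the nested sets $G^k_m$ form a $\gamma$-cover of $X$ (setting aside the degenerate case in which some tail $\bigcup_{i\geq k}U_i$ is already a finite union, where weak groupability is immediate). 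Applying $\Sf(\Gamma,\overline{\mathcal{O}^{wgp}})$ (Theorem~\ref{T28}) to $(\mathcal{G}_k)$ yields finite $\mathcal{H}_k\subseteq\mathcal{G}_k$ with $\bigcup_k\mathcal{H}_k\in\overline{\mathcal{O}^{wgp}}$, that is, a weak-groupability partition $\bigcup_k\mathcal{H}_k=\bigcup_n\mathcal{K}_n$ into pairwise disjoint finite blocks with each finite $F\subseteq X$ contained in $\overline{\cup\mathcal{K}_n}$ for some $n$. Since every member of $\mathcal{H}_k$ is an interval union $\bigcup_{i=k}^{M}U_i$, each $\cup\mathcal{K}_n$ is a finite union $\bigcup_{i\in S_n}U_i$ of original members of $\mathcal{U}$.

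The remaining step, and the main obstacle, is the back-translation. Because a single $U_i$ can occur in the representations of elements of $\mathcal{K}_l$ for many different $l$, the index sets $S_n$ overlap and $\{U_i:i\in S_n\}$ fails to be a partition of $\mathcal{U}$. To disjointify while retaining the covering property I follow the index-selection device of the implication $(3)\Rightarrow(4)$ in Theorem~\ref{T28} (itself modelled on \cite[Theorem 2]{cocVIII}): choose $i_1<i_2<\cdots$ so that $\mathcal{H}_k\cap\mathcal{K}_l=\emptyset$ for all $l>i_k$ — possible since the finite set $\mathcal{H}_k$ meets only finitely many of the pairwise disjoint blocks $\mathcal{K}_l$ — and use this to route the members $U_i$ into pairwise disjoint finite blocks in such a way that, for any finite $F$ with $F\subseteq\overline{\cup\mathcal{K}_{n_0}}$, all the $U_i$ representing $\mathcal{K}_{n_0}$ are collected into a single block. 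The heart of the verification is precisely this routing: showing that the block witnessing a given $F$ is not dismembered by the disjointification, which is exactly where the relations $\mathcal{H}_k\cap\mathcal{K}_l=\emptyset$ are invoked, just as in the cited argument. Finally I distribute any still-unused members of $\mathcal{U}$ one per block; this keeps each block finite and the family a partition of all of $\mathcal{U}$, while only enlarging unions, so the covering condition persists. This exhibits $\mathcal{U}\in\overline{\mathcal{O}^{wgp}}$, and the anticipated difficulty lies entirely in this disjointification bookkeeping that turns the overlapping interval unions into an honest pairwise disjoint finite partition of $\mathcal{U}$.
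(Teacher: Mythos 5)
Your strategy is the paper's: both conclusions are routed through Theorem~\ref{T28}, and for the second one you build, exactly as the paper does, the tail-union $\gamma$-covers $\mathcal{G}_k$ from an enumeration of the large cover and feed them to the selection principle. The first assertion ($\Sf(\Gamma,\overline{\Lambda})$ via $\overline{\Lambda^{wgp}}\subseteq\overline{\Lambda}$) is correct and complete.

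The gap is in the step you yourself label ``the heart of the verification''; you assert its conclusion rather than prove it, and the device you point to is not the one that does the work. The relations $\mathcal{H}_k\cap\mathcal{K}_l=\emptyset$ for $l>i_k$ come from the proof of $(3)\Rightarrow(4)$ in Theorem~\ref{T28}, where their job is to make the selections from \emph{distinct} $\gamma$-covers pairwise disjoint; they do not produce a partition of $\mathcal{U}$ itself, because the sets ``members of $\mathcal{U}$ appearing in representations of elements of $\mathcal{K}_j$, $j\leq i_k$'' neither are pairwise disjoint (a single $U_i$ recurs across many blocks) nor exhaust $\mathcal{U}$. What the paper actually does is cut the index line of $\mathcal{U}$: using that every member of $\mathcal{H}_j$ is a union $\bigcup_{j<i\leq m}U_i$, it chooses $n_1<n_2<\cdots$ so that the $j$-th batch of blocks only involves $U_i$ with $n_{j-1}\leq i\leq n_{j+1}$. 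This exposes the second, more serious omission: since each $\cup\mathcal{K}_m$ draws on indices from the range $[n_{m-1},n_{m+1}]$, which straddles the two consecutive intervals $[n_{m-1},n_m)$ and $[n_m,n_{m+1})$, the naive partition of $\mathcal{U}$ into these basic intervals \emph{does} dismember the witnessing block --- precisely the failure you promise to rule out. The paper's remedy is to form two interleaved coarser partitions, $\bigl(\{U_i:n_{2k-2}\leq i<n_{2k}\}\bigr)_k$ and $\bigl(\{U_i:n_{2k-1}\leq i<n_{2k+1}\}\bigr)_k$, so that every $\cup\mathcal{K}_m$ sits inside a block of one of the two according to the parity of $m$, and then to argue that one of these two partitions serves all finite $F$ simultaneously (if $F_1$ defeats the first and $F_2$ the second, the block absorbing $F_1\cup F_2$ gives a contradiction). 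Neither the interleaving nor this final pigeonhole appears in your sketch, and without them the claim that ``all the $U_i$ representing $\mathcal{K}_{n_0}$ are collected into a single block'' is exactly what remains to be proved. Your closing move of distributing leftover members of $\mathcal{U}$ one per block is fine and standard.
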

\begin{proof}
 Let $(\mathcal{U}_n)$ be a sequence of $\gamma$-covers of $X$. Without loss of generality we can assume that $\mathcal{U}_m\cap\mathcal{U}_n=\emptyset$ for $m\neq n$. Since $X$ is $\aS$, we apply Theorem~\ref{T28-4}. Thus there is a sequence $(\mathcal{V}_n)$ of pairwise disjoint finite sets such that for each $n$ $\mathcal{V}_n\subseteq\mathcal{U}_n$ and for each finite set $F\subseteq X$ there is a $n$ such that $F\subseteq\overline{\cup\mathcal{V}_n}$. Observe that $\{\cup\mathcal{V}_n : n\in\mathbb{N}\}\in\overline{\Omega}$ and hence $\{\cup\mathcal{V}_n : n\in\mathbb{N}\}\in\overline{\Lambda}$. It follows that $\cup_{n\in\mathbb{N}}\mathcal{V}_n\in\overline{\Lambda}$ and consequently $X$ satisfies $\Sf(\Gamma,\overline{\Lambda})$.

For the next part, first we pick a large cover $\mathcal{U}$ of $X$. We then enumerate $\mathcal{U}$ bijectively as $\{U_n : n\in\mathbb{N}\}$. Now  $(\mathcal{W}_n)$ is a sequence of $\gamma$-covers of $X$, where $\mathcal{W}_n=\{\cup_{n<j\leq m} U_j: m\in\mathbb{N}\}$ for each $n$. We assume that $\mathcal{W}_m\cap\mathcal{W}_n=\emptyset$ for $m\neq n$. Again applying Theorem~\ref{T28-4} to $(\mathcal{W}_n)$ to obtain a sequence $(\mathcal{H}_n)$ of pairwise disjoint finite sets such that for each $n$ $\mathcal{H}_n\subseteq\mathcal{W}_n$ and for each finite set $F\subseteq X$ there is a $n$ such that $F\subseteq\overline{\cup\mathcal{H}_n}$.

Now define $k_0=m_0=n_0=1$ and continue as follows.

Choose $m_1=2$. Observe that $\mathcal{H}_1\subseteq\cup_{j\leq m_1}\mathcal{H}_j$. Next choose $n_1\geq m_1$ so small in such a way that if $U_i$ is a term in the representation of an element of $\cup_{j\leq m_1}\mathcal{H}_j$, then $i<n_1$. Again choose $k_1>n_1$ so that if $j\geq k_1$, then the following conditions are satisfied.
\begin{enumerate}[wide=0pt,label={\upshape(\arabic*)},
leftmargin=*]
  \item If $U_i$ is a term in the representation of an element of $\mathcal{H}_j$, then $i\geq n_1$;
  \item $k_1$ is minimal subject to $1$ and $k_1>n_1$.
\end{enumerate}

Next choose $m_2=k_1+1$. Now take $n_2\geq m_2$ so small such that if $U_i$ is a term in the representation of an element of $\cup_{j\leq m_2}\mathcal{H}_j$, then $i<n_2$. Again choose $k_2>n_2$ so that if $j\geq k_2$, then the following conditions are satisfied.
\begin{enumerate}[wide=0pt,label={\upshape(\arabic*)},
leftmargin=*]
  \item If $U_i$ is a term in the representation of an element of $\mathcal{H}_j$, then $i\geq n_2$;
  \item $k_2$ is minimal subject to $1$ and $k_2>n_2$.
\end{enumerate}

For the general case, choose $m_{j+1}=k_j+1$. Next take $n_{j+1}\geq m_{j+1}$ so small such that if $U_l$ is a term in the representation of an element of $\cup_{i\leq m_{j+1}}\mathcal{H}_j$, then $l<n_{j+1}$. Again choose $k_{j+1}>n_{j+1}$ such that if $l\geq k_{j+1}$, then the following conditions are satisfied.
\begin{enumerate}[wide=0pt,label={\upshape(\arabic*)},
leftmargin=*]
  \item If $U_i$ is a term in the representation of an element of $\mathcal{H}_l$, then $i\geq n_{j+1}$;
  \item $k_{j+1}$ is minimal subject to $1$ and $k_{j+1}>n_{j+1}$.
\end{enumerate}

For each $n$ let $\mathcal{K}_n=\cup_{k_{n-1}+1\leq j\leq k_n}\mathcal{H}_j$. It is easy to observe that for each $m$ $\cup\mathcal{K}_n\subseteq\cup_{n_{m-1}\leq i\leq n_{m+1}}U_i$.

By the construction of $\mathcal{H}_i$'s we have for each finite set $F\subseteq X$ either there is a $n$ such that $F\subseteq\overline{\cup\mathcal{K}_{2n-1}}$, or there is a $n$ such that $F\subseteq\overline{\cup\mathcal{K}_{2n}}$. In the first case the partition \[\left(\{U_i : n_{2k-2}\leq i<n_{2k}\}:k\in\mathbb{N}\right)\] witnesses that $\mathcal{U}\in\overline{\mathcal{O}^{wgp}}$.

In the latter case the partition \[\left(\{U_i : n_{2k-1}\leq i<n_{2k+1}\}:k\in\mathbb{N}\right)\] witnesses that $\mathcal{U}\in\overline{\mathcal{O}^{wgp}}$. This completes the proof.
\end{proof}

By Lemma~\ref{L2}, we obtain the following.
\begin{Cor}
\label{C19}
If $X$ is $\aS$, then $X$ satisfies $\Sf(\Omega,\overline{\Lambda})$ and each large cover of $X$ is a member of $\overline{\mathcal{O}^{wgp}}$.
\end{Cor}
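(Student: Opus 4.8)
The plan is to combine Theorem~\ref{T29} with Lemma~\ref{L2}, which together yield the conclusion immediately. First I would invoke Theorem~\ref{T29}: since $X$ is $\aS$, it already satisfies the selection principle $\Sf(\Gamma,\overline{\Lambda})$ and, moreover, every large cover of $X$ belongs to $\overline{\mathcal{O}^{wgp}}$. The second half of the assertion---that each large cover of $X$ is a member of $\overline{\mathcal{O}^{wgp}}$---is thus delivered verbatim by Theorem~\ref{T29} and requires no further manipulation.

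For the first half, I would apply Lemma~\ref{L2}, which records the equality $\Sf(\Gamma,\overline{\Lambda})=\Sf(\Omega,\overline{\Lambda})$. Substituting this equality into the conclusion of Theorem~\ref{T29} upgrades the $\gamma$-cover version of the principle to the $\omega$-cover version, so that $X$ satisfies $\Sf(\Omega,\overline{\Lambda})$. Assembling the two halves completes the argument.

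There is essentially no obstacle at this stage: the corollary is a direct consequence of the two named results, the only genuine content being the passage from $\Gamma$ to $\Omega$ in the first coordinate of the selection principle. All the substantive work lives in the proof of Theorem~\ref{T29} (the construction of the weakly groupable partition witnessing membership in $\overline{\mathcal{O}^{wgp}}$) and in the proof of Lemma~\ref{L2} (replacing each $\omega$-cover by the $\gamma$-cover formed from its finite unions). The present step is therefore purely a matter of citing and combining these ingredients.
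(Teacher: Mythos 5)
Your proposal is correct and matches the paper's own derivation exactly: the corollary is stated immediately after Theorem~\ref{T29} with the remark ``By Lemma~\ref{L2}, we obtain the following,'' i.e.\ the second half is quoted verbatim from Theorem~\ref{T29} and the first half follows from the identity $\Sf(\Gamma,\overline{\Lambda})=\Sf(\Omega,\overline{\Lambda})$. Nothing further is needed.
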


For other two cases we obtain the following.
\begin{Th}
\label{T48}
For a space $X$ the following assertions are equivalent.
\begin{enumerate}[wide=0pt,label={\upshape(\arabic*)},leftmargin=*]
  \item $X$ is $\wS$.
  \item $X$ satisfies $\Uf(\Gamma,\Omega_D)$.
  \item $X$ satisfies $\Sf(\Gamma,{\Lambda^{wgp}}_D)$.
  \item For each sequence $(\mathcal{U}_n)$ of $\gamma$-covers of $X$ there is a sequence $(\mathcal{V}_n)$ of pairwise disjoint finite sets such that for each $n$ $\mathcal{V}_n\subseteq\mathcal{U}_n$ and for each finite collection $\mathcal{F}$ of nonempty open subsets of $X$ there is a $n$ such that $U\cap(\cup\mathcal{V}_n)\neq\emptyset$ for all $U\in\mathcal{F}$.
  \item $X$ satisfies $\Sf(\Gamma,{\mathcal{O}^{wgp}}_D)$.
  \item $X$ satisfies $\Uf(\mathcal{O},{\mathcal{O}^{wgp}}_D)$.
\end{enumerate}
\end{Th}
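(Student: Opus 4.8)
The plan is to establish the same cycle of implications used for Theorem~\ref{T28}, carrying out every step verbatim but with the closure covers replaced by their dense counterparts: throughout, $\overline{\Omega}$, $\overline{\Lambda^{wgp}}$ and $\overline{\mathcal{O}^{wgp}}$ are replaced by $\Omega_D$, ${\Lambda^{wgp}}_D$ and ${\mathcal{O}^{wgp}}_D$, while each requirement ``$F\subseteq\overline{\cup\mathcal{V}_n}$'' for a finite set $F$ becomes the requirement that every finite family $\mathcal{F}$ of nonempty open sets satisfy $U\cap(\cup\mathcal{V}_n)\neq\emptyset$ for all $U\in\mathcal{F}$. The equivalence $(1)\Leftrightarrow(6)$ is exactly Theorem~\ref{T46} and is simply quoted.

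Two of the remaining implications are free. Since $\Gamma\subseteq\mathcal{O}$, the principle $\Uf(\mathcal{O},\Omega_D)$ restricts to sequences of $\gamma$-covers, giving $(1)\Rightarrow(2)$; and forgetting the pairwise-disjointness clause in $(4)$ and reading off the resulting family $\{\cup\mathcal{V}_n:n\in\mathbb{N}\}$ shows $(4)\Rightarrow(2)$ (taking singletons $\mathcal{F}=\{V\}$ recovers density of $\cup_n\cup\mathcal{V}_n$). Together with the arrows proved below, these make the scheme strongly connected: $1\Leftrightarrow6$, $1\Rightarrow2\Rightarrow1$, $2\Rightarrow3\Rightarrow4\Rightarrow2$, $6\Rightarrow5\Rightarrow4$, so that $(1)$ both reaches and is reached from every other statement.

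For the routine implications I would copy the arguments of Theorem~\ref{T28}. In $(2)\Rightarrow(1)$, from open covers $\mathcal{U}_n=\{U_m^{(n)}:m\in\mathbb{N}\}$ I form the $\gamma$-covers $\mathcal{W}_n=\{\cup_{i\le m}U_i^{(n)}:m\in\mathbb{N}\}$, apply $\Uf(\Gamma,\Omega_D)$ to pick $V_{m_n}^{(n)}$ with $\{V_{m_n}^{(n)}:n\in\mathbb{N}\}\in\Omega_D$, and set $\mathcal{V}_n=\{U_i^{(n)}:i\le m_n\}$, so that $\cup\mathcal{V}_n=V_{m_n}^{(n)}$. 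In $(2)\Rightarrow(3)$, after thinning the $\gamma$-covers to be pairwise disjoint, $\Uf(\Gamma,\Omega_D)$ gives finite $\mathcal{V}_n\subseteq\mathcal{U}_n$ with $\{\cup\mathcal{V}_n:n\in\mathbb{N}\}\in\Omega_D\subseteq\Lambda_D$; hence $\cup_n\mathcal{V}_n\in\Lambda_D$, and the disjoint partition $(\mathcal{V}_n)$ itself witnesses $\cup_n\mathcal{V}_n\in{\mathcal{O}^{wgp}}_D$, whence $\cup_n\mathcal{V}_n\in{\Lambda^{wgp}}_D$. The implication $(6)\Rightarrow(5)$ repeats the regrouping of Theorem~\ref{T28}: from the family delivered by $\Uf(\mathcal{O},{\mathcal{O}^{wgp}}_D)$ I pass to a pairwise disjoint partition $(\mathcal{F}_n)$ with $\cup\mathcal{F}_n=\cup\mathcal{H}_n$, yielding $\Sf(\Gamma,{\mathcal{O}^{wgp}}_D)$.

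The main obstacle is the combinatorial implication $(3)\Rightarrow(4)$ together with its twin $(5)\Rightarrow(4)$, which I model on \cite[Theorem~2]{cocVIII} as in Theorem~\ref{T28}. Starting from $\Sf(\Gamma,{\Lambda^{wgp}}_D)$ applied to pairwise disjoint $\gamma$-covers, one writes $\cup_n\mathcal{H}_n$ as $\cup_n\mathcal{K}_n$ with the $\mathcal{K}_n$ finite and pairwise disjoint and such that every finite family $\mathcal{F}$ of nonempty open sets meets $\cup\mathcal{K}_n$ for some $n$. Fixing in advance a representation of each relevant element as an intersection, and choosing $i_1<i_2<\cdots$ with $\mathcal{H}_k\cap\mathcal{K}_j=\emptyset$ for $j>i_k$, I let $\mathcal{V}_k$ collect the members of $\mathcal{U}_k$ occurring in the representations of the elements of $\mathcal{K}_j$, $j\le i_k$; this yields pairwise disjoint finite $\mathcal{V}_k\subseteq\mathcal{U}_k$ with $\cup\mathcal{K}_{n_0}\subseteq\cup\mathcal{V}_{k_0}$ at the appropriate index. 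The sole departure from the $\aS$ argument is the closing step: since any finite family of nonempty open sets meeting $\cup\mathcal{K}_{n_0}$ automatically meets the larger set $\cup\mathcal{V}_{k_0}$, the dense-meeting condition of $(4)$ follows. Verifying that pairwise disjointness survives the regrouping, rather than any new idea, is where the care lies.
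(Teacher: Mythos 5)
Your proposal is correct and is essentially the paper's own route: the paper states Theorem~\ref{T48} without a separate proof, intending exactly the adaptation you describe of the cycle of implications in Theorem~\ref{T28} (with $(1)\Leftrightarrow(6)$ supplied by Theorem~\ref{T46}), replacing closures by the dense-meeting condition throughout. Your explicit inclusion of the trivial arrows $(1)\Rightarrow(2)$ and $(4)\Rightarrow(2)$, and the observation that the final step only needs $\cup\mathcal{K}_{n_0}\subseteq\cup\mathcal{V}_{k_0}$ to transfer the meeting condition, are exactly the right checks.
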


\begin{Th}
\label{T49}
If $X$ is $\wS$, then $X$ satisfies $\Sf(\Gamma,\Lambda_D)$ and each large cover of $X$ is a member of ${\mathcal{O}^{wgp}}_D$.
\end{Th}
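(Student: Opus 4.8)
The plan is to run the argument of Theorem~\ref{T29} essentially verbatim, replacing the appeal to the $\aS$ selection principle (Theorem~\ref{T28-4}) by its weak counterpart, item~(4) of Theorem~\ref{T48}, and replacing the closure conditions ``$F\subseteq\overline{\cup(\cdot)}$'' throughout by the open-set intersection conditions ``$U\cap(\cup(\cdot))\neq\emptyset$ for all $U\in\mathcal{F}$''. The two assertions, $\Sf(\Gamma,\Lambda_D)$ and ``every large cover of $X$ lies in ${\mathcal{O}^{wgp}}_D$'', are then treated separately, matching the two halves of the proof of Theorem~\ref{T29}.

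For the first assertion I would begin with a sequence $(\mathcal{U}_n)$ of $\gamma$-covers of $X$, assumed pairwise disjoint without loss of generality, and apply item~(4) of Theorem~\ref{T48} to obtain a sequence $(\mathcal{V}_n)$ of pairwise disjoint finite sets with $\mathcal{V}_n\subseteq\mathcal{U}_n$ such that, for every finite collection $\mathcal{F}$ of nonempty open sets, some $n$ satisfies $U\cap(\cup\mathcal{V}_n)\neq\emptyset$ for all $U\in\mathcal{F}$. This says precisely that $\{\cup\mathcal{V}_n : n\in\mathbb{N}\}\in\Omega_D$, density following by applying the condition to singletons $\mathcal{F}=\{U\}$. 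Since $\Omega_D\subseteq\Lambda_D$, for each nonempty open $U$ the set $\{n : U\cap(\cup\mathcal{V}_n)\neq\emptyset\}$ is infinite; the disjointness and finiteness of the $\mathcal{V}_n$ then upgrade this to the infinitude of $\{V\in\cup_{n}\mathcal{V}_n : U\cap V\neq\emptyset\}$, so that $\cup_{n\in\mathbb{N}}\mathcal{V}_n\in\Lambda_D$. Hence $X$ satisfies $\Sf(\Gamma,\Lambda_D)$ (and, via Lemma~\ref{L6}, also $\Sf(\Omega,\Lambda_D)$, in parallel with Corollary~\ref{C19}).

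For the second assertion I would fix a large cover $\mathcal{U}$, enumerate it bijectively as $\{U_n : n\in\mathbb{N}\}$, and form the $\gamma$-covers $\mathcal{W}_n=\{\cup_{n<j\leq m}U_j : m\in\mathbb{N}\}$, taken pairwise disjoint; these are $\gamma$-covers precisely because $\mathcal{U}$ is large. Applying item~(4) of Theorem~\ref{T48} to $(\mathcal{W}_n)$ yields pairwise disjoint finite sets $\mathcal{H}_n\subseteq\mathcal{W}_n$ with the $\Omega_D$-type selection property. From here I would reproduce the index-selection of Theorem~\ref{T29}: choose interlocking integers $k_j<m_{j+1}\le n_{j+1}<k_{j+1}$ controlling which generators $U_i$ occur in the representations of the members of each $\mathcal{H}_j$, and set $\mathcal{K}_n=\cup_{k_{n-1}+1\leq j\leq k_n}\mathcal{H}_j$, so that $\cup\mathcal{K}_n\subseteq\cup_{n_{m-1}\leq i\leq n_{m+1}}U_i$ for the appropriate $m$. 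Given a finite collection $\mathcal{F}$ of nonempty open sets, the selection property supplies an $n_0$ with $V\cap(\cup\mathcal{H}_{n_0})\neq\emptyset$ for all $V\in\mathcal{F}$; through the inclusions $\cup\mathcal{H}_{n_0}\subseteq\cup\mathcal{K}_t\subseteq\cup\{U_i : \dots\}$ this intersection condition transfers to the enclosing block of $U_i$'s, so that, exactly as in Theorem~\ref{T29}, one of the two interleaved partitions $(\{U_i : n_{2k-2}\leq i<n_{2k}\})_k$ and $(\{U_i : n_{2k-1}\leq i<n_{2k+1}\})_k$ witnesses $\mathcal{U}\in{\mathcal{O}^{wgp}}_D$ (density of $\cup\mathcal{U}$ being automatic, since a large cover covers $X$).

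The main obstacle lies entirely in the second assertion: the bookkeeping of the indices $k_j,m_j,n_j$ so that each block $\mathcal{K}_n$ draws its generators from a controlled consecutive window $[n_{m-1},n_{m+1}]$, together with the parity/interleaving device used to produce a single partition out of the per-$\mathcal{F}$ dichotomy. Once these ranges are fixed, the intersection condition propagates up the chain $\cup\mathcal{H}_{n_0}\subseteq\cup\mathcal{K}_t\subseteq\cup(\text{block})$ purely by monotonicity of ``meeting an open set'', so no idea beyond the template of Theorem~\ref{T29} is needed; the only genuinely new bookkeeping is the systematic substitution of the $\Omega_D$-condition for the $\overline{\Omega}$-condition.
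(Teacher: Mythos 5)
Your proposal is correct and follows exactly the route the paper intends: Theorem~\ref{T49} is stated without proof precisely because it is the word-for-word transcription of the proof of Theorem~\ref{T29}, with Theorem~\ref{T28-4} replaced by item (4) of Theorem~\ref{T48} and every condition of the form $F\subseteq\overline{\cup(\cdot)}$ replaced by ``each member of $\mathcal{F}$ meets $\cup(\cdot)$'', which is what you do. The only point that deserves a word in a written-up version --- that the per-collection dichotomy between the two interleaved partitions yields a \emph{single} witnessing partition --- is resolved by the directedness of finite collections of open sets under union, exactly as the analogous step is (tacitly) resolved in the paper's proof of Theorem~\ref{T29}.
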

\begin{Cor}
\label{C30}
If $X$ is $\wS$, then $X$ satisfies $\Sf(\Omega,\Lambda_D)$ and each large cover of $X$ is a member of ${\mathcal{O}^{wgp}}_D$.
\end{Cor}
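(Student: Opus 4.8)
The plan is to obtain this corollary as an immediate consequence of Theorem~\ref{T49} together with Lemma~\ref{L6}, in exact parallel to the way Corollary~\ref{C19} is deduced from Theorem~\ref{T29} via Lemma~\ref{L2}. So the approach is purely to reassemble two already-proved facts rather than to run any new combinatorial argument.

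First I would invoke Theorem~\ref{T49}: since $X$ is $\wS$, that theorem already delivers \emph{both} that $X$ satisfies $\Sf(\Gamma,\Lambda_D)$ and that every large cover of $X$ belongs to ${\mathcal{O}^{wgp}}_D$. The second of these conclusions is word-for-word the second assertion of the present corollary, so nothing further is needed on that front.

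It then remains only to upgrade the selection principle from $\gamma$-covers to $\omega$-covers, that is, to pass from $\Sf(\Gamma,\Lambda_D)$ to $\Sf(\Omega,\Lambda_D)$. This is exactly the content of Lemma~\ref{L6}, which records the equality $\Sf(\Gamma,\Lambda_D)=\Sf(\Omega,\Lambda_D)$ as properties of a space. Applying this equality to the $\Sf(\Gamma,\Lambda_D)$ conclusion supplied by Theorem~\ref{T49} immediately yields $\Sf(\Omega,\Lambda_D)$, which completes the derivation.

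Since the whole substance resides in Theorem~\ref{T49} and Lemma~\ref{L6}, there is no genuine obstacle here; the only point worth checking is that Lemma~\ref{L6} is stated for precisely the target family $\Lambda_D$ appearing in the corollary, which it is. For context, recall that the proof of Lemma~\ref{L6} proceeds just as the proof of Lemma~\ref{L2}, namely by replacing each $\omega$-cover $\mathcal{U}_n$ with the associated $\gamma$-cover of finite unions $V^{(n)}_m=\cup_{1\le i\le m}U^{(n)}_i$, applying the $\gamma$-version of the principle, and transferring the chosen finite subfamilies back; thus all the real work has already been discharged there, and the present corollary is merely its packaging for the $\wS$ setting.
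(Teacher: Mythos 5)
Your derivation is correct and matches the paper's intended route exactly: Corollary~\ref{C30} is obtained from Theorem~\ref{T49} by upgrading $\Sf(\Gamma,\Lambda_D)$ to $\Sf(\Omega,\Lambda_D)$ via Lemma~\ref{L6}, precisely mirroring how Corollary~\ref{C19} follows from Theorem~\ref{T29} and Lemma~\ref{L2}. Nothing further is needed.
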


\begin{Th}
\label{T50}
For a space $X$ the following assertions are equivalent.
\begin{enumerate}[wide=0pt,label={\upshape(\arabic*)},leftmargin=*]
  \item $X$ is $\wSk$.
  \item $X$ satisfies $\Uf(\Gamma,\Omega^D)$.
  \item $X$ satisfies $\Sf(\Gamma,{\Lambda^{wgp}}^D)$.
  \item For each sequence $(\mathcal{U}_n)$ of $\gamma$-covers of $X$ there is a dense set $Y\subseteq X$ and a sequence $(\mathcal{V}_n)$ of pairwise disjoint finite sets such that for each $n$ $\mathcal{V}_n\subseteq\mathcal{U}_n$ and each finite set $F\subseteq Y$ is contained in $\cup\mathcal{V}_n$ for some $n$.
  \item $X$ satisfies $\Sf(\Gamma,{\mathcal{O}^{wgp}}^D)$.
  \item $X$ satisfies $\Uf(\mathcal{O},{\mathcal{O}^{wgp}}^D)$.
\end{enumerate}
\end{Th}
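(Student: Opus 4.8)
The plan is to transcribe the proof of Theorem~\ref{T28} (the $\aS$-analogue) essentially verbatim, replacing the closure-type cover classes $\overline{\Omega},\overline{\Lambda},\overline{\Lambda^{wgp}},\overline{\mathcal{O}^{wgp}}$ by their dense-witnessed counterparts $\Omega^D,\Lambda^D,{\Lambda^{wgp}}^D,{\mathcal{O}^{wgp}}^D$, and by carrying a witnessing dense set $Y$ through every step. The only genuinely new bookkeeping is that in each of the ``$D$''-classes a finite set is required to lie \emph{inside} $\cup\mathcal{V}_n$ rather than in its closure, but only for finite subsets of some fixed dense $Y$; so at each stage I must check that the dense witness produced by the previous step survives the re-indexing unchanged.

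The backbone equivalence $(1)\Leftrightarrow(6)$ is precisely Theorem~\ref{T47}, so nothing has to be reproved there, and $(1)\Rightarrow(2)$ is immediate from $\Gamma\subseteq\mathcal{O}$. For $(2)\Rightarrow(1)$ I would, given open covers $(\mathcal{U}_n)$ enumerated as $\{U^{(n)}_m:m\in\mathbb{N}\}$, pass to the $\gamma$-covers $\mathcal{W}_n=\{V^{(n)}_m:m\in\mathbb{N}\}$ with $V^{(n)}_m=\bigcup_{i\le m}U^{(n)}_i$, apply $\Uf(\Gamma,\Omega^D)$ to obtain $V^{(n)}_{m_n}$ with $\{V^{(n)}_{m_n}:n\in\mathbb{N}\}\in\Omega^D$, and set $\mathcal{V}_n=\{U^{(n)}_i:i\le m_n\}$ so that $\cup\mathcal{V}_n=V^{(n)}_{m_n}$; the dense witness transfers directly and yields $\Uf(\mathcal{O},\Omega^D)$. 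For $(2)\Rightarrow(3)$ I would first arrange the $\gamma$-covers to be pairwise disjoint, apply $\Uf(\Gamma,\Omega^D)$ to get finite $\mathcal{V}_n\subseteq\mathcal{U}_n$ with $\{\cup\mathcal{V}_n:n\in\mathbb{N}\}\in\Omega^D\subseteq\Lambda^D$; disjointness lifts this to $\cup_n\mathcal{V}_n\in\Lambda^D$ (each point of the witness $Y$ lying in infinitely many distinct members), while the partition $(\mathcal{V}_n)$ itself witnesses membership in ${\mathcal{O}^{wgp}}^D$ with the same $Y$, giving $\cup_n\mathcal{V}_n\in{\Lambda^{wgp}}^D$. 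The implication $(6)\Rightarrow(5)$ runs exactly as its counterpart in Theorem~\ref{T28}: apply $\Uf(\mathcal{O},{\mathcal{O}^{wgp}}^D)$ to pairwise disjoint $\gamma$-covers and repackage the resulting partition so that $\cup_n\mathcal{V}_n\in{\mathcal{O}^{wgp}}^D$, the dense set $Y$ being inherited from the ${\mathcal{O}^{wgp}}^D$-membership.

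The substantial steps are $(3)\Rightarrow(4)$ and $(5)\Rightarrow(4)$, which follow the re-indexing argument of \cite[Theorem 2]{cocVIII} already adapted in the proof of Theorem~\ref{T28}. From $\cup_n\mathcal{H}_n\in{\Lambda^{wgp}}^D$ (respectively ${\mathcal{O}^{wgp}}^D$) I would first use the associated dense set $Y$ to obtain a pairwise-disjoint repartition $\cup_n\mathcal{K}_n$ with each finite $F\subseteq Y$ contained in $\cup\mathcal{K}_n$ for some $n$; then, passing to the diagonal-intersection $\gamma$-covers $\mathcal{W}_n=\{U^{(1)}_m\cap\cdots\cap U^{(n)}_m:m\in\mathbb{N}\}\setminus\{\emptyset\}$ and choosing a fast-growing $i_1<i_2<\cdots$ so that $\mathcal{H}_k\cap\mathcal{K}_j=\emptyset$ for $j>i_k$, I would read off from each block a finite $\mathcal{V}_n\subseteq\mathcal{U}_n$ of the \emph{original} covers with $\cup\mathcal{K}_{n_0}\subseteq\cup\mathcal{V}_{k_0}$. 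The main obstacle here is the dense-witness bookkeeping: since the re-indexing only regroups the very same sets, the fixed dense $Y$ remains a legitimate witness, and the capture condition ``$F\subseteq\overline{\cup\mathcal{V}_n}$'' of the $\aS$ case is simply replaced by ``$F\subseteq\cup\mathcal{V}_n$ for finite $F\subseteq Y$'', which is exactly what (4) demands.

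Finally I would close the cycle with $(4)\Rightarrow(2)$: given the pairwise-disjoint selection of (4) with dense $Y$, the family $\{\cup\mathcal{V}_n:n\in\mathbb{N}\}$ is infinite, its union contains $Y$ and is therefore dense, and for any finite collection of nonempty open sets $O_1,\dots,O_k$ one picks $y_i\in O_i\cap Y$ and applies the capture property to $\{y_1,\dots,y_k\}$ to find a single $\cup\mathcal{V}_n$ meeting every $O_i$; together with the capture property on finite subsets of $Y$ this shows $\{\cup\mathcal{V}_n:n\in\mathbb{N}\}\in\Omega^D$, i.e. $\Uf(\Gamma,\Omega^D)$. Combined with $(1)\Leftrightarrow(6)$ (Theorem~\ref{T47}), $(1)\Rightarrow(2)$, and the chain $(2)\Rightarrow(1)$, $(2)\Rightarrow(3)\Rightarrow(4)$, $(6)\Rightarrow(5)\Rightarrow(4)$, this renders all six statements equivalent. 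I expect $(3)\Rightarrow(4)$ and $(5)\Rightarrow(4)$ to be the delicate parts, precisely because of the need to keep a single dense set valid across the block-regrouping.
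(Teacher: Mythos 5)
Your proposal is correct and is exactly the route the paper intends: Theorem~\ref{T50} is stated without proof as the ${}^D$-witnessed analogue of Theorem~\ref{T28}, and your transcription — reusing Theorem~\ref{T47} for $(1)\Leftrightarrow(6)$, the increasing-union trick for $(2)\Rightarrow(1)$, and the block-regrouping of \cite[Theorem 2]{cocVIII} for $(3)\Rightarrow(4)$ and $(5)\Rightarrow(4)$ while keeping the single dense set $Y$ fixed through the re-indexing — is precisely the intended adaptation. Your observation that $\cup\mathcal{K}_{n_0}\subseteq\cup\mathcal{V}_{k_0}$ holds at the level of the sets themselves (so the closure-free capture condition for finite $F\subseteq Y$ survives) is the only point where the ${}^D$ case differs from the $\aS$ case, and you handle it correctly.
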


\begin{Th}
\label{T51}
If $X$ is $\wSk$, then $X$ satisfies $\Sf(\Gamma,\Lambda^D)$ and each large cover of $X$ is a member of ${\mathcal{O}^{wgp}}^D$.
\end{Th}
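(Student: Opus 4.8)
The plan is to split the statement into its two conclusions and treat them separately, following the template of Theorems~\ref{T29} and~\ref{T49} (which themselves adapt \cite[Theorem~2]{cocVIII}), but systematically replacing every closure operation by the dense witnessing set supplied by the $\wSk$ property.

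The first conclusion, $\Sf(\Gamma,\Lambda^D)$, is essentially immediate. By Theorem~\ref{T50} the $\wSk$ property is equivalent to $\Sf(\Gamma,{\Lambda^{wgp}}^D)$, and since ${\Lambda^{wgp}}^D\subseteq\Lambda^D$ by definition, any selection witnessing $\Sf(\Gamma,{\Lambda^{wgp}}^D)$ already witnesses $\Sf(\Gamma,\Lambda^D)$. Alternatively, one fixes a sequence $(\mathcal{U}_n)$ of pairwise disjoint $\gamma$-covers, applies the pairwise-disjoint formulation Theorem~\ref{T50}\,(4) to obtain a dense set $Y$ and pairwise disjoint finite $\mathcal{V}_n\subseteq\mathcal{U}_n$ with each finite $F\subseteq Y$ contained in $\cup\mathcal{V}_n$ for some $n$; then $\{\cup\mathcal{V}_n : n\in\mathbb{N}\}\in\Omega^D\subseteq\Lambda^D$ (the $\Omega^D$-clauses following from density of $Y$ by choosing points of $Y$ inside prescribed open sets), and disjointness upgrades this to $\cup_{n}\mathcal{V}_n\in\Lambda^D$ with the same $Y$.

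The substantive part is the second conclusion. First I would fix a large cover $\mathcal{U}$, enumerate it bijectively as $\{U_n : n\in\mathbb{N}\}$, and form the $\gamma$-covers $\mathcal{W}_n=\{\cup_{n<j\leq m}U_j : m\in\mathbb{N}\}$, which we may take pairwise disjoint. Applying Theorem~\ref{T50}\,(4) to $(\mathcal{W}_n)$ yields a dense $Y\subseteq X$ and pairwise disjoint finite $\mathcal{H}_n\subseteq\mathcal{W}_n$ such that every finite $F\subseteq Y$ lies in $\cup\mathcal{H}_n$ for some $n$. I would then run verbatim the index bookkeeping of Theorem~\ref{T29} (choosing indices $m_j\leq n_j<k_j$ recursively and setting $\mathcal{K}_n=\cup_{k_{n-1}+1\leq j\leq k_n}\mathcal{H}_j$) so that each $\cup\mathcal{K}_m$ only involves those $U_i$ with $n_{m-1}\leq i\leq n_{m+1}$; the only change is that the clause $F\subseteq\overline{\cup\mathcal{K}_m}$ is everywhere replaced by $F\subseteq\cup\mathcal{K}_m$ for $F\subseteq Y$.

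This produces two candidate partitions of $\mathcal{U}$, the even one $P_k=\{U_i : n_{2k-2}\leq i<n_{2k}\}$ and the odd one $P'_k=\{U_i : n_{2k-1}\leq i<n_{2k+1}\}$, with $\cup\mathcal{K}_{2k-1}\subseteq\cup P_k$ and $\cup\mathcal{K}_{2k}\subseteq\cup P'_k$ up to the single endpoint term absorbed into the next block. The main obstacle — the reason the argument needs slightly more than a quotation of Theorem~\ref{T29} — is to see that one single partition serves all finite $F\subseteq Y$ simultaneously. I would settle this by a union argument: if the even partition fails some $F\subseteq Y$, then no odd-indexed $\mathcal{K}$ covers $F$, so every witnessing index for $F$ is even; symmetrically, if the odd partition fails some $F'\subseteq Y$, every witnessing index for $F'$ is odd. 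Then $F\cup F'\subseteq Y$ would admit no witnessing index at all, contradicting the choice of $(\mathcal{H}_n)$. Hence the even partition works for every finite $F\subseteq Y$, or the odd one does; the successful partition, together with the dense set $Y$, witnesses $\mathcal{U}\in{\mathcal{O}^{wgp}}^D$ (the auxiliary ${\mathcal{O}^{wgp}}_D$-clause again following by replacing each prescribed open set by a point of $Y$ it contains). I expect this global dichotomy, rather than the routine bookkeeping, to be where genuine care is required.
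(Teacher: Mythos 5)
Your proposal is correct and follows the same route the paper intends: Theorem~\ref{T51} is stated in the paper without proof, being left as the $\Lambda^D$/dense-set adaptation of the argument written out for Theorem~\ref{T29}, and that is exactly what you carry out (the first conclusion via ${\Lambda^{wgp}}^D\subseteq\Lambda^D$ and Theorem~\ref{T50}, the second via the large-cover bookkeeping with closures replaced by membership for finite subsets of the dense witness $Y$). Your union argument showing that a single parity of partition serves all finite $F\subseteq Y$ simultaneously is a step the paper's model proof of Theorem~\ref{T29} passes over silently, and you handle it correctly.
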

\begin{Cor}
\label{C31}
If $X$ is $\wSk$, then $X$ satisfies $\Sf(\Omega,\Lambda^D)$ and each large cover of $X$ is a member of ${\mathcal{O}^{wgp}}^D$.
\end{Cor}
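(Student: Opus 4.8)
The plan is to derive this directly from Theorem~\ref{T51} together with Lemma~\ref{L8}, exactly in the manner that Corollary~\ref{C19} follows from Theorem~\ref{T29} and Lemma~\ref{L2}, and that Corollary~\ref{C30} follows from Theorem~\ref{T49} and Lemma~\ref{L6}. First I would apply Theorem~\ref{T51}: since $X$ is $\wSk$, that theorem yields at once that $X$ satisfies $\Sf(\Gamma,\Lambda^D)$ and that every large cover of $X$ is a member of ${\mathcal{O}^{wgp}}^D$. The second half of the asserted conclusion---the statement about large covers---is then already in hand verbatim, so no further work is needed for it.

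It therefore remains only to upgrade $\Sf(\Gamma,\Lambda^D)$ to $\Sf(\Omega,\Lambda^D)$. For this I would invoke Lemma~\ref{L8}, which records the equality $\Sf(\Gamma,\Lambda^D)=\Sf(\Omega,\Lambda^D)$. Since $X$ has been shown to satisfy the left-hand property, it satisfies the right-hand one as well, giving $\Sf(\Omega,\Lambda^D)$ and completing the argument.

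I do not expect any genuine obstacle: the entire content of the corollary is already packaged inside Theorem~\ref{T51} and Lemma~\ref{L8}, and the reduction is purely formal. The only point meriting a moment's attention is that, among the basic containments, $\Gamma\subseteq\Omega\subseteq\Lambda$ makes the passage from $\gamma$-covers to $\omega$-covers the substantive direction; but this is precisely what Lemma~\ref{L8} provides, its proof paralleling that of Lemma~\ref{L2} (enumerate each $\omega$-cover bijectively and replace its members by their initial finite unions to manufacture the required $\gamma$-covers). Hence the corollary follows immediately.
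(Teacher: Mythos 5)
Your proposal is correct and matches the paper's intended argument exactly: Theorem~\ref{T51} supplies both $\Sf(\Gamma,\Lambda^D)$ and the large-cover claim, and Lemma~\ref{L8} upgrades the former to $\Sf(\Omega,\Lambda^D)$, just as Corollary~\ref{C19} is derived from Theorem~\ref{T29} via Lemma~\ref{L2}. Nothing further is needed.
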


\section{Open Problems}
We are unable to answer the following problems during preparation of this paper.
\begin{Prob}
\label{Q5}
Give an example of a $\aS$ and a $\wS$ space which is not $\wSk$.
\end{Prob}

\begin{Prob}
\label{Q1}
If $X$ is a $\wL$ space with cardinality less than $\mathfrak{d}$, then is it a $\wSk$ (or, a $\wS$, $\wM$) space?
\end{Prob}

\begin{Prob}
\label{Q9}
Is every productively $\wL$ space a $\wS$ (or, a $\wM$) space?
\end{Prob}

\begin{Prob}
\label{Q10}
Is every productively $\aL$ space a $\aS$ (or, a $\aM$) space?
\end{Prob}

The following problems in the case of Alster property still remain open.
\begin{Prob}[\!{\cite[Problem 3.8]{MTCP}}]
\label{Q2}
Does there exist an almost Alster space which is not Alster?
\end{Prob}

\begin{Prob}
\label{Q6}
Find conditions under which almost Alster and Alster spaces are equivalent.
\end{Prob}

\begin{Prob}[\!{\cite[Problem 5]{WCPSP}}]
\label{Q8}
Is every productively $\wL$ space a weakly Alster space?
\end{Prob}

\end{document}